\newcommand{\lineUp}[1]{
  \begin{scope}[shift={(#1)}]
    \draw (0, 0) -- (0, 1);   
  \end{scope}
}
\newcommand{\lineUpBendRight}[1]{
  \begin{scope}[shift={(#1)}]
    \draw (0,0) to[bend right=50] (0,1);
  \end{scope}
}
\newcommand{\lineUpBendLeft}[1]{
  \begin{scope}[shift={(#1)}]
    \draw (0,0) to[bend left=50] (0,1);
  \end{scope}
}
\newcommand{\lineUpBendLeftWArrow}[1]{
  \begin{scope}[shift={(#1)}]
    \draw[->] (0,0) to[bend left=50] (0,1);
  \end{scope}
}
\newcommand{\lineRightBendUp}[1]{
  \begin{scope}[shift={(#1)}]
    \draw (0,0) to[bend left=50] (1,0);
  \end{scope}
}
\newcommand{\lineRightBendDown}[1]{
  \begin{scope}[shift={(#1)}]
    \draw (0,0) to[bend right=50] (1,0);
  \end{scope}
}
\newcommand{\negCrossing}[1]{
  \begin{scope}[shift={(#1)}]
    \draw (0,0)  -- (0.35,0.35);
    \draw (0.65,0.65)  -- (1,1);
    \draw (1,0) -- (0,1);
  \end{scope}
}
\newcommand{\posCrossing}[1]{
  \begin{scope}[shift={(#1)}]
    \draw (0,1)  -- (0.35,0.65);
    \draw (0.65,0.35)  -- (1,0);
    \draw (0,0) -- (1,1);
  \end{scope}
}
\newcommand{\negCrossingWArrows}[1]{
  \begin{scope}[shift={(#1)}]
    \draw (0,0)  -- (0.35,0.35);
    \draw[->] (0.65,0.65)  -- (1,1);
    \draw[->] (1,0) -- (0,1);
  \end{scope}
}
\newcommand{\negCrossingWArrowsRight}[1]{
  \begin{scope}[shift={(#1)}]
    \draw (0,0)  -- (0.35,0.35);
    \draw[->] (0.65,0.65)  -- (1,1);
    \draw[<-] (1,0) -- (0,1);
  \end{scope}
}
\newcommand{\posCrossingWArrows}[1]{
  \begin{scope}[shift={(#1)}]
    \draw[<-] (0,1)  -- (0.35,0.65);
    \draw (0.65,0.35)  -- (1,0);
    \draw[->] (0,0) -- (1,1);
  \end{scope}
}
\tikzset{->-/.style={decoration={
  markings,
  mark=at position #1 with {\arrow{>}}},postaction={decorate}}}
\newtheorem{theorem}{Theorem}[section]
\newtheorem{lemma}[theorem]{Lemma}
\newtheorem{proposition}[theorem]{Proposition}
\newtheorem{conjecture}[theorem]{Conjecture}
\theoremstyle{definition}
\theoremstyle{remark}
\newtheorem{example}[theorem]{Example}
\newtheorem{remark}[theorem]{Remark}
\let\c@table\c@figure \makeatother
\newcommand{\Z}{{\mathbb {Z}}}
\newcommand{\Q}{{\mathbb {Q}}}
\newcommand{\C}{\mathcal{C}}
\newcommand{\overone}{0^{\boldsymbol{1}} 10}
\newcommand{\overx}{0^{\boldsymbol{x}} 10}
\newcommand{\bone}{\boldsymbol{1}}
\newcommand{\bx}{\boldsymbol{x}}
\newcommand{\bminus}{\boldsymbol{-}}
\newcommand{\term}[1]{\textit{#1}}
\newcommand{\bRoman}[1]{\textbf{\uppercase\expandafter{\romannumeral#1\relax}}}
\DeclareMathOperator*{\Mat}{\mathbf{Mat}}
\DeclareMathOperator*{\Cob}{\bm{\mathcal{C}\mathit{ob}}^3_{\bullet/l}}
\DeclareMathOperator*{\Kom}{\mathbf{Kom}}
\DeclareMathOperator*{\Hom}{Hom}
\DeclareMathOperator*{\Enh}{Enh}
\DeclareMathOperator*{\hdeg}{hdeg}
\title{Discrete Morse Theory for Khovanov Homology}
\author{Tuomas Kelomäki}
\begin{document}

\maketitle
\begin{abstract}

The standard methods for calculating Khovanov homology rely either on long exact/spectral sequences or on the algorithmic ``divide and conquer" approach developed by Bar-Natan. In this paper, we employ an alternative and arguably simpler tool, discrete Morse theory, which is new in the context of knot homologies.
The method is applied for 2- and 3-torus braids in Bar-Natan's dotted cobordism category, where Khovanov complexes of tangles live. This grants a recursive description of the complexes of 2- and 3-torus braids yielding an inductive result on integral Khovanov homology of links containing those braids. The result, accompanied with some computer data, advances  the recent progress on a conjecture by Przytycki and Sazdanović which claims that closures of 3-braids only have $\Z/2\Z$ torsion in their Khovanov homology.

\end{abstract}

\tableofcontents

\section{Introduction}

The knot invariant Jones polynomial \cite{JonesPolynomialOriginal} is defined by its skein relations and thus for a link diagram $L$ one can calculate the Jones polynomial $J(L)$ by repeatedly applying the relations (albeit exponentially many times). 
In the process of categorifying the Jones polynomial into Khovanov homology \cite{khovanov1999categorification}, the main skein relation gets transformed into a long exact sequence in homology. Iteratively using this long exact sequence would compel one to not only keep track of each picture but also the differentials in between them. In certain situations an approach like this can be taken directly \cite{Stosic2007}, but more often than not the standard trick is to push this information into one or more spectral sequences \cite{LeeSpectralSequence},\cite{TurnerSpectralSequenceQcoef}.

In \cite{FastKhovavnovComputations}, Bar-Natan took a theoretically simpler ``divide and conquer" approach  to keep track of this data on a computer. His \term{scanning algorithm} builds the Khovanov complex of a tangle by gluing smaller Khovanov complexes and iterating Gaussian eliminations in between. The difficulty in using Bar-Natan's algorithm as a human can be that the number of iterations is large and each time when performing a Gaussian elimination one needs to know whether a certain matrix element is an isomorphism. An algebraic version of discrete Morse theory by Sköldberg \cite{SkoldbergAlgebraicMorseTheory} (and independently by Jöllenbeck and Welker \cite{jollenbeck2009minimal}) provides an effective graph-theoretic condition ensuring that all of the needed  matrix elements are indeed  isomorphisms at every step. More importantly, it enables one to move from the initial chain complex \textit{directly} to the \term{Morse complex} without taking the steps in between. 

The main application of discrete Morse theory in this paper proves that \textit{Khovanov complexes of 3-torus braids with $k$ twists $(\sigma_1\sigma_2)^k$ have minimal, periodic and inductively defined representatives in their homotopy equivalence classes.} After conducting this work, the author noticed that Schütz had obtained a similar result for the same braids in the chronological cobordism complexes which define the odd Khovanov homology \cite{Schtz2022}. His proof used the scanning algorithm with a slight modification; complexes are glued via mapping cone construction instead of via the tensor product which Bar-Natan used for the even Khovanov homology. Since our discrete Morse theory based method does not use gluing, it can be adjusted for the odd Khovanov homology, see Remark \ref{Odd Khovanov homology remark}. A comparison of the two approaches is shown in Table \ref{Comparison of Scanning and DMT}. 
\begin{table}
    \centering
    \begin{tabular}{p{0.45\linewidth}|p{0.45\linewidth}}
  
  \textbf{Bar-Natan/Schütz scanning algorithm} & \textbf{Discrete Morse theory} \\
  \hline
  Scan through a tangle diagram, piece by piece. Whenever a crossing is scanned:
  \begin{enumerate}[label=\Roman*.]
      \item Glue the new small complex into the current one.
      \item Deloop any circles which are formed.
      \item Iterate Gaussian eliminations.
  \end{enumerate}
  & Take in the whole tangle diagram.
  \begin{enumerate}
      \item Deloop all of the circles.
      \item Find a suitable set of matrix elements.
      \item Cancel this set, all at once.
  \end{enumerate}
  \\
  
\end{tabular}
\caption{A rough comparison of Bar-Natan/Schütz scanning algorithm and discrete Morse theory. The advantage of Discrete Morse theory is that in total only 3 steps are needed and gluing is not required. The challenges in using discrete Morse theory lie in graph theory: finding the suitable set of arrows and classifying all of the paths between critical cells. }
    \label{Comparison of Scanning and DMT}
\end{table}

Compared to long exact and spectral sequences, a significant benefit of the scanning algorithm and discrete Morse theory is that we can work on smaller pieces of links, tangles and braids. While tangles and braids do not admit homology per se, examining chain homotopy type of their complexes can yield homological results about any links that contain them. Our main theorem is of this flavour. 

\begin{figure}
    \centering
\begin{tikzpicture}[scale=0.75]
    \begin{scope}[shift={(-9,0)}]
      

      \begin{scope}[shift={(1,1.5)},scale=0.4]
        
  \draw[dashed] (0.5,0) circle (2.5);

  \draw (-1, -1) rectangle (2, 1);

    \node at (0.5,0) {$\sigma_1^{m}$};

  \draw[->] (1,1) -- (1,2.46);
  \draw[->] (0,1) -- (0,2.46);
  \draw[<-] (1,-1) -- (1,-2.46);
  \draw[<-] (0,-1) -- (0,-2.46);

  \end{scope}

  \begin{scope}[shift={(-2,-2.5)},scale=0.3]
    \draw[dashed] (10, 3.5) circle (4);

    \draw (12.8, 5.8) rectangle (7.2, 1.2);

    \node at (10,3.5) {$(\sigma_1\sigma_2)^{k}$};

    \draw[<-] (9,1.2) -- (9,-0.38);
    \draw[<-] (11,1.2) -- (11,-0.38);
    
    \draw[->] (9,5.8) -- (9,7.38);
    \draw[->] (11,5.8) -- (11,7.38);

    \draw[<-] (10,1.2) -- (10,-0.5);
    \draw[->] (10,5.8) -- (10,7.5);
  \end{scope}

      \begin{scope}[shift={(-2,0)}, scale=0.4]
    
    \draw[dashed] (0,0) circle (3);

    \negCrossing{0,-2}
    \posCrossing{1,-1}
    \posCrossing{-1,-1}
    \negCrossing{0,0}

    \lineUpBendLeftWArrow{-1,-2}
    \lineRightBendDown{-1,-2}
    \lineRightBendUp{1,1}
    \lineUpBendRight{2,0}

    \lineUp{-1,0}
    \negCrossing{-1,1}

    \draw[<-] (1,-2.84) -- (1,-2);
    
    \draw[<-] (2.84,-1) -- (2,-1);

    \draw[<-] (0,2) -- (0,3);
    \draw[<-] (-1,2) -- (-1,2.84);

    \end{scope}

    \draw[<-, looseness=1, out=270, in=270] (0.7,-2.6) to (-0.5,-2);
    \draw[-, looseness=1, out=90, in=0] (-0.5,-2) to (-0.88,-0.405);
    
    \draw[->, looseness=1, out=90, in=270] (0.7,-0.3) to (1,0.52);

    \draw[->, looseness=1, out=90, in=270] (1,-0.28) to (1.4,0.52);

    \draw[-, looseness=1, out=270, in=180] (-1.6,-1.12) to (-1,-3);
    \draw[->, looseness=1, out=0, in=270] (-1,-3) to (1,-2.62);

    \draw[-, looseness=1.5, out=90, in=90] (1.3,-0.3) to (3,-1.5);
    \draw[->, looseness=1.5, out=270, in=270] (3,-1.5) to (1.3,-2.6);

    \draw[->, looseness=0.7, out=90, in=90] (1,2.48) to (-2,1.2);
    
    \draw[->, looseness=1.3, out=90, in=90] (1.4,2.48) to (-2.4,1.1);

  \end{scope}
  
\end{tikzpicture}

    \caption{A link diagram $L(k,m)$ representing an example function $L$ for Theorem \ref{simplified algorithm theorem for introduction}. The boxes $(\sigma_1\sigma_2)^k$ and $\sigma_1^m$ exhibit the twisting of 3 and 2 strings for $k$ and $m$ times respectively, see Figures \ref{T2 braid figure} and \ref{Braid diagram of T3} for more specific illustrations.} 
    \label{Link diagram for simplified Theorem}
\end{figure}
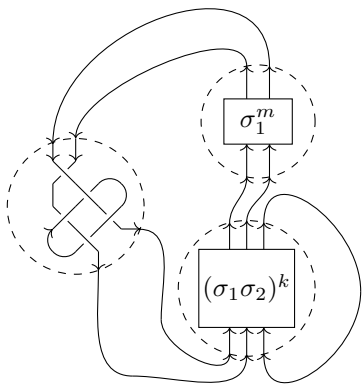

\begin{theorem} \label{simplified algorithm theorem for introduction}
    Let $L:\Z^2 \to \{\text{Links}\}$ be any function which creates a link $L(k,m)$ by uniformly gluing together a fixed tangle, a 3-torus braid with $k$ twists $(\sigma_1\sigma_2)^k$ and a 2-torus braid with $m$ twists $\sigma_1^m$; see Figure \ref{Link diagram for simplified Theorem}. Then, 
Khovanov homologies of finitely many $L(k,m)$ determine Khovanov homologies of all $L(k,m)$. Namely, there are finite index sets $K,M\subset \Z$ such that for all integers $k_1,m_1,i_1,j_1\in \Z$ there exist $k_2\in K, m_2\in M, i_2,j_2\in \Z$ for which we have an isomorphism of Khovanov homology groups
     $$
     \mathscr{H}^{i_1,j_1}L(k_1,m_1)\cong \mathscr{H}^{i_2,j_2}L(k_2,m_2).
     $$
\end{theorem}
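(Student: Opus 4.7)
The plan is to invoke the promised main application of discrete Morse theory, which gives periodic inductive descriptions of the complexes of $(\sigma_1\sigma_2)^k$ and $\sigma_1^m$ in $\Kom(\Cob)$, and then to transfer this periodicity to $L(k,m)$ using functoriality of the planar composition in Bar-Natan's category. The finite sets $K,M$ will arise as fundamental domains for the two periods, and a linear shift function will track the resulting bigraded degree.

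Concretely, I would first record the output of the main theorem: there exist integers $p,q>0$ and bigraded shifts $(\alpha,\beta),(\alpha',\beta')\in\Z^2$ such that
\[
\llbracket(\sigma_1\sigma_2)^{k+p}\rrbracket \simeq \llbracket(\sigma_1\sigma_2)^{k}\rrbracket[\alpha]\{\beta\}
\quad\text{and}\quad
\llbracket\sigma_1^{m+q}\rrbracket \simeq \llbracket\sigma_1^{m}\rrbracket[\alpha']\{\beta'\}
\]
in $\Kom(\Cob)$ for all $k,m\in\Z$. Iterating this, every $(k_1,m_1)\in\Z^2$ reduces to a unique $(k_2,m_2)\in K\times M$ with $K=\{0,\ldots,p-1\}$ and $M=\{0,\ldots,q-1\}$, and the total shift accumulated in passing from $(k_1,m_1)$ to $(k_2,m_2)$ depends linearly on $k_1-k_2$ and $m_1-m_2$.

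Next, $L(k,m)$ is assembled from the fixed tangle and the two braids via a fixed planar composition, which is a functor on Bar-Natan's dotted cobordism category preserving chain-homotopy equivalences and bigraded shifts. Hence the braid-level periodicities lift to
\[
\llbracket L(k_1,m_1)\rrbracket \simeq \llbracket L(k_2,m_2)\rrbracket[I]\{J\}
\]
in $\Kom(\Cob)$, where $I$ and $J$ are integer-valued linear functions of $k_1-k_2$ and $m_1-m_2$. Taking Khovanov homology and setting $i_2=i_1-I$, $j_2=j_1-J$ then yields the required isomorphism $\mathscr{H}^{i_1,j_1}L(k_1,m_1)\cong \mathscr{H}^{i_2,j_2}L(k_2,m_2)$.

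The hard part, flagged by the author's deleted remark ``By comparing the gradings of these compositions we obtain the main theorem'', is the grading bookkeeping. One must verify that the inductive step in discrete Morse theory produces a genuinely homogeneous shift in both the homological and quantum gradings (rather than a mere isomorphism of underlying objects), and that the two one-parameter periodicities compose coherently under the planar gluing with the fixed tangle, so that the shift function $(k,m)\mapsto(I,J)$ is truly linear and $K\times M$ is an unambiguous fundamental domain. Once this grading analysis is carried out, the rest of the argument is formal.
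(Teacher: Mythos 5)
The premise on which your entire reduction rests,
\[
\llbracket(\sigma_1\sigma_2)^{k+p}\rrbracket \simeq \llbracket(\sigma_1\sigma_2)^{k}\rrbracket[\alpha]\{\beta\},
\]
is false, and this is precisely the obstruction the paper has to work around. The discrete Morse theory of Section 4 produces a \emph{minimal} Morse complex $M\llbracket(\sigma_1\sigma_2)^k\rrbracket_{\Enh}$ supported on homological degrees $-4k,\dots,0$ with roughly two summands per degree, so its number of indecomposable summands grows linearly with $k$; a global homotopy equivalence to a shift of $M\llbracket(\sigma_1\sigma_2)^{k}\rrbracket_{\Enh}$ is therefore impossible (one can also see this from the unbounded growth of the Jones polynomial of the $(3,k)$-torus link). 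The same applies to $\sigma_1^m$. What the paper actually proves (Propositions \ref{first commutative square of T3} and \ref{second commutative square of T3}, restated as Proposition \ref{isomorphisms  of snip-complexes}) is a pair of \emph{truncated} equivalences: the high-homological-degree end of $M\llbracket(\sigma_1\sigma_2)^{k+3}\rrbracket_{\Enh}$ agrees with that of $M\llbracket(\sigma_1\sigma_2)^{k}\rrbracket_{\Enh}$ after an internal shift $\{6\}$, while the low end agrees only after a \emph{different} bigraded shift $[4]\{12\}$. So there is no single linear shift function $(I,J)$ of $(k_1-k_2,m_1-m_2)$; the shift depends on which end of the complex is being observed, and the two ends drift apart as $k$ grows.

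Because of this, the transfer through the planar composition cannot be a direct homotopy equivalence of glued complexes. Instead the paper passes to homology first and argues locally: a single bigraded group $\mathscr{H}^{i,j}L(k,m)$ is computed from only three adjacent internal/homological levels of the tensor complex $D(M\llbracket(\sigma_1\sigma_2)^k\rrbracket_{\Enh},M\llbracket\sigma_1^m\rrbracket_{\Enh},\Psi\C)$, and by an internal-grading bound (the functions $v_1,v_2$ of Theorem \ref{Main theorem for 3-input diagrams}, obtained by a convex-optimisation estimate on the possible $\{q\}$-shifts that can contribute to degree $(i,j)$) one shows that this window lies entirely in the top-end truncation or entirely in the bottom-end truncation of one of the factor complexes. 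Only then can one swap that factor for its shifted sibling without changing the homology group. The finite sets $K,M$ come out of these explicit index bounds and of Algorithm \ref{isomorphism algorithm}, not from a fundamental domain for a nonexistent global period. Your sketch identifies the right target (a fundamental domain plus a shift function) and correctly invokes Bar-Natan's planar-algebra functoriality, but it needs to be rebuilt on the truncation-plus-grading-window argument rather than on a global periodicity of the tangle complexes.
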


\noindent A consequence of the theorem is that in order to find out the set of Khovanov homology groups (including all torsion groups) of the set of links $\{L(k,m) \mid k,m\in \Z\}$, one needs to only calculate a finite amount of data which can be done on a computer.

The Jones polynomial, as an Euler characteristic of Khovanov homology, is only affected by the rank of the Khovanov homology groups. Knot theorists have therefore been asking, which torsion groups can arise in Khovanov homology of various links? Using the scanning algorithm, Mukherjee and Schütz constructed links with $\Z/3^k\Z$, $\Z/5^k\Z$ and $\Z/7^k\Z$, $k\geq 1$ torsion in their Khovanov homologies \cite{Mukherjee2021}.  On the other hand, in \cite{TorsionInKhovanovHomologyOfSemi-adequateLinks} Przytycki and Sazdanović conjectured obstructions to torsion order from the number strands on a braid closure:
\begin{conjecture}[Przytycki, Sazdanović] \label{PS conjecture}
The Khovanov homology of closures of...
    \begin{enumerate}
        \item ... $3$-braids only contain $\Z / 2\Z$ torsion.  \label{conjecture: remaining conjecture}
\item ... $4$-braids do not contain $\Z / p^r\Z$ torsion with $p\neq 2$. \label{conjeture: case 2} 
\item ... $4$-braids only contain $\Z / 2\Z$ or $\Z / 4\Z$ torsion. 
\item ... $n$-braids do not contain $\Z / p^r\Z$ for $p>n$ ($p$ prime). 
\item ... $n$-braids do not contain $\Z / p^r\Z$ for $p^r >n$. \label{conjecture: case 5} 
    \end{enumerate}
\end{conjecture}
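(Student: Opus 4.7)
The plan is to tackle the conjecture part by part, focusing on part (1) (the 3-braid case), which is the motivating target of this paper. The strategy combines Theorem \ref{simplified algorithm theorem for introduction} with Murasugi's classification of 3-braids up to conjugation: every 3-braid is conjugate to a braid of the shape $\Delta^{2k}\sigma_1^{p_1}\sigma_2^{-q_1}\cdots \sigma_1^{p_s}\sigma_2^{-q_s}$, or to one of a small number of exceptional forms, where $\Delta^2 = (\sigma_1\sigma_2)^3$ is the full twist. Since conjugation preserves the closure (hence the Khovanov homology), we may restrict attention to closures of these normal forms. Theorem \ref{simplified algorithm theorem for introduction} applied to a family $L(k,m)$, in which the fixed tangle encodes the remaining $\sigma_1, \sigma_2^{-1}$ data and one varies a full twist $(\sigma_1\sigma_2)^{3k}$ together with a single $\sigma_1^m$ block, immediately yields periodicity in the pair $(k,m)$ and so reduces the torsion question on that subfamily to a finite base-case check.

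First I would verify that the discrete Morse reduction underlying Theorem \ref{simplified algorithm theorem for introduction} preserves the ``only $\Z/2\Z$ torsion'' property: because the Gaussian eliminations take place in Bar-Natan's dotted cobordism category with essentially $\pm 1$ matrix entries (or simple dotted cobordisms), the Morse complex should not introduce odd torsion provided the base cases carry none. Second, I would run the finite computer check prescribed by Theorem \ref{simplified algorithm theorem for introduction}: compute the integral Khovanov homology of the $|K|\cdot|M|$ base representatives and confirm that only $\Z/2\Z$ torsion appears. Third, I would iterate the argument along the remaining parameters $(p_i,q_i)$ using further instances of the theorem or extensions of it, so that the global verification of part (1) becomes a finite (if large) computation.

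The main obstacle is precisely this iteration. Theorem \ref{simplified algorithm theorem for introduction} is inherently two-parameter (one 3-torus braid and one 2-torus braid), whereas Murasugi normal forms typically carry many more parameters. Extending the theorem to several $\sigma_1^{p_i}, \sigma_2^{-q_i}$ blocks forces one to redo the graph-theoretic analysis on a substantially larger cube of resolutions, where the delicate part is the uniform construction of an acyclic matching that cancels non-critical cells coherently in all parameter directions. For parts (2)--(5), one would analogously have to generalise Theorem \ref{simplified algorithm theorem for introduction} to $n$-torus braids $(\sigma_1\cdots\sigma_{n-1})^k$ and couple it with an $n$-braid conjugacy classification; the combinatorial explosion in constructing suitable matchings is the principal technical hurdle, and these parts are likely out of reach of the present method without significant new ideas.
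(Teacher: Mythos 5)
This is a \emph{conjecture}, not a theorem: the paper offers no proof of it, and neither should you expect to supply one. Your proposal has two substantive problems beyond that framing issue.

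First, you propose to ``tackle the conjecture part by part'' and in your final paragraph sketch how one might attack parts (2)--(5) by generalising to $n$-torus braids. But the paper explicitly states, with citations to \cite{SearchforTorsioninKhovanovHomology} and \cite{OnOddTorsioninEvenKhovanovHomology}, that parts (2) through (5) have been \emph{disproven} by counterexamples. No proof strategy can succeed for those parts; only part (1) remains open. Your outline should have immediately discarded them.

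Second, for part (1) you correctly identify the Murasugi classification and the reduction to the families $\Omega_0,\dots,\Omega_6$ of Table~\ref{table for defining Omega0 to Omega6}, and you correctly locate the essential obstacle: Theorem~\ref{simplified algorithm theorem for introduction} is a two-parameter statement (one $(\sigma_1\sigma_2)^k$ block and one $\sigma_1^m$ block), whereas the Murasugi normal forms in $\Omega_6$ carry arbitrarily many parameters $(p_1,q_1,\dots,p_r,q_r)$. This is precisely why the paper establishes only $\Omega_0,\dots,\Omega_5$ (Theorems~\ref{Omega0 ... Omega3 contain only 2-torsion} and \ref{Omega4 and Omega5 contain only 2 torsion}), each of which really does fit the two-parameter template after drawing the right 3-input planar arc diagram, and leaves $\Omega_6$ and hence part (1) open. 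So your plan, if executed honestly, reproduces the partial results of the paper but does not prove the conjecture --- and you say as much. One smaller inaccuracy worth flagging: you worry whether the discrete Morse reduction ``preserves the only-$\Z/2\Z$-torsion property.'' That is not the right concern. Theorem~\ref{Discrete Morse theory} produces a complex \emph{chain homotopy equivalent} to the original, so the homology (including all torsion) is identical on the nose; the theorem's actual role is to collapse an infinite family of links to a finite list of base computations, not to approximate or preserve torsion stepwise.
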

\noindent The closure of a braid indicates the link which is obtained by connecting the ends of the braid.  A $3$-braid and its braid closure are illustrated in Figure \ref{Braid closure figure} and it is noteworthy that every link can be realized as a braid closure. Parts \ref{conjeture: case 2} to \ref{conjecture: case 5} of Conjecture \ref{PS conjecture} have been disproven with counterexamples \cite{SearchforTorsioninKhovanovHomology}, \cite{OnOddTorsioninEvenKhovanovHomology} but Part \ref{conjecture: remaining conjecture} still remains open. 

\begin{figure}[h]
  \centering

\begin{tikzpicture} [scale=0.5]

    \pgfmathsetmacro{\sideToCircle}{0.358}

    \draw[dashed]  (1, 3) circle (3.5);

    \negCrossing{0,0}
    \lineUp{2,0}
    \negCrossing{1,1}
    \lineUp{0,1}
    \posCrossing{1,2}
    \lineUp{0,2}
    \negCrossing{0,3}
    \lineUp{2,3}
    \negCrossing{1,4}
    \lineUp{0,4}
    \posCrossing{0,5}
    \lineUp{2,5}

    \draw[->] (0,6) -- (0,6+\sideToCircle);
    \draw[->] (2,6) -- (2,6+\sideToCircle);
    \draw[->] (0,-\sideToCircle) -- (0,0);
    \draw[->] (2,-\sideToCircle) -- (2,0);
    \draw[->] (1,6) -- (1,6.5);
    \draw[->] (1,-0.5) -- (1,0);

    \node at (-2.2,3) {$*$};


\begin{scope}[shift={(8,0)}]

    \draw[dashed]  (1, 3) circle (3.5);
            \negCrossing{0,0}
    \lineUp{2,0}
    \negCrossing{1,1}
    \lineUp{0,1}
    \posCrossing{1,2}
    \lineUp{0,2}
    \negCrossing{0,3}
    \lineUp{2,3}
    \negCrossing{1,4}
    \lineUp{0,4}
    \posCrossing{0,5}
    \lineUp{2,5}
    
    \draw[->] (0,6) -- (0,6+\sideToCircle);
    \draw[->] (2,6) -- (2,6+\sideToCircle);
    \draw[->] (0,-\sideToCircle) -- (0,0);
    \draw[->] (2,-\sideToCircle) -- (2,0);
    \draw[->] (1,6) -- (1,6.5);
    \draw[->] (1,-0.5) -- (1,0);

    \draw (2,6+\sideToCircle) to[bend left=50] (5,6);
    \draw (1,6+0.5) to[bend left=50] (5.5,6);
    \draw (0,6+\sideToCircle) to[bend left=50] (6,6);

    \draw (2,-\sideToCircle) to[bend right=50] (5,0);
    \draw (1,-0.5) to[bend right=50] (5.5,0);
    \draw (0,-\sideToCircle) to[bend right=50] (6,0);

    \draw[->] (5,6) -- (5,0);
    \draw[->] (5.5,6) -- (5.5,0);
    \draw[->] (6,6) -- (6,0);

    \node at (-2.2,3) {$*$};
  \end{scope}

    \end{tikzpicture}

  \caption{A braid diagram for word $\sigma_1 \sigma_2 \sigma_2^{-1} \sigma_1 \sigma_2 \sigma_1^{-1}$ (left) and its braid closure (right).}
  \label{Braid closure figure}
\end{figure}
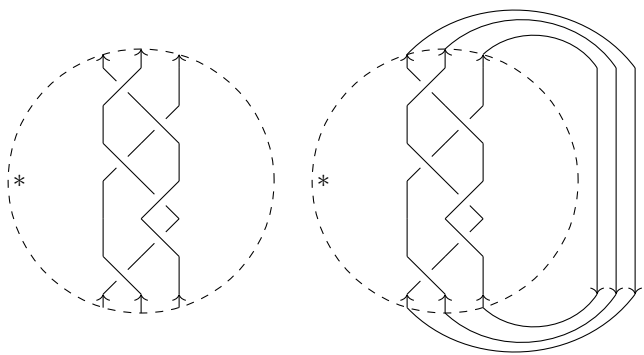


The location of non-trivial homology groups in the bigraded Khovanov homology table is another aspect of Khovanov homology, which is not picked up by the Jones polynomial and which has been investigated in the literature. 
\term{Homological thickness} is the minimal number of adjacent diagonals on which the Khovanov homology of a link is supported  and it was determined for all closures of 3-braids in \cite{Lowrance2011}. Particularly nice are links with homological thickness 2, called \term{thin links}, which contain all non-split alternating links. Building on the work of Lee  \cite{LeeSpectralSequence}, Shumakovitch proved that the Khovanov homology of thin links admit only $\Z/2\Z$ torsion and it can be read from the Jones polynomial and the signature of the link \cite{Shumakovitch2021}. 

The global spectral sequence techniques used by Shumakovitch were made local in \cite{chandler_lowrance_sazdanović_summers_2022} to cover thin regions of the homological table and allowing an application towards closures of 3-braids. By using an old classification of 3-braids  \cite{murasugi1974closed}, the task of proving Part \ref{conjecture: remaining conjecture} of Conjecture \ref{PS conjecture} was broken down into seven steps, $\Omega_0,\dots,\Omega_6$ defined in Table \ref{table for defining Omega0 to Omega6}. Then, combining prior calculations with $\Q$ and $\Z/ 2\Z$ coefficients \cite{TurnerSpectralSequenceQcoef}, \cite{benheddi2017khovanov} and the new techniques, the authors also verified that closures of braids from the sets $\Omega_0,\dots ,\Omega_3$ contain only $\Z/2\Z$ torsion in their Khovanov homology. 

\begin{table}[h] 
    \centering
    \begin{tikzpicture}
    \node at (0,0) {$\begin{aligned}
    \textcolor{blue}{\checkmark} \qquad\qquad &\textcolor{blue}{\checkmark} \qquad &&\Omega_0 = \{(\sigma_1\sigma_2)^{3k}\mid k\in \Z\} \\
    \textcolor{blue}{\checkmark}\qquad\qquad&\textcolor{blue}{\checkmark} &&\Omega_1 = \{(\sigma_1\sigma_2)^{3k+1}\mid k\in \Z\} \\
    \textcolor{blue}{\checkmark} \qquad\qquad&\textcolor{blue}{\checkmark} &&\Omega_2 = \{(\sigma_1\sigma_2)^{3k+2}\mid k\in \Z\} \\
    \textcolor{blue}{\checkmark}\qquad\qquad&\textcolor{blue}{\checkmark} &&\Omega_3 = \{(\sigma_1\sigma_2)^{3k}\sigma_1\mid k\in \Z\} \\
    &\textcolor{blue}{\checkmark} &&\Omega_4 = \{(\sigma_1\sigma_2)^{3k}\sigma_1^{-m}\mid m\in \Z_{>0}, k\in \Z\} \\
    &\textcolor{blue}{\checkmark} &&\Omega_5 = \{(\sigma_1\sigma_2)^{3k}\sigma_2^{m}\mid m\in \Z_{>0}, k\in \Z\} \\
    & &&\Omega_6 = \{(\sigma_1\sigma_2)^{3k} \sigma_1^{-p_1} \sigma_2^{q_1}\dots \sigma_1^{-p_r} \sigma_2^{q_r}\mid r,p_i,q_i\in \Z_{>0}, k\in \Z\}
    \end{aligned}$   
    };
    \node[align=center, font=\footnotesize] at (-5.9,2.35) {Proven in \\ \cite{chandler_lowrance_sazdanović_summers_2022}:};
    \node[align=center, font=\footnotesize] at (-4.2,2.37) {Proven in  \\ this article: };
\end{tikzpicture}
    
    \caption{Proving Part \ref{conjecture: remaining conjecture} of Conjecture \ref{PS conjecture} amounts to showing that closures of 3-braids from the sets  $\Omega_0,\dots,\Omega_6$ only contain  $\Z/2\Z$ torsion in their Khovanov homology. }
    \label{table for defining Omega0 to Omega6}
\end{table}

In addition to reproving the conjecture for the cases $\Omega_0,\dots ,\Omega _3$ with discrete Morse theory, we take a step further and prove it for $\Omega_4$ and $\Omega_5$. With Theorem \ref{simplified algorithm theorem for introduction} the closures of infinite families of braids $\Omega_0,\dots, \Omega_5$ are reduced to a finite set of 1077 links (13 for $\Omega_0,\dots ,\Omega_3$ and 532 for $\Omega_4$ and $\Omega_5$ each).  The Khovanov homologies of these 1077 links are calculated with software KHOCA \cite{Khocasoftwarearticle}, contain only $\Z/2Z$ torsion and can be accessed in \cite{computerDataFor3BraidPaper}. In contrast to the links of type $\Omega_0,\dots ,\Omega_3$, which are locally thin, the links of type $\Omega_4$ and $\Omega_5$ can be arbitrarily locally thick. More specifically, for all $t\geq 4 $ there exists a link $L$ of type $\Omega_4$ with nontrivial Khovanov homologies $\mathscr H^{0,1}(L)\cong\Z \cong \mathscr H^{0,-2t+1}(L)$.

While discrete Morse theory has not been explicitly used for knot homologies previously, somewhat similar ``large scale Gaussian elimination" was used for Rouquier complexes \cite{maltoni2022reducing} which define the triply graded HOMFLY-PT homology \cite{KhovanovTriplyGrHomology}. On the other hand, discrete Morse theory was explicitly used for chromatic homologies of graphs \cite{CHANDLER_SAZDANOVIC_BrokenCircuitsChormaticHomology} which on some gradings agree with Khovanov homology of an associated link \cite{Khovanov_Chromatic_Homology_2018}.

\textbf{Acknowledgement:} The author was supported by The Emil Aaltonen Foundation Grant 220085.

\section{Preliminaries} \label{section: preliminaries}

    A category $\mathbf{C}$ is called \term{additive}, if  it satisfies:
    \begin{enumerate}
        \item  For any objects $A,B\in \mathbf{C}$ the set of morphisms $\Hom(A,B)$ is a $\Z$-module  and we have bilinearity of composition of morphisms.
        \item The category $\mathbf{C}$ contains a zero object and for any objects $A,B$ there exist a coproduct $A\oplus B\in \mathbf{C}$. 
    \end{enumerate}
For any two finite coproducts $\bigoplus_j A_j$ and $\bigoplus_k B_k$  in an additive category there is a $\Z$-module isomorphism
$$
\Hom\Big(\bigoplus_j A_j,\bigoplus_k B_k\Big)\cong  \bigoplus_{j,k}\Hom(A_j,B_k).
$$
The morphisms of $\Hom(A_j,B_k)$ are referred to as  \term{matrix elements} and we often define a map between coproducts by describing them.

 A \term{cochain complex} $(\C, d)$ over an additive category $\mathbf{C}$ is a pair, where $\mathcal{C}=(\C^i)_{i\in \Z}$ is a collection of objects $\mathcal{C}^i$ called \term{chain spaces}. We limit to cochain complexes, henceforth simply called \term{complexes}, for which  $\C^i\not \cong 0$ with only finitely many $i$. For morphisms $d=(d^i\colon \C^i \to \C^{i+1} )_{i\in \Z}$ we assume $d^{i+1}d^i=0$ so in other words, we take the convention that differentials go up in \term{homological grading}. All categories will be bolded: $\mathbf{C}, \mathbf{D}$ and all complexes will be written with calligraphic font: $\mathcal{C}, \mathcal{D}$. We denote $\Kom(\mathbf{C})$ for the category of complexes over $\mathbf{C}$ and notice that there is enough structure $\Kom(\mathbf{C})$ to talk about chain homotopy. However the concept of homology might not make sense in $\Kom(\mathbf{C})$, if we do not have kernels and cokernels in $\mathbf{C}$ as will be the case for Khovanov complexes of tangles. 

An additive category $\mathbf{C}$ is called \term{graded} if the following conditions hold:
    \begin{enumerate}
        \item  For any objects $A,B\in \mathbf{C}$ the set of morphisms $\Hom(A,B)$ is a graded $\Z$-module with $\deg(g\circ f)=\deg (g)+\deg( f)$, whenever the composition makes sense.
        \item There is a grading shift $\Z$-action on the objects $(m,A)\mapsto A\{m\}$. If one forgets the grading, this action leaves the morphisms unchanged: $f\in \Hom(A,B)$ can be also considered as an element in $\Hom(A\{m\},B\{n\})$. With gradings however, if $f\in \Hom(A,B)$ has degree $d$, in $\Hom(A\{m\},B\{n\})$ it will have degree $d+n-m$.

    \end{enumerate}
 Morphisms of a complex $\mathcal{C}$ over a graded category $\mathbf{C}$ have two gradings, homological and the \term{internal grading} from $\mathbf{C}$. The \term{bigraded complexes} that we will study are Khovanov complexes and all morphisms in them have internal degree zero.  Bigraded complexes admit homological and internal grading shift operations, denoted by $[\cdot]$ and $\{\cdot\}$ and defined by $(\C[t])^i=\C^{i-t}$ and $(\C\{t\})^i=(\C^i)\{t\}$.

\subsection{Algebraic Discrete Morse theory}

We say that a complex $(\mathcal{C},d)$ over an additive category $\mathbf{C}$ is \term{based} if there exists index sets $(J_i)_{i\in \Z}$ and objects $\mathcal{C}^i_j$, such that $\mathcal{C}^i=\bigoplus_{j\in J_i} \mathcal{C}^i_j$.
The summands of chain spaces are frequently referred to as \term{cells} which they were if one would apply this theory to cellular homology. For a based complex we define arrows $d_{j\to k}^i\colon \mathcal{C}^i_j \to \mathcal{C}^{i+1}_k$, where $j\in J_i$ and $k\in J_{i+1}$, as the matrix elements of the differential $d^i$.

\begin{lemma}\label{reversing one arrow}
    Assume that $d^i_{C\to E}$ is an isomorphism. The based complex 
\[
\begin{tikzcd}
                    &                                   & \mathcal{C}^{i}_B \arrow[rdd, "d^{i}_{B\to E}" {near start, description}] \arrow[r, "d^{i}_{B\to D}"] &[4em] \mathcal{C}^{i+1}_D \arrow[rd,"d^{i+2}_{D\to E}"] &             &    \\
\cdots \arrow[r] & \mathcal{C}^{i-1}_A \arrow[rd, ""] \arrow[ru, "d^{i-1}_{A\to B}"] &           \oplus                         &     \oplus         & \mathcal{C}^{i+2}_F \arrow[r] & \cdots \\
                    &                                   & \mathcal{C}^{i}_C \arrow[ruu, "d^{i}_{C\to D}" {near start, description}] \arrow[r, "d^{i}_{C\to E}"]  & \mathcal{C}^{i+1}_E \arrow[ru, ""]  &             &   
\end{tikzcd}
\]    
is isomorphic to the based complex 
\[
\begin{tikzcd}[row sep=small]
                    &                                   & \mathcal{C}^{i}_B\arrow[r, "f"] &[4em] \mathcal{C}^{i+1}_D \arrow[rd,"d^{i+1}_{D\to F}"] &             &    \\
\cdots \arrow[r] & \mathcal{C}^{i-1}_A  \arrow[ru, "d^{i-1}_{A\to B}"] &           \oplus                         &     \oplus         & \mathcal{C}^{i+2}_F \arrow[r] & \cdots \\
                    &                                   & \mathcal{C}^{i}_C  \arrow[r, "d^{i}_{C\to E}"]  & \mathcal{C}^{i}_E   &             &   
\end{tikzcd}
\]
which is chain homotopy equivalent to the based complex
\[
\begin{tikzcd}
\cdots \arrow[r] & \mathcal{C}^{i-1}_A \arrow[r,"d^{i-1}_{A\to B}"] & \mathcal{C}^{i}_B\arrow[r, "f"] &[4em] \mathcal{C}^{i+1}_D \arrow[r,"d^{i+1}_{D\to F}"] & \mathcal{C}^{i+2}_F \arrow[r] & \cdots
\end{tikzcd}
  \]  
where $f=d^{i}_{B\to D}-d^{i}_{C\to D}(d^{i}_{C\to E})^{-1}d^{i}_{B\to E}$.

\end{lemma}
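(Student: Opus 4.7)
The plan is to handle the two claimed relations separately: first exhibit an explicit isomorphism between the first two complexes via a change of basis, then split off a contractible direct summand to pass to the third.

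For the isomorphism, I would produce a chain map $\Phi$ from the second complex to the first by taking $\Phi^{i-1}=\mathrm{id}$, $\Phi^{i+2}=\mathrm{id}$, and
\begin{equation*}
\Phi^i=\begin{pmatrix} \mathrm{id} & 0 \\ -(d^i_{C\to E})^{-1}d^i_{B\to E} & \mathrm{id} \end{pmatrix}, \qquad \Phi^{i+1}=\begin{pmatrix} \mathrm{id} & d^i_{C\to D}(d^i_{C\to E})^{-1} \\ 0 & \mathrm{id} \end{pmatrix}.
\end{equation*}
Each of $\Phi^i,\Phi^{i+1}$ is invertible because $d^i_{C\to E}$ is; the inverses merely flip the sign of the off-diagonal entry. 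Checking that $\Phi$ is a chain map amounts to four square-commutativity verifications. The central square, expanded out, matches precisely when the top-left entry of the $2\times 2$ product equals $f$, which is exactly the defining formula. The two outer squares involve the original arrows $d^{i-1}_{A\to C}$ and $d^{i+1}_{E\to F}$, which are absent in the new complex; the identities needed to kill them are exactly the second row of $d^i\circ d^{i-1}=0$ and the second column of $d^{i+1}\circ d^i=0$, respectively.

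For the chain homotopy equivalence, I would observe that in the second complex the only nonzero arrow touching $\mathcal{C}^i_C\oplus \mathcal{C}^{i+1}_E$ is the isomorphism $d^i_{C\to E}$ itself. Hence the second complex decomposes as a direct sum of subcomplexes: the third complex of the lemma together with the two-term complex $\mathcal{C}^i_C\xrightarrow{d^i_{C\to E}}\mathcal{C}^{i+1}_E$ concentrated in degrees $i,i+1$. The latter is contractible, with explicit contracting homotopy given by $(d^i_{C\to E})^{-1}$, so discarding it produces a chain homotopy equivalence onto the third complex.

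The main step requiring care is the verification of the two outer squares of $\Phi$: once the correct row and column of $d\circ d=0$ are extracted, everything else is formal matrix manipulation, so no difficulty beyond bookkeeping is anticipated.
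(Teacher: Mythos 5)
Your proof is correct and is exactly the "Gaussian elimination, made abstract" argument from Bar-Natan that the paper cites in lieu of spelling out a proof: a unipotent change of basis in degrees $i$ and $i+1$ (with the off-diagonal entries you wrote), followed by splitting off the now-isolated contractible two-term summand $\mathcal{C}^i_C\xrightarrow{\ \cong\ }\mathcal{C}^{i+1}_E$. Your identification of the two outer-square identities with the $E$-row of $d^i d^{i-1}=0$ and the $C$-column of $d^{i+1}d^i=0$ is the right bookkeeping, so nothing further is needed.
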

\begin{proof}
    The proof is simple linear algebra and can be found in \cite{FastKhovavnovComputations} where it is appropriately named ``Gaussian elimination, made abstract". 
\end{proof}

Upon finding an isomorphism between two summands, Lemma \ref{reversing one arrow} enables moving to a smaller complex with less summands. This immediately yields a naive algorithm where one iteratively uses the previous lemma until no isomorphisms are found.  Such a procedure was part of the scanning algorithm for Khovanov homology in \cite{FastKhovavnovComputations} which lead huge leap in computation times and in turn enabled discoveries of theoretically interesting counter-examples including some of the ones disproving parts of Conjecture \ref{PS conjecture}. 
On the other hand this naive algorithm does not really seem like a nice way for a human to do calculations as keeping track of the complexes for each reversal of an arrow could be tedious. 
Luckily there is a systematic way of streamlining and avoiding most of this work: algebraic discrete Morse theory.

 Let $M$ be a subset of the matrix element arrows $d^i_{j\to k}$ for a based complex $\mathcal{C}$. We define a directed graph $G(\mathcal{C},M)$, written $G$ when $\mathcal{C}$ and $M$ are clear from the context, whose vertices are the cells $\mathcal{C}^i_j$. The directed edges of $G$  are non-zero arrows $d_{j\to k}^i$ with the arrows in $M$ having their direction reversed.  We call $M$ a \term{Morse matching} if four conditions are met:
 \begin{itemize}
     \item  $M$ is finite.
     \item  $M$ induces a partial matching for the graph $G(\mathcal{C},M)$, i.e. no vertex of $G$ is touched by two edges of $M$.
     \item  Every arrow of $M$ is an isomorphism in $\mathbf{C}$.
     \item  $G(\mathcal{C},M)$ has no directed cycles.
 \end{itemize}
We are particularly interested in paths which alternate between two neighbouring homological levels and call them \term{zig-zag paths}. To indicate that an edge $z\to x$ in $G$ goes up (or down) by one in homological degree, we write $z\nearrow x$ (or $z\searrow x$  respectively, in which case $(x\nearrow z)\in M$). If $M$ is a partial matching and we want to prove acyclicity, it suffices to check there does not exist a zig-zag cycle $z_1\nearrow x_1 \searrow z_2 \nearrow ... \nearrow x_n\searrow z_1$. We call a cell $\C^i_j$, which is not touched by an arrow of $M$, \term{unmatched} or \term{critical} and denote the set of critical cells with homological degree $i$ by $U^i$. Given that $M$ is a Morse matching, in order to find out all paths between critical cells $\mathcal{C}^i_j\to \mathcal{C}^{i+1}_k$ it is enough to consider only zig-zag paths.


A directed graph is not a category, but we will need to make $G(\mathcal{C},M)$ into a category in order use the concept of a natural transformation. The category of paths $\mathbf{P}(\mathcal{C},M)$, shorted $\mathbf{P}$, has vertices of $G$ as objects and whose paths as morphisms. The paths of length zero, which we allow, play the role of identities. Given that $M$ is a Morse matching on $\mathcal{C}$ we can define a covariant \term{remembering functor} $R(\mathcal{C},M)\colon \mathbf{P}(\mathcal{C},M) \to \mathbf{C}$, shorted $R$, which from inputs $\mathcal{C}$ and $M$ recalls the data we forgot when passing from $\mathbf{C}$ to the graph $G$. Formally, we put $R(x)=x$ on the objects and for $f\in M$ we assign $Rf=f^{-1}$ and $Rg=g$ whenever $g\notin M$.


\begin{theorem}[Discrete Morse theory] \label{Discrete Morse theory}
    Let $(\mathcal{C},d)$ be a based complex over $\mathbf{C}$ with a Morse matching $M$. Then $\mathcal{C}$ is chain homotopy equivalent to the ``Morse complex" $(M\C, \partial)$ with chain spaces
    $$
    (M\C)^i= \bigoplus_{\mathcal{C}^i_j\in U^i} \mathcal{C}^i_j.
    $$
    The differential $\partial$ is defined by matrix elements with
    $$
    \partial^i_{j\to k} =\sum_{p\in N} (-1)^{r(p)} Rp
    $$
    where $N=\operatorname{Mor}_{\mathbf{P}}(\mathcal{C}^i_j, \mathcal{C}^{i+1}_k)$  and $r(p)$ is the number of arrows of $M$ in $p$.

    
\end{theorem}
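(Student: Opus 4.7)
The plan is to prove the theorem by iteratively applying Lemma \ref{reversing one arrow} (Gaussian elimination) to the arrows of $M$ one at a time, and then to identify the resulting complex with $(M\mathcal{C}, \partial)$. Since $M$ is finite, only $n := |M|$ applications are required, and the overall chain homotopy equivalence $\mathcal{C}\simeq M\mathcal{C}$ then follows by transitivity from the $n$ individual equivalences supplied by Lemma \ref{reversing one arrow}.

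I would fix any enumeration $m_1, m_2, \ldots, m_n$ of $M$ and let $\mathcal{C}_k$ denote the complex obtained from $\mathcal{C}$ after eliminating $m_1, \ldots, m_k$ in this order. The technical heart of the argument is the following inductive claim describing the matrix elements of $\mathcal{C}_k$: for any two surviving cells $A$ at level $i$ and $B$ at level $i+1$,
$$
d^{(k)}_{A\to B} \;=\; \sum_p (-1)^{r(p)} Rp \;+\; \epsilon_{AB},
$$
where $p$ runs over zig-zag paths from $A$ to $B$ in $G(\mathcal{C}, M)$ whose $\searrow$-arrows lie in $\{m_1, \ldots, m_k\}$, and $\epsilon_{AB}$ equals the original match isomorphism if $(A,B) \in \{m_{k+1},\ldots, m_n\}$ and zero otherwise. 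The base case $k=0$ simply records the original matrix element. The inductive step is direct substitution into the formula $f = d_{B\to D} - d_{C\to D}(d_{C\to E})^{-1} d_{B\to E}$ of Lemma \ref{reversing one arrow}: eliminating $m_{k+1}$ extends each existing zig-zag path by a detour through the new reversed arrow, expanding products of partial sums into zig-zag paths whose $\searrow$-arrows now range over $\{m_1,\ldots,m_{k+1}\}$, and the extra minus sign matches the increment of $r(p)$.

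The main obstacle, and the only place where acyclicity of $G$ is used, is verifying that the matched arrow to be eliminated is still an isomorphism at each step. For $m_{k+1} = (c \nearrow e)$ the inductive claim gives $d^{(k)}_{c\to e}$ equal to the original isomorphism $d_{c\to e}$ plus a sum over \emph{non-trivial} zig-zag paths from $c$ to $e$ in $G$ using $\searrow$-arrows from $\{m_1, \ldots, m_k\}$. Any such path, regarded as a directed walk in $G$, combined with the reversed match edge $e \to c \in G$ (present because $m_{k+1} \in M$), would form a directed cycle in $G$; acyclicity of $G(\mathcal{C}, M)$ rules this out. Hence the correction is empty, $d^{(k)}_{c\to e}$ coincides with the original isomorphism in $\mathbf{C}$, and Lemma \ref{reversing one arrow} applies.

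After all $n$ eliminations the surviving cells are precisely the critical cells $U^i$, and the inductive claim at $k=n$ yields $\partial^i_{j\to k} = \sum_p (-1)^{r(p)} Rp$ exactly as asserted in the theorem (the term $\epsilon_{AB}$ is vacuous since critical cells belong to no match). The restriction of the sum to zig-zag paths, rather than to all morphisms of $\mathbf{P}$, is the content of the observation recorded before the theorem, namely that for critical endpoints at adjacent homological levels only zig-zag paths need be considered.
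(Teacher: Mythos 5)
Your proof is correct and takes the same overall route as the paper --- iterated Gaussian elimination (Lemma~\ref{reversing one arrow}) over the arrows of $M$ --- but it handles the crucial bookkeeping differently, in a way worth noting. The paper sidesteps the need to track intermediate matrix elements by ordering $M$ so that the original matrix element $d^{i_m}_{j_m\to k_t}$ is zero whenever $t<m$ and $i_m=i_t$; a short induction then shows that the row of $j_m$ is untouched by the first $m-1$ eliminations, so each elimination uses the \emph{original} match isomorphism, and acyclicity enters only to guarantee that such an ordering (a topological sort of a $\prec$ relation) exists. You instead fix an arbitrary enumeration of $M$ and carry a strengthened inductive hypothesis that expresses every intermediate matrix element $d^{(k)}_{A\to B}$ as a signed sum over zig-zag paths whose $\searrow$-edges lie in $\{m_1,\dots,m_k\}$, plus a correction $\epsilon_{AB}$ for not-yet-eliminated matches; acyclicity enters at every step, to rule out any nontrivial zig-zag path from $c$ to $e$ (which together with the reversed edge $e\to c$ would close a cycle), and thereby force $d^{(k)}_{c\to e}=\phi$. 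Both arguments use the same lemma and the same source of finiteness and invertibility; what the paper's chosen ordering buys is brevity (no explicit path-sum bookkeeping is ever written down, with the formula for $\partial$ left implicit in the iteration), while what your arbitrary-order argument buys is an explicit, order-independent description of the differential at every intermediate stage, which in particular makes the final identification $\partial^i_{j\to k}=\sum_p(-1)^{r(p)}Rp$ an immediate specialization at $k=n$ rather than a claim one must still unwind. One minor point to flag in your bijection argument: you should note that a zig-zag path in $P^{(k+1)}_{A\to B}$ can traverse the new reversed edge $e\searrow c$ \emph{at most once} (two traversals would again close a directed cycle), so splitting at the unique occurrence is well defined; you gesture at this with the acyclicity appeal but it is worth stating explicitly since the bijection between $P^{(k+1)}_{A\to B}\setminus P^{(k)}_{A\to B}$ and $P^{(k)}_{A\to e}\times P^{(k)}_{c\to B}$ is what makes the expansion of the Gaussian-elimination correction term come out on the nose.
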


\begin{proof}
    We order the set $M$ into a sequence
    $$
    d^{i_1}_{j_1\to k_1} , \dots ,d^{i_n}_{j_n\to k_n}
    $$
    in a way so that the matrix element $d^{i_m}_{j_m\to k_t} $ is a zero-map whenever $t<m$ and $i_m=i_t$. Such order can be found as we assumed that $G$ does not have directed cycles. Then we can use Lemma \ref{reversing one arrow} inductively at each arrow of the sequence which simultaneously proves the claim as well as verifies that $M\mathcal{C}$ in fact is a complex, that is, $\partial^{i+1}\partial^i=0$.  
\end{proof}

In \cite{SkoldbergAlgebraicMorseTheory} the previous theorem is written with slightly different notation and formulated in $\mathbf{Mod}_A$, that is the category of (left) modules over a fixed ring $A$. There is a benefit in working with modules: infinite direct sums make sense in $\mathbf{Mod}_A$ as do complexes with them and with infinitely many arrows. In that paper infinite matchings are allowed which forces the acyclicity requirement of the graph to get replaced by a well-foundedness requirement.

\begin{example}
    Let $\mathcal{C}$ be a complex over $\mathbf{Mod}_{\Z}$ with $\mathcal{C}^i\cong 0$ for every $i$, except for $i=0$ and $i=1$. The chain spaces $\mathcal C^0$ and $\mathcal C^1$ and the differential $d^0$ are given by 
    $$
\begin{tikzcd}[row sep=1.2cm, column sep=0.3cm]
\mathcal C^0= &  & \mathbb{Z} \arrow[d] \arrow[rrrrd] & \oplus & \mathbb{Z} \arrow[lld] \arrow[d] & \oplus & \mathbb{Z}/4\mathbb Z \arrow[lld] \arrow[d] \\
\mathcal C^1= &  & \mathbb{Z}                         & \oplus & \mathbb{Z}/4\mathbb Z                     & \oplus & \mathbb{Z}/4\mathbb Z                      
\end{tikzcd} 
    \qquad \quad 
    d^0= \begin{bmatrix}
\operatorname{id}_{\mathbb Z} & \operatorname{id}_{\mathbb Z} & 0 \\
0 & \pi & \operatorname{id}_{\mathbb Z / 4 \Z} \\
\pi & 0 & \operatorname{id}_{\mathbb Z/4 \Z} 
\end{bmatrix}  
    $$
    where $\pi$ is the canonical projection. Choosing $M$ to be the set of the two south-west pointing arrows (entries (1,2) and (2,3) of the matrix $d^0$) leads to the directed graph
    \begin{equation}
    \begin{tikzcd}[row sep=0.25cm, column sep=0.5cm]
 &  & \bullet \arrow[dd] \arrow[rrrrdd] &  & \bullet \arrow[lldd,<-] \arrow[dd] &  & \bullet \arrow[lldd,<-] \arrow[dd] \\
G(\mathcal{C},M)= & & & & & & \\
 &  & \bullet                         &  & \bullet                     &  & \bullet                      
\end{tikzcd}    \label{Equation: example graph G(C,M)}
    \end{equation}
from which we can see that $M$ is a Morse matching. Thus $\mathcal{C}$ is homotopy equivalent to the Morse complex $M\mathcal{C}$ whose non-zero chain spaces are given by the two unmatched cells
$$
\begin{tikzcd}
(M \mathcal C)^0=   & \mathbb Z \arrow[d, "\partial^0"] \\
(M \mathcal C)^1=   & \mathbb Z / 4 \mathbb Z. 
\end{tikzcd}
$$
The differential map $\partial^0$ is the sum of the two paths from top-left to bottom-right in Graph \ref{Equation: example graph G(C,M)} $$\partial^0= (-1)^0\pi +(-1)^2 \operatorname{id}_{\mathbb Z / 4 \Z}\  \operatorname{id}_{\mathbb Z / 4 \Z}^{-1}  \  \pi \   \operatorname{id}_{\mathbb Z}^{-1} \  \operatorname{id}_{\mathbb Z} =2\pi$$
or equivalently $\partial^0(x)=[2x]$.
\end{example}

The next proposition allows us to compare Morse complexes with each other by mostly using graph theory. We define the category of critical paths $\mathbf{CP}(\mathcal{C}, M, i)$, shorted $\mathbf{CP}$, as a subcategory of $\mathbf{P}(\mathcal{C}, M)$ and denote $I_{\mathcal{C}}= I(\mathcal{C}, M, i)$ as the corresponding inclusion functor. The morphisms of $\mathbf{CP}(\mathcal{C}, M, i)$ are limited to sub-paths of paths between critical cells of homological degree $i$ and $i+1$ and the objects are vertices along those paths. 


\begin{proposition}\label{Morse comparison with natural transformation}

    Let $\mathcal{C}$ and $\mathcal{D}$ be complexes over $\mathbf{C}$ and let $M$ and $N$ be Morse matchings of them respectively. Suppose there is a functor $F\colon \mathbf{CP}(\mathcal{C}, M, i)\to \mathbf{CP}(\mathcal{D}, N, i)$ that satisfies the following:
    \begin{itemize}
        \item The functor $F$ sends unmatched cells of $\mathcal{C}$ bijectively to unmatched cells of $\mathcal{D}$ and preserves homological degree.
        
        \item Suppose $c_1$ and $c_2$ are unmatched cells. The functor $F$ induces a bijection $\operatorname{Mor}(c_1,c_2)\to\operatorname{Mor}(F(c_1), F(c_2))$ and if $p\in \operatorname{Mor}(c_1,c_2)$, then the number of reversed edges in $p$ and $Fp$ agree modulo 2.
        

        \item There exists a natural transformation $\gamma\colon R(\mathcal{C}, M)\circ I_{\mathcal{C}} \Rightarrow R(\mathcal{D}, N) \circ I_{\mathcal{D}} \circ F$. 
    \end{itemize}
    Denote by $\pi_a$ and $\iota_a$ the canonical projections and inclusions and define a diagonal matrix morphism $f^{i}=\sum_{c\in U^i} \iota_{F(c)}\gamma_c \pi_c$ which is an isomorphism if all $\gamma_c$ are. Define $f^{i+1}$ similarly and we have a commutative diagram of chain spaces
    $$
\begin{tikzcd}
(M\mathcal{C})^i \arrow[d] \arrow[r, "f^i"] 
& (N\mathcal{D})^{i} \arrow[d] \\
(M\mathcal{C})^{i+1} \arrow[r, "f^{i+1}"]
& (N\mathcal{D})^{i+1}.        
\end{tikzcd}
    $$

\end{proposition}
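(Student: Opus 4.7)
The plan is to check commutativity entry-by-entry, using the two bijections $F$ provides and the naturality of $\gamma$. Since $f^i$ is diagonal, it suffices to fix $c\in U^i$ and $c'\in U^{i+1}$ and compare the $(c,c')$-components of $f^{i+1}\circ \partial^i_{M\mathcal{C}}$ and $\partial^i_{N\mathcal{D}}\circ f^i$, where $\partial^i_{M\mathcal{C}}$ and $\partial^i_{N\mathcal{D}}$ are the differentials on the Morse complexes produced by Theorem \ref{Discrete Morse theory}. Writing $\iota$ and $\pi$ for inclusions and projections into the direct-sum decompositions of $(M\mathcal{C})^\bullet$ and $(N\mathcal{D})^\bullet$, one computes that this comparison reduces to the single identity
$$
\gamma_{c'}\circ \partial^i_{c\to c'} \;=\; \partial^i_{F(c)\to F(c')}\circ \gamma_c
$$
in the category $\mathbf{C}$.

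Next I would expand both sides using the explicit formula from Theorem \ref{Discrete Morse theory}. The left-hand side becomes
$$
\sum_{p\in \operatorname{Mor}_{\mathbf{P}}(c,c')} (-1)^{r(p)}\, \gamma_{c'}\circ R(\mathcal{C},M)(p),
$$
while the right-hand side becomes
$$
\sum_{q\in \operatorname{Mor}_{\mathbf{P}}(F(c),F(c'))} (-1)^{r(q)}\, R(\mathcal{D},N)(q)\circ \gamma_c.
$$
The first hypothesis on $F$ gives a bijection between the index sets of these two sums, and the second hypothesis guarantees that corresponding terms carry matching signs. Hence the identity reduces to the term-wise equality $\gamma_{c'}\circ R(\mathcal{C},M)(p) = R(\mathcal{D},N)(F(p))\circ \gamma_c$ for every zig-zag path $p$ from $c$ to $c'$. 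But such a $p$ is precisely a morphism of $\mathbf{CP}(\mathcal{C},M,i)$, so this is exactly the naturality square of $\gamma$ applied to $p$, where we also use that the inclusion functors $I_{\mathcal{C}}$ and $I_{\mathcal{D}}$ act as identity on morphisms by construction.

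Finally, the statement about $f^i$ being an isomorphism when every $\gamma_c$ is an isomorphism follows from the fact that $f^i$ is block-diagonal along the bijection $F\colon U^i\to U^i_{\mathcal{D}}$, with diagonal blocks $\gamma_c$. I do not expect a serious obstacle here: all the real content is packed into the hypotheses of the proposition, and the proof is essentially a formal reshuffling. The only mild care needed is in bookkeeping, namely making sure that the sums defining the Morse differentials are indexed by morphisms in $\mathbf{CP}$ rather than $\mathbf{P}$ (which is legitimate since the summands involving non-critical targets vanish by the unmatched condition) so that the naturality of $\gamma$, which only lives over $\mathbf{CP}$, can be applied path by path.
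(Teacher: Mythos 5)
Your proposal is correct and follows essentially the same route as the paper: fix a critical source (and target), expand the matrix entry of the Morse differential as the signed path sum from Theorem \ref{Discrete Morse theory}, match the index sets via the bijection hypothesis on $F$, match signs via the parity hypothesis, and finish by invoking the naturality square of $\gamma$ for each path. The only inessential difference is that the paper precomposes with $\iota_a$ and sums over targets rather than isolating a single $(c,c')$-entry, and your parenthetical about "summands involving non-critical targets" is unnecessary — for critical $c,c'$ in consecutive degrees the sets $\operatorname{Mor}_{\mathbf{P}}(c,c')$ and $\operatorname{Mor}_{\mathbf{CP}}(c,c')$ already coincide by the definition of $\mathbf{CP}$, so no terms need to vanish.
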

\begin{proof}
    Denote $d^i$ as the differential and $U^{i}$ as the set of critical cells of degree $i$ for complex $M\mathcal{C}$. Similarly denote $\partial^i$ and $V^{i}$ for $N\mathcal{D}$ and let $a\in U^i$. A direct calculation shows
    \begin{align*}
        f^{i+1}d^i \iota_a 
        &= \sum_{c\in U^{i+1}} \iota_{F(c)} \sum_{p\in \operatorname{Mor}(a,c) }(-1)^{r(p)} \gamma_c (R_{\mathcal{C}}p) \\
        &= \sum_{c\in U^{i+1}} \iota_{F(c)} \sum_{p\in \operatorname{Mor}(a,c) }(-1)^{r(Fp)} (R_{\mathcal{D}}Fp)\gamma_a \\
        &= \sum_{c'\in V^{i+1}} \iota_{c'} \sum_{p'\in \operatorname{Mor}(F(a),c') }(-1)^{r(p')} (R_{\mathcal{D}}p')\gamma_a \\
        &=\partial^{i+1} f^i \iota_a
    \end{align*}
    which sufficies for $f^{i+1}d^i=\partial^{i+1} f^i$.    
\end{proof}

\subsection{Khovanov complexes of tangles}

We will be using Bar-Natan's dotted cobordism formulation of Khovanov homology; see Section 11.2 of  \cite{BarNatanKhovanovTangles}. 
Bar-Natan's version is chosen, so that we can do manipulations for complexes of smaller pieces of links, tangles. From Bar-Natan's theory, we choose the dotted cobordism one, since it brings about a simple \term{delooping isomorphism} $\Psi$ which enables us the concrete use of discrete Morse theory. This theoretical framework, compared to the original theory of Khovanov \cite{khovanov1999categorification}, allows for more flexible composition, making it possible to obtain results about larger classes of links.

Let us write down our conventions for the Khovanov complexes of tangles following \cite{BarNatanKhovanovTangles} and starting with the definition of categories $\Cob(2b)$ where $b\in \mathbb Z_{\geq 0}$. An object $o$ of $\Cob(2b)$ is a compact 1-manifold with $2b$ boundary points inside a closed disk $\mathbb D^2$. The boundary points of $o$ are contained in the boundary of the disk $\mathbb S^1 \subset \mathbb D^2$ and there is an additional marked point $(*)$ on $\mathbb S^1$. Moreover, $o$ is considered up to an orietatation preserving diffeomorphism on $\mathbb D^2$.

The set of morphisms $\operatorname{Hom}_{\Cob(2b)}(o_1,o_2)$ is a $\mathbb Z$-module spanned by dotted, up-to-isotopy cobordisms from $o_1$ to $o_2$ inside the cylinder $\mathbb D^2 \times [0,1]$. We require that in the boundary $\mathbb S^1 \times [0,1]$ these spanning cobordisms are constant, so that $n$:th boundary point of $o_1$, clockwise from $(*)$, will be connected by a path in $\mathbb S^1 \times [0,1]$ (or an equivalence class of paths to be precise) to the $n$:th boundary point of $o_2$, clockwise from $(*)$. 
The local relations of $\operatorname{Hom}_{\Cob(2b)}(o_1,o_2)$ are portrayed in Figure \ref{Relations of dotted cobordisms}. 
We can upgrade $\Cob(2b)$ into an additive category $\Mat (\Cob(2b))$, whose objects are formal direct sums of objects of $\Cob (2b)$. The morphisms of $\Mat (\Cob(2b))$ are matrices, the matrix elements of which are morphisms of $\Cob(2b)$. The category $\Mat(\Cob(2b))$ is also graded; a cobordism $f$ admits 
$\deg(f)=\chi(f)-b -2\cdot\#\{\text{dots in $f$}\} $
which we luckily do not need to worry about since all of the morphisms in question will be of degree 0.

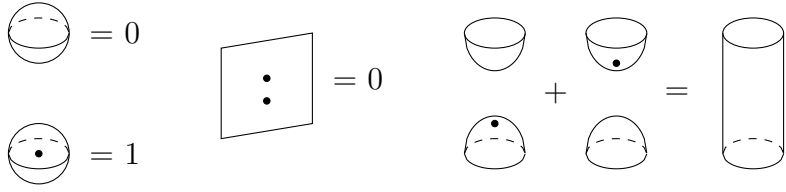
\begin{figure}[ht]
    \centering    
\begin{tikzpicture}[scale=0.4]
  
  \def\xradius{1};
  \def\yradius{0.5};
  \def\ballsxcoor{-15}
  \def\rectanglexcoor{-9}

      \draw (\ballsxcoor,4) ellipse (\xradius cm and \xradius cm);
    \draw[dashed] plot[domain=-\xradius:\xradius] ({\x +\ballsxcoor},{4+1*\yradius * sqrt(1 - (\x / \xradius)^2)});
  \draw plot[domain=-\xradius:\xradius] ({\x +\ballsxcoor},{4-1*\yradius * sqrt(1 - (\x / \xradius)^2)});
\draw (\ballsxcoor+2.5,4) node {\large = 0};

      \draw (\ballsxcoor,0) ellipse (\xradius cm and \xradius cm);
    \draw[dashed] plot[domain=-\xradius:\xradius] ({\x +\ballsxcoor},{1*\yradius * sqrt(1 - (\x / \xradius)^2)});
  \draw plot[domain=-\xradius:\xradius] ({\x +\ballsxcoor},{-1*\yradius * sqrt(1 - (\x / \xradius)^2)});
\draw (\ballsxcoor+2.5,0) node {\large = 1};

  \coordinate (A) at (\rectanglexcoor,0.5);
  \coordinate (B) at (\rectanglexcoor+3,1);
  \coordinate (C) at (\rectanglexcoor+3,4);
  \coordinate (D) at (\rectanglexcoor,3.5);
  
  \draw (A) -- (B) -- (C) -- (D) -- cycle;

    \filldraw (\rectanglexcoor+1.5,1.75) circle (3pt);
    \filldraw (\rectanglexcoor+1.5,2.5) circle (3pt);
    \draw (\rectanglexcoor+4.5,2.5) node {\large = 0};

    \filldraw (\ballsxcoor,0) circle (3pt);

  \draw (0,4) ellipse (\xradius cm and \yradius cm);
  \draw (4,4) ellipse (\xradius cm and \yradius cm);

\draw[dashed] plot[domain=-\xradius:\xradius] ({\x},{1*\yradius * sqrt(1 - (\x / \xradius)^2)});
\draw plot[domain=-\xradius:\xradius] ({\x},{-1*\yradius * sqrt(1 - (\x / \xradius)^2)});

  \draw plot[domain=-\xradius:\xradius] ({\x},{4 - 2.5*\yradius * sqrt(1 - (\x / \xradius)^2)});
  \draw plot[domain=-\xradius:\xradius] ({\x + 4},{4 - 2.5*\yradius * sqrt(1 - (\x / \xradius)^2)});

  \draw plot[domain=-\xradius:\xradius] ({\x},{2.5*\yradius * sqrt(1 - (\x / \xradius)^2)});
  \draw plot[domain=-\xradius:\xradius] ({\x + 4},{2.5*\yradius * sqrt(1 - (\x / \xradius)^2)});
  \draw[dashed] plot[domain=-\xradius:\xradius] ({\x + 4},{1*\yradius * sqrt(1 - (\x / \xradius)^2)});
  \draw plot[domain=-\xradius:\xradius] ({\x + 4},{-1*\yradius * sqrt(1 - (\x / \xradius)^2)});
  
  \draw (2,2) node {\large +};

    \draw (6,2) node {\large =};

    \draw (8.5,4) ellipse (\xradius cm and \yradius cm);
  \draw[dashed] plot[domain=-\xradius:\xradius] ({\x + 8.5},{1*\yradius * sqrt(1 - (\x / \xradius)^2)});
  \draw plot[domain=-\xradius:\xradius] ({\x + 8.5},{-1*\yradius * sqrt(1 - (\x / \xradius)^2)});

    \draw (7.5,0) -- (7.5,4);
    \draw (9.5,0) -- (9.5,4);
    
    \filldraw (4,3) circle (3pt);
    \filldraw (0,1) circle (3pt);
  
\end{tikzpicture}
    
    \caption{Local relations of dotted cobordisms.}
    \label{Relations of dotted cobordisms}
\end{figure}

The \term{Khovanov complex} $\llbracket T \rrbracket$ of a tangle diagram $T$ with $2b$ boundary points is a complex over the category  $\Mat (\Cob(2b))$, see Figure \ref{Figure: example of a Khovanov complex} for an example. A \term{smoothing} of $T$ is an object of $\Cob(2b)$ obtained by replacing every \term{crossing} \begin{tikzpicture}[baseline={(0,0.05)},scale=0.3]
    \negCrossing{0,0} 
\end{tikzpicture} with either \begin{tikzpicture}[baseline={(0,0.05)},scale=0.3]
    \lineRightBendUp{0,0}
    \lineRightBendDown{0,1}
\end{tikzpicture} or \begin{tikzpicture}[baseline={(0,0.05)},scale=0.3]
    \lineUpBendRight{0,0}
    \lineUpBendLeft{1,0}
\end{tikzpicture}. If $T$ has $n$ crossings, $n_+$ of which are \term{positive} and $n_-$ \term{negative}, there are $2^n$ smoothings of $T$; see Figure \ref{Positive negative crossings zero one smoothings} for conventions and Figure \ref{Enhanced words diagram} for an example. There are also $2^n$ elements in the set of \term{$01$-words} of length $n$ and given an order on the crossings, we obtain a correspondence 
$$
\{\text{smoothings of }T\} \longleftrightarrow \{0,1\}^n.
$$
The $(k-n_-)$:th chain space of the complex $\llbracket T\rrbracket$ is the formal direct sum of all of the smoothings corresponding to those $01$-words with $k$ ones in them.  Additionally, the chain space at level $k-n_-$ is also shifted with $\{ k+n_+-2n_-\}$.

\begin{figure}[ht]
  \centering
  \hspace{8 mm}
  \begin{subfigure}[b]{0.25\textwidth}
        \centering
        \begin{tikzpicture}
    \begin{scope}[shift={(-1,0)}, scale=0.5]
        \negCrossing{0,0}
        \negCrossingWArrowsRight{1,0}
        \draw[dashed] (1,0.5) circle (1.118);
        \node at (1,0.5) { $\Bigg\llbracket$ \hspace{1.2cm} $\Bigg\rrbracket$};
        
    \node at (1,1.42) {$*$};
    \end{scope}

    \node at (0.75,0.25) { $=$};

    \begin{scope}[shift={(2,0)}, scale=0.5]
        \lineRightBendUp{0,0}
        \lineRightBendDown{0,1}
        \lineRightBendUp{1,0}
        \lineRightBendDown{1,1}
        
        \draw[dashed] (1,0.5) circle (1.118);
        \node at (1,2) {$00$};
        
    \node at (1,1.42) {$*$};
    \end{scope}

    \begin{scope}[shift={(4,0.75)}, scale=0.5]
        \lineUpBendRight{1,0}
        \lineUpBendLeft{2,0}
        \lineRightBendUp{0,0}
        \lineRightBendDown{0,1}
        
        \draw[dashed] (1,0.5) circle (1.118);
        \node at (1,2) {$01$};
        
    \node at (1,1.42) {$*$};
        
    \end{scope}

    \begin{scope}[shift={(4,-0.75)}, scale=0.5]
        
        \lineUpBendRight{0,0}
        \lineUpBendLeft{1,0}
        \lineRightBendUp{1,0}
        \lineRightBendDown{1,1}
        
        \draw[dashed] (1,0.5) circle (1.118);
        \node at (1,-1) {$10$};
        
    \node at (1,1.42) {$*$};
    \end{scope}

    \begin{scope}[shift={(6,0)}, scale=0.5]
        \lineUpBendRight{0,0}
        \lineUpBendLeft{1,0}       
        \lineUpBendRight{1,0}
        \lineUpBendLeft{2,0}
        \node at (1,2) {$11$};
        \draw[dashed] (1,0.5) circle (1.118);
        
    \node at (1,1.42) {$*$};
    \end{scope}

    \node at (4.5,0.25) {$\oplus$};

    \draw[->] (3.2,0.5) -- (3.8, 0.8)  node[midway, above] {$f_1$};
    \draw[->] (3.2,0) -- (3.8, -0.3)  node[midway, below] {$f_2$};
    \draw[->] (5.2,0.8) -- (5.8, 0.5)  node[midway, above] {$f_3$};
    \draw[->] (5.2,-0.3) -- (5.8, 0)  node[midway, below] {$-f_4$};

    \node at (9,0.25) {$f_4=$};

\end{tikzpicture}
  \end{subfigure}
  \hfill
  \begin{subfigure}[b]{0.25\textwidth}
  \centering
    \includegraphics[scale=0.6]{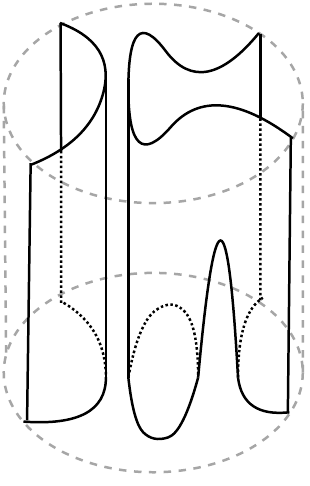}
  \end{subfigure}
  \caption{Khovanov complex of $\sigma_1^{-2}$ and one of its morphisms (gradings omitted). Contrary to our usual conventions, the braid $\sigma_1^{-2}$ is rotated 90 degrees clockwise so that the cobordism $f_4$ is easier to perceive.}
  \label{Figure: example of a Khovanov complex}
\end{figure}

\begin{figure}[t]
    \centering
    \begin{tikzpicture}
        \posCrossingWArrows{0,0}
        
        \negCrossingWArrows{2,0}

        \negCrossing{7,0}
        \lineRightBendUp{5,0}
        \lineRightBendDown{5,1}

        \lineUpBendRight{9,0}
        \lineUpBendLeft{10,0}

    \node at (0.5,1.25) {$+$};
    
    \node at (2.5,1.25) {$-$};
    
    \node at (5.5,1.25) {$0$};

    \node at (9.5,1.25) {$1$};
\draw[->,dashed] (6.8, 0.5) to (6.2, 0.5);

\draw[->,dashed] (8.2, 0.5) to (8.8, 0.5);
    \end{tikzpicture}    
    \caption{Sign and smoothing conventions for crossings.}
    \label{Positive negative crossings zero one smoothings}
\end{figure}
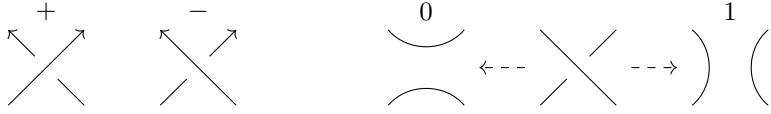

The smoothings and non-zero matrix elements form the 1-skeleton of an $n$-dimensional hypercube.
The non-zero matrix elements are cobordisms, which occur between $01$-words when the words differ at exactly one index. Such a cobordism forms a saddle at the crossing of the given index and are identity elsewhere, again see Figure \ref{Figure: example of a Khovanov complex}. 
The sign of this cobordism is $(-1)^k$ where $k$ is the number of ones before $r$ in the domain word (or codomain word) and $r$ is the index in which the words differ. The situation resembles  simplicial complexes -- different orderings lead to different (but isomorphic) cochain complexes. 

\subsection{Enhanced words of a braid diagram}\label{Section: enhanced words of a braid diagram}

By an \term{$n$-braid word} we mean a finite sequence of characters $\sigma_1,\dots, \sigma_{n-1}, \allowbreak \sigma_1^{-1},\dots, \sigma_{n-1}^{-1}$. A braid word defines a braid diagram for example in the case of 3-braids by assigning 
$$
\sigma_1\mapsto \tikz[baseline={(0,0.05)},scale=0.3]{
    \negCrossing{0,0} \lineUp{2,0}
}
\qquad
\sigma_2\mapsto\begin{tikzpicture}[baseline={(0,0.05)},scale=0.3]
    \negCrossing{1,0} \lineUp{0,0}
\end{tikzpicture}
\qquad
\sigma_1^{-1}\mapsto\begin{tikzpicture}[baseline={(0,0.05)},scale=0.3]
    \posCrossing{0,0} \lineUp{2,0}
\end{tikzpicture}
\qquad
\sigma_2^{-1} \mapsto \begin{tikzpicture}[baseline={(0,0.05)},scale=0.3]
    \posCrossing{1,0} \lineUp{0,0}
\end{tikzpicture},
$$
stacking these pictures on top of each other, orienting from bottom to top and enclosing in a circle, all of which is illustrated in Figure \ref{Braid closure figure}.  When creating the Khovanov complex of a braid word, the crossings are ordered from bottom to top and one should note that if the braid words $T$ and $T'$ are the same as elements of the braid group, the Reidemeister invariance of Khovanov complexes guarantees $\llbracket T \rrbracket \simeq \llbracket T' \rrbracket$.

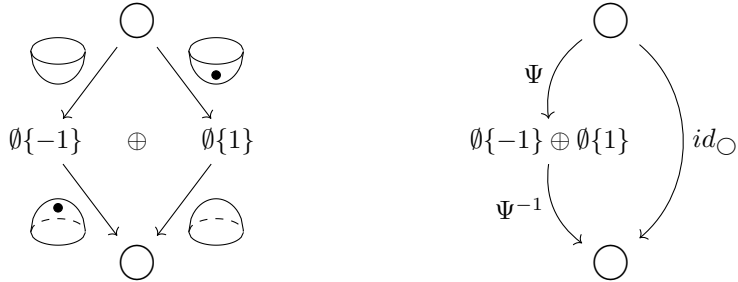
\begin{figure}[ht]
  \centering
  \begin{subfigure}[b]{0.4\textwidth}
  \centering
    \begin{tikzpicture}[scale=0.8]

  \node (A) at (0, 4) {\Large$\bigcirc$};
\node (B) at (1.5, 2) { $\emptyset \{1\}$};
  \node (C) at (-1.5, 2) { $\emptyset \{-1\}$};
  \node (D) at (0, 0) {\Large$\bigcirc$};

\node (E) at (0, 2) {$\oplus$};

  \draw[->] (A) -- (B);
  \draw[->] (A) -- (C);
  \draw[->] (B) -- (D);
  \draw[->] (C) -- (D);






  \def\xradius{0.45};
  \def\yradius{0.22};

\def\a{1.3}
\def\b{3.5}

\def\c{-1.3}
\def\d{3.5}

\def\e{1.3}
\def\f{0.5}

\def\g{-1.3}
\def\h{0.5}

\draw plot[domain=-\xradius:\xradius] ({\a+\x},{\b+1*\yradius * sqrt(1 - (\x / \xradius)^2)});
\draw plot[domain=-\xradius:\xradius] ({\a+\x},{\b-1*\yradius * sqrt(1 - (\x / \xradius)^2)});
\draw plot[domain=-\xradius:\xradius] ({\a+\x},{\b-2.5*\yradius * sqrt(1 - (\x / \xradius)^2)});
\filldraw (\a,\b-0.4) circle (2pt);

\draw plot[domain=-\xradius:\xradius] ({\c+\x},{\d+1*\yradius * sqrt(1 - (\x / \xradius)^2)});
\draw plot[domain=-\xradius:\xradius] ({\c+\x},{\d-1*\yradius * sqrt(1 - (\x / \xradius)^2)});
\draw plot[domain=-\xradius:\xradius] ({\c+\x},{\d-2.5*\yradius * sqrt(1 - (\x / \xradius)^2)});

\draw[dashed] plot[domain=-\xradius:\xradius] ({\e+\x},{\f+1*\yradius * sqrt(1 - (\x / \xradius)^2)});
\draw plot[domain=-\xradius:\xradius] ({\e+\x},{\f-1*\yradius * sqrt(1 - (\x / \xradius)^2)});
\draw plot[domain=-\xradius:\xradius] ({\e+\x},{\f+2.5*\yradius * sqrt(1 - (\x / \xradius)^2)});

\draw[dashed] plot[domain=-\xradius:\xradius] ({\g+\x},{\h+1*\yradius * sqrt(1 - (\x / \xradius)^2)});
\draw plot[domain=-\xradius:\xradius] ({\g+\x},{\h-1*\yradius * sqrt(1 - (\x / \xradius)^2)});
\draw plot[domain=-\xradius:\xradius] ({\g+\x},{\h+2.5*\yradius * sqrt(1 - (\x / \xradius)^2)});
\filldraw (\g,\h+0.4) circle (2pt);

\end{tikzpicture}

  \end{subfigure}
  \begin{subfigure}[b]{0.4\textwidth}
  \centering
    \begin{tikzpicture}[scale=0.8][node distance=2cm]
  \node (F) at (1,2) {\Large$\bigcirc$};
  \node (G) at (0,0) {$\emptyset\{-1\}\oplus \emptyset\{1\}$};
  \node (H) at (1,-2) {\Large$\bigcirc$};
  
  \draw[->] (F) to[bend right=30] node[left] {$\Psi$} (G);
  \draw[->] (G) to[bend right=30] node[left] {$\Psi^{-1}$} (H);
  \draw[->] (F) to[bend left=50] node[right] {$\mathop{{id}}_{\bigcirc}$} (H);

\end{tikzpicture}

  \end{subfigure}
    \caption{Delooping isomorphism $\Psi$ for a single circle and its inverse $\Psi^{-1}$ as a matrices of cobordism (left) and diagrammatically (right). }
    \label{Delooping isomorphism}
\end{figure}

An object $o\in \Cob(2b)$ can be written as a disjoint sum 
$$
o=\bigsqcup_{p\in P} p \sqcup \bigsqcup_{c\in C} c
$$
where $P$ is a $b$-sized set of paths and $C$ is a finite set of closed loops. By considering $o$ as an object in $\Mat(\Cob(2b))$ we can define the delooping isomorphism
$$
\Psi\colon o\to \bigoplus_{l\in L}\bigg(\bigsqcup_{p\in P} p \bigg)\left\{\int l \, d \mu  \right\}
$$
where $L$ is the set of functions $C\to \{-1,1\}$, $\mu$ is the counting measure and $\Psi$ is depicted locally in Figure \ref{Delooping isomorphism}.  By some careful notation, we can bake the information of $l$ into the $01$-words. If $l(c)=-1$ for a circle $c$, we put a superscripts $\bx$ on the character of the 01-word corresponding to the unique crossing that splits $c$ from the furthest below. Correspondingly, we put superscript $\bone$ if $l(c)=1$. For characters in the 01-word, which do not correspond to crossings splitting circles from the furthest below, we put superscript $\bminus$. All of this can be better absorbed by examining Figure \ref{Enhanced words diagram}, where we have  correspondences 
$$
\Big(l(  \bigcirc
)=1  \Big) \longleftrightarrow  1^{\bminus}0^{\boldsymbol 1}1^{\bminus}0^{\bminus}0^{\bminus}1^{\bminus}  \quad \text{ and } \quad \Big( l( \bigcirc) =-1 \Big) \longleftrightarrow  1^{\bminus}0^{\boldsymbol x}1^{\bminus}0^{\bminus}0^{\bminus}1^{\bminus}.
$$

\begin{figure}
    \centering\begin{tikzpicture}[scale=0.5]

    \pgfmathsetmacro{\sideToCircle}{0.5}

    \begin{scope} [shift={(0,0)}]

    \node at (1,-1.5) {$\sigma_1\sigma_2\sigma_1\sigma_2\sigma_1\sigma_2$};    
    \negCrossing{0,0}
    \lineUp{2,0}
    \negCrossing{1,1}
    \lineUp{0,1}
    \negCrossing{0,2}
    \lineUp{2,2}
    \negCrossing{1,3}
    \lineUp{0,3}
    \negCrossing{0,4}
    \lineUp{2,4}
    \negCrossing{1,5}
    \lineUp{0,5}

    \draw[->] (0,6) -- (0,6+\sideToCircle);
    \draw[->] (2,6) -- (2,6+\sideToCircle);
    \draw[->] (0,-\sideToCircle) -- (0,0);
    \draw[->] (2,-\sideToCircle) -- (2,0);
    \draw[->] (1,6) -- (1,6.5);
    \draw[->] (1,-0.5) -- (1,0);
    
    \end{scope}

    \begin{scope} [shift={(6,0)}]
        \node at (1,-1.5) {$101001$};    
    
    \lineUpBendRight{0,0}
    \lineUpBendLeft{1,0}    
    \lineUp{2,0}
    \lineRightBendUp{1,1}
    \lineRightBendDown{1,2}
    \lineUp{0,1}
    \lineUpBendRight{0,2}
    \lineUpBendLeft{1,2}
    \lineUp{2,2}
    \lineRightBendUp{1,3}
    \lineRightBendDown{1,4}
    \lineUp{0,3}
    \lineRightBendUp{0,4}
    \lineRightBendDown{0,5}
    \lineUp{2,4}
    \lineUpBendRight{1,5}
    \lineUpBendLeft{2,5}
    \lineUp{0,5}

    \draw (0,6) -- (0,6+\sideToCircle);
    \draw (2,6) -- (2,6+\sideToCircle);
    \draw (0,-\sideToCircle) -- (0,0);
    \draw (2,-\sideToCircle) -- (2,0);
    \draw (1,6) -- (1,6.5);
    \draw (1,-0.5) -- (1,0);
    
    \end{scope}

    \begin{scope} [shift={(12,0)}]
        \node at (1,-1) {\small$1^{\bminus}0^{\boldsymbol 1}1^{\bminus}0^{\bminus}0^{\bminus}1^{\bminus}$};   
    
    \lineUpBendRight{0,0}
    \lineUpBendLeft{1,0}    
    \lineUp{2,0}
    \lineRightBendUp{1,1}

    \lineUp{0,1}
    \lineUpBendRight{0,2}

    \lineRightBendDown{1,4}
    \lineUp{0,3}
    \lineRightBendUp{0,4}
    \lineRightBendDown{0,5}
    \lineUp{2,4}
    \lineUpBendRight{1,5}
    \lineUpBendLeft{2,5}
    \lineUp{0,5}

    \draw (0,6) -- (0,6+\sideToCircle);
    \draw (2,6) -- (2,6+\sideToCircle);
    \draw (0,-\sideToCircle) -- (0,0);
    \draw (2,-\sideToCircle) -- (2,0);
    \draw (1,6) -- (1,6.5);
    \draw (1,-0.5) -- (1,0);
    
    \end{scope}
            
    \begin{scope} [shift={(16,0)}]
        \node at (1.5,-1.6) {\small$ 1^{\bminus}0^{\boldsymbol x}1^{\bminus}0^{\bminus}0^{\bminus}1^{\bminus}$};
        
    
    \lineUpBendRight{0,0}
    \lineUpBendLeft{1,0}    
    \lineUp{2,0}
    \lineRightBendUp{1,1}

    \lineUp{0,1}
    \lineUpBendRight{0,2}

    \lineRightBendDown{1,4}
    \lineUp{0,3}
    \lineRightBendUp{0,4}
    \lineRightBendDown{0,5}
    \lineUp{2,4}
    \lineUpBendRight{1,5}
    \lineUpBendLeft{2,5}
    \lineUp{0,5}

    \draw (0,6) -- (0,6+\sideToCircle);
    \draw (2,6) -- (2,6+\sideToCircle);
    \draw (0,-\sideToCircle) -- (0,0);
    \draw (2,-\sideToCircle) -- (2,0);
    \draw (1,6) -- (1,6.5);
    \draw (1,-0.5) -- (1,0);
    
    \end{scope}

    \node at (15,3) {$\oplus$};

    \draw[->, decorate, decoration={snake}] (3,3) -- (5.5,3); 
    
    \draw[->] (9,3) -- (11.5,3) node[midway, above] {$\Psi$};

    
\end{tikzpicture}
    \caption{A braid diagram of 6 crossings (6 negative, 0 positive), one of the smoothings from $\{0,1\}^6$ and the two enhanced words corresponding to that smoothing. The dashed circles bounding the braids are omitted for clarity.}
    \label{Enhanced words diagram}
\end{figure}
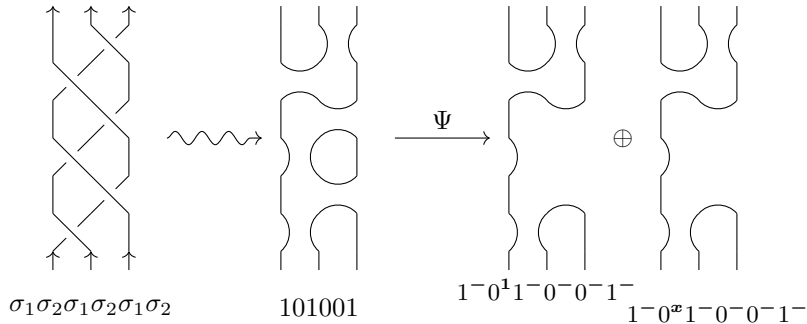

For a braid diagram $T$ with $k$ crossings  we call these $01$-tuples of length $k$ with superscripts \term{enhanced words} and mark the set of them with $\Enh(T)$. As sets, we can see the inclusion $\Enh(T)\subset Y^{k}$ where $Y=\{0^{\bx}, 0^{\bone},\allowbreak 0^{\bminus}, \allowbreak 1^{\bx}, 1^{\bone}, 1^{\bminus}\}$ is the set of symbols.
We can further make this set $\Enh(T)$ into a based bigraded complex $(\llbracket T \rrbracket_{\Enh}, \Tilde{d})$ by endowing the words with appropriate gradings and  by pulling  back the differentials $d$ of $\llbracket T \rrbracket$ via delooping isomorphism. In other words, we define $(\llbracket T \rrbracket_{\Enh})^i=\Psi (\llbracket T \rrbracket^i)$ and $\Tilde{d}=\Psi d \Psi^{-1}$ immediately yielding $\llbracket T \rrbracket \cong\llbracket T \rrbracket_{\Enh}$.

When employing discrete Morse theory, we need to investigate graphs  $G(\llbracket T \rrbracket_{\Enh}, M)$ and know which matrix elements of $\llbracket T \rrbracket_{\Enh}$ are zero morphisms and which are isomorphisms. Let $a$ and $b$ be enhanced words of $\llbracket T \rrbracket_{\Enh}$ and let $\Tilde{f}\colon a \to b$ be a matrix element pulled back from cobordism $f$ of $\llbracket T \rrbracket$. If $f$ merges a circle marked with superscript $\bone$ in $\llbracket T \rrbracket_{\Enh}$ or splits into a circle which is marked with $\bx$ in the codomain and if  $\Tilde{f}$  keeps otherwise  the words and superscripts intact, then $\Tilde{f}=\pm \operatorname{id}$. If $f$  merges two circles marked with $\bx$ or changes a marking on a circle not affected by the cobordism, then $\Tilde{f}$ is the zero morphism.    

To further analyze the structure of graphs $G(\llbracket T \rrbracket_{\Enh}, M)$ we define two combinatorial integer valued functions, $O$ and $L$. The function $O$ takes in an enhanced word $a$ and spits out the number of non-superscript 1:s before the first 0 in the word. The function $L$ takes in an edge $a\nearrow b$ (or $c\searrow d)$ and gives out the unique index in the enhanced words, for which $0$ changes to $1$ (or $1$ changes to $0$).

\section{Discrete Morse theory for 2-torus braids} \label{Section: DMT for T2}

In this section we investigate braid diagrams $\sigma_1^m$, $m\geq 0$, see Figure \ref{T2 braid figure} and their  Khovanov complexes $\llbracket\sigma_1^m\rrbracket\in \Kom(\Mat(\Cob(4)))$. The concluding result of this section is Proposition \ref{commutative squares of T2} which relates the Morse complexes of $\llbracket \sigma_1^m\rrbracket_{\Enh}$ and $\llbracket \sigma_1^{m+2}\rrbracket_{\Enh}$. A more explicit description of these complexes has been obtained earlier in \cite{KhovanovHomologyOfPrezelsRevisited}. With that in mind, the point of including this section is that we get to lay down our notation, but more importantly that we get to practice the method of Discrete Morse theory in a less combinatorially tedious setting compared to complexes $\llbracket (\sigma_1 \sigma_2)^k\rrbracket_{\Enh}$. Apart from explicit classification of paths, the complexes $\llbracket (\sigma_1 \sigma_2)^{k}\rrbracket_{\Enh}$ will be studied identically to the way $\llbracket \sigma_1^m\rrbracket_{\Enh}$ are examined.
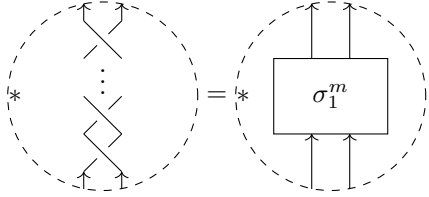
\begin{figure}[ht]
    \centering

\begin{tikzpicture}[scale=0.5]
  \begin{scope}[shift={(0,0)}]
        
  \draw[dashed] (0.5,0) circle (2.5);

\negCrossing{0,-2};

\negCrossing{0,-1};

\negCrossing{0,1};
   \node at (0.5,0.6) {$\vdots$};

  \draw[->] (1,2) -- (1,2.46);
  \draw[->] (0,2) -- (0,2.46);
  \draw[<-] (1,-2) -- (1,-2.46);
  \draw[<-] (0,-2) -- (0,-2.46);
  
  \end{scope}

  \begin{scope}[shift={(6,0)}]
        
  \draw[dashed] (0.5,0) circle (2.5);

  \draw (-1, -1) rectangle (2, 1);

    \node at (0.5,0) {$\sigma_1^m$};

  \draw[->] (1,1) -- (1,2.46);
  \draw[->] (0,1) -- (0,2.46);
  \draw[<-] (1,-1) -- (1,-2.46);
  \draw[<-] (0,-1) -- (0,-2.46);
  
  \end{scope}

    \node at (3.5,0) {$=$};
    
    \node at (-1.8,0) {$*$};
    \node at (4.2,0) {$*$};
  
\end{tikzpicture}
    \caption{Braid diagram of $\sigma_1^m$ with $m\geq 0$ drawn in two ways.  }
    \label{T2 braid figure}
\end{figure}

A smoothing of braid $\sigma_1^m$ contains closed loops between subsequent 0-smoothed crossings (which may have any amount of 1-smoothed crossings between them). Thus, the base of complex $\llbracket \sigma_1^m\rrbracket_{\Enh}$ consists of enhanced words of length $m$ with $\bone$ or $\bx$ superscripts over every 0 except for the last 0 and $\bminus$ superscripts elsewhere. 
In order to ease our notation, we omit $\bminus$ superscripts and only mark $\bone$ and $\bx$ superscripts. We define $M$, a set of matrix elements of $\llbracket \sigma_1^m \rrbracket_ {\Enh}$, by\footnote{Although we work with the explicit definition, this set $M$ could also be obtained by scanning through the braid and collecting isomorphisms between vertices, which have not been matched yet.} 
$$
M=\left\{z\nearrow x  \ \middle\vert  \begin{array}{c}
       z=1\dots1 0^{\bx}0^{\bx} \dots 0^{\bx} 0^{\bone} 0^s y_j \dots y_m \\ 
       x=1\dots1 0^{\bx}0^{\bx} \dots 0^{\bx} 0^{s} 1 \  y_j \dots y_m \\
       3\leq j \leq m+1,\ y_j,\dots, y_m \in Y,\ s\in \{\bone, \bx, \bminus \} 
\end{array}\right\}
$$
and observe that $M$ is finite, it contains only isomorphims and  it induces a partial matching  on $G=G(\llbracket\sigma_1^m \rrbracket_{\Enh}, M)$. We still need to show that $G$ has no directed cycles and to that end we need to prove a lemma and recall some notation. We write $a\nearrow b$ and $c\searrow d$ for edges in $G$ to indicate that $\hdeg(a)+1=\hdeg(b)$ and $\hdeg(c)-1=\hdeg(d)$ where $\hdeg$ denotes the homological degree. The function $L$ takes in an edge of $G$ and outputs the changing index of the enhanced words and $O$ takes in an enhanced word and gives the number of 1:s at the start of that word.


\begin{lemma}\label{T2 acyclicity/paths helper}
    Let $x\searrow z\nearrow x'\searrow z'$ be a zig-zag path in $G(\llbracket\sigma_1^m \rrbracket_{\Enh}, M)$ and assume $L(x\searrow z) > O(z)+2$. Then for some superscript $s\in \{\bx,\bone, \bminus\}$ and symbols $y_j,\dots y_m \in Y$ we have
    \begin{align*}
        z&=1\dots 1 0^{\bx}0^{\bx}\dots 0^{\bx} 0^{\bone} 0^s y_j\dots y_m \\
        x'&=1\dots 1 0^{\bx}0^{\bx}\dots 0^{\bx} 1 \ \, 0^s y_j\dots y_m \\
        z'&=1\dots 1 0^{\bx}0^{\bx}\dots 0^{\bone} 0^{\bx} 0^s y_j\dots y_m 
    \end{align*}
    and $L(x\searrow z)-1=L(x'\searrow z')$. Consequently, let $v\searrow w \nearrow v'$ be a zig-zag path in $G(\llbracket\sigma_1^m \rrbracket_{\Enh}, M)$ and assume $L(v\searrow w) = O(v)+2$ and $v'$ is not matched upwards. Then $O(v)+1=O(v')$.
\end{lemma}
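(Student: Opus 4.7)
My plan is to classify every edge $z\nearrow x'$ in $G$ leaving $z$ and then use the shape of $M$ to decide which targets $x'$ can continue the zig-zag — that is, are matched downward. I would first read off from $(z\nearrow x)\in M$ the form
\[
z = 1^p(0^{\bx})^q 0^{\bone} 0^s y_j\dots y_m,\quad L(x\searrow z)=p+q+2,\quad O(z)=p,
\]
so that the hypothesis $L(x\searrow z)>O(z)+2$ forces $q\ge 1$.

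Next I would examine every position of $z$ with an underlying $0$ and work out what the corresponding saddle cobordism does. Flipping $k = p+i$ for $2\le i\le q$ merges two adjacent $\bx$-circles, which is zero by the rule recorded in Section~\ref{Section: enhanced words of a braid diagram}. Flipping $k = p+1$ absorbs the $\bx$-circle $(p+1, p+2)$ into the bottom boundary arc; even if the matrix element is non-zero, the resulting word $x' = 1^{p+1}(0^{\bx})^{q-1} 0^{\bone} 0^s y_j\dots y_m$ already fits the \emph{domain} pattern of $M$ (a $\bone$ directly after the $\bx$-block followed by an $s$-block), so $x'$ is matched upward rather than downward. Flipping $k\ge p+q+3$ acts only on circles meeting crossing $k$, so the $0^{\bone}$ at position $p+q+1$ and the $0^s$ at position $p+q+2$ remain unchanged, forcing $x'$ into the matched-upward pattern as well. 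Finally $k = p+q+2$ recovers $x' = x$, which is forbidden in the zig-zag. Every case except $k = p+q+1$ is thus eliminated.

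The remaining case $k = p+q+1$ merges the $\bx$-circle $(p+q, p+q+1)$ with the $\bone$-circle $(p+q+1, p+q+2)$, a merge involving a $\bone$-marked circle, hence non-zero by the identity rule. The result is $x' = 1^p(0^{\bx})^q\,1\,0^s y_j\dots y_m$, which re-parsed as $1^p(0^{\bx})^{q-1}\,0^{\bx}\,1\,0^s y_j\dots y_m$ fits the \emph{codomain} pattern of $M$ with $s'=\bx$; inverting that $M$-arrow recovers $z' = 1^p(0^{\bx})^{q-1} 0^{\bone} 0^{\bx} 0^s y_j\dots y_m$, as claimed. Comparing the underlying $01$-words gives $L(x'\searrow z') = p+q+1 = L(x\searrow z) - 1$.

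For the "consequently" clause I would specialise the same analysis to $q = 0$, i.e.\ $w = 1^{p_w} 0^{\bone} 0^{s_w} y^{(w)}$. Flips inside $y^{(w)}$ leave the $0^{\bone} 0^{s_w}$-block intact and so place $v'$ in the domain pattern of $M$, making $v'$ matched upward — excluded by the hypothesis on $v'$. The only remaining flip is at position $p_w + 1$, which absorbs the $\bone$-circle into the bottom boundary arc and produces $v' = 1^{p_w+1} 0^{s_w} y^{(w)}$, whence $O(v') = p_w + 1 = O(v) + 1$. The main obstacle throughout is the locality check for flips in the $y$-tail: one must verify that such a saddle cannot touch the fixed $\bone$ at position $p+q+1$ (resp.\ $p_w + 1$), which is exactly what keeps $x'$ (resp.\ $v'$) in the matched-upward shape.
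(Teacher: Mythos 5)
Your proof is correct and takes essentially the same route as the paper: read the form of $z$ directly from the definition of $M$, then do a case analysis on $k=L(z\nearrow x')$ and eliminate every case except $k=L(x\searrow z)-1$ (zero morphism for merges of two $\bx$-circles, $x'$ matched upward for $k=O(z)+1$ or $k>L(x\searrow z)$, reversed $M$-arrow for $k=L(x\searrow z)$). The paper's published proof is terser — it states the case split and asserts the conclusions without writing out $x'$ and $z'$, and handles the ``consequently'' clause with a one-line ``proven similarly'' — whereas you spell out the resulting enhanced words and the $L(x'\searrow z')=L(x\searrow z)-1$ computation explicitly, which is a worthwhile addition (one small imprecision: for $k\ge p+q+3$ the superscript $s$ at position $p+q+2$ need not literally be unchanged, since the circle it marks can merge with the next one, but $x'$ still has the matched-upward shape $1^p(0^{\bx})^q 0^{\bone}0^{s'}\cdots$, so the conclusion stands).
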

\begin{proof}
The fact that $z$ is of this form, follows directly from the definition of $M$. The claim follows from showing every case except $L(z\nearrow x')=L(x \searrow z)-1$ to be impossible.

    Case $O(z)+1=L(z\nearrow x')<L(x \searrow z)-1$. From the definition of $M$  one can see that it is impossible for $x'$ to be matched downwards. 

    Case $O(z)+1<L(z\nearrow x')<L(x \searrow z)-1$. These differentials merge two circles with $\bx$ superscripts which means that the cobordisms are zero morphisms. Hence such edges $z\nearrow x'$ do not exist in $G$.

    Case $L(z\nearrow x')=L(x \searrow z)$. The corresponding arrow would be contained in $M$, so in $G$ there is no edge in this direction.    

    Case $L(z\nearrow x')>L(x \searrow z)$. This would lead $x'$ to be matched upwards violating $x'\searrow z'$.

    The second claim is proven similarly.
\end{proof}

\begin{lemma}
    The graph $G(\llbracket\sigma_1^m \rrbracket_{\Enh}, M)$ has no directed cycles and thus $M$ is a Morse matching.
\end{lemma}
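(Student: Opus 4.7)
My plan is to use the preceding helper lemma to set up a strict lexicographic monovariant on the $x_i$'s of a putative zig-zag cycle. Since $M$ has already been verified to be finite, to consist of isomorphisms in $\mathbf{C}$, and to induce a partial matching, acyclicity is the only remaining item in the Morse-matching checklist, and by the discussion in Section~2.1 it is enough to exclude zig-zag cycles. So I would suppose toward contradiction that
\[
z_1 \nearrow x_1 \searrow z_2 \nearrow x_2 \searrow \cdots \nearrow x_n \searrow z_1
\]
is a zig-zag cycle, with indices read modulo $n$ and $(z_{i+1}\nearrow x_i) \in M$.

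The key numerical observation is that every $M$-arrow satisfies $L(z_{i+1}\nearrow x_i) \geq O(z_{i+1}) + 2$, since by definition of $M$ the position flipped from $0$ to $1$ is the slot of $0^s$, which sits two steps past the leading $1$'s at the earliest. I would therefore assign each index the invariant
\[
\Phi_i \;:=\; \big(O(x_i),\ -L(x_i \searrow z_{i+1})\big) \in \mathbb{Z}^2
\]
under the lexicographic order, and argue $\Phi_i < \Phi_{i+1}$ for every $i$, which cascaded around the cycle yields the contradiction $\Phi_1 < \Phi_1$.

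The split into two cases is dictated by Lemma~\ref{T2 acyclicity/paths helper}. In Case~A, $L(x_i \searrow z_{i+1}) > O(z_{i+1}) + 2$: the first half of the lemma gives $L(x_{i+1}\searrow z_{i+2}) = L(x_i \searrow z_{i+1}) - 1$ and provides explicit normal forms showing that $z_{i+1}$ and $z_{i+2}$ share the same leading $1^a$-block. Since in both rows the flipped position lies strictly beyond the leading $1$'s, none of $O(z_{i+1})$, $O(z_{i+2})$, $O(x_i)$, $O(x_{i+1})$ move, so the first coordinate of $\Phi$ is preserved while the second strictly increases. In Case~B, $L(x_i \searrow z_{i+1}) = O(z_{i+1}) + 2$: the $M$-arrow has no $0^{\bx}$-prefix, which forces $x_i = 1^{O(z_{i+1})} 0^s 1 \ldots$ and hence $O(x_i) = O(z_{i+1})$, so the hypothesis $L = O(v)+2$ is met with $v = x_i$. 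Because $x_{i+1}$ is matched downwards by $(z_{i+2} \nearrow x_{i+1})\in M$, it is not matched upwards, and the second half of Lemma~\ref{T2 acyclicity/paths helper} applies with $w = z_{i+1}$ and $v' = x_{i+1}$, giving $O(x_{i+1}) = O(x_i) + 1$ and so a strict jump in the first coordinate of $\Phi$.

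The substantive combinatorial work has already been packaged inside Lemma~\ref{T2 acyclicity/paths helper}, so I expect the above reduction to go through cleanly. The only place where I would take extra care is converting the threshold ``$L = O(z)+2$'' appearing naturally from the shape of $M$ into the threshold ``$L = O(v)+2$'' needed to invoke the second half of the helper lemma; the reason this conversion is permissible is precisely that the absence of a $0^{\bx}$-prefix in an $M$-arrow matches the leading $1$'s of $z_{i+1}$ and $x_i$ exactly.
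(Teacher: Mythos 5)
Your proof is correct and follows essentially the same route as the paper: the paper defines a strict preorder on $M$-arrows by comparing $(O(x),-L)$ lexicographically and shows it strictly increases along consecutive downward edges of a zig-zag via Lemma~\ref{T2 acyclicity/paths helper}, which is exactly your monovariant $\Phi_i$ and your two cases. Your explicit remark on why the threshold $L = O(z_{i+1})+2$ transfers to $L = O(x_i)+2$ (because a zero-length $0^{\bx}$-block forces $O(x_i)=O(z_{i+1})$) fills in a step the paper leaves implicit.
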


\begin{proof}
    Recall that a binary relation is a \term{strict preorder} if it is irreflexive and transitive. We impose a strict preorder $\prec$ on $M$ by assigning  to arrows $(a\nearrow b),(a'\nearrow b')\in M$ the following:
$$
\begin{array}{cl}
   O(b)<O(b')  & \implies (a\nearrow b)\prec(a'\nearrow b') \\[2pt]
     \left(\begin{array}{c}
O(b)=O(b')        \\
 L(a\nearrow b)>L(a'\nearrow b')      
\end{array}  \right)& \implies (a\nearrow b)\prec(a'\nearrow b').
\end{array}
$$
Suppose that $x\searrow z \nearrow x' \searrow z'$ is a zig-zag path in $G(\llbracket\sigma_1^m \rrbracket_{\Enh}, M)$, so that $(z\nearrow x),(z' \nearrow x')\in M$. It follows from the definition of $M$, that $L(x\searrow z)\geq O(z)+2$. Thus, from  Lemma \ref{T2 acyclicity/paths helper} we obtain $(z\nearrow x) \prec(z' \nearrow x')$. Since $\prec$ is a strict preorder, we can deduce that $G$ has no directed cycles.
\end{proof}
 We have now shown that $M$ is a Morse matching and hence by Theorem \ref{Discrete Morse theory} the complex $\llbracket \sigma_1^m \rrbracket_{\Enh}$ is chain homotopic to the Morse complex $M\llbracket \sigma_1^m \rrbracket_{\Enh}$ shown in Figure \ref{Morse complex of T2}. 

\begin{figure}[ht]
    \centering
    \begin{tikzpicture}[scale=0.9]

    \begin{scope}[shift={(12.5,0)},rotate=90]
    \lineRightBendUp{0,0}
    \lineRightBendDown{0,1}
    \draw[dashed] (0.5, 0.5) circle (0.707);

    \node at (0.5,1.1) {$*$};
    \end{scope}

    \begin{scope}[shift={(2.5,0)}]
    \lineRightBendUp{0,0}
    \lineRightBendDown{0,1}
    \draw[dashed] (0.5, 0.5) circle (0.707);
    \node at (0.35,1.5) {\small$-m;-3m+1$};
    
    \node at (-0.1,0.5) {$*$};
    \end{scope}

    \begin{scope}[shift={(5,0)}]
    \lineRightBendUp{0,0}
    \lineRightBendDown{0,1}
    \draw[dashed] (0.5, 0.5) circle (0.707);
    \node at (0.65,1.5) {\small$-m+1;-3m+3$};
    
    \node at (-0.1,0.5) {$*$};
    \end{scope}

    \begin{scope}[shift={(9,0)}]
    \lineRightBendUp{0,0}
    \lineRightBendDown{0,1}
    \draw[dashed] (0.5, 0.5) circle (0.707);
    \node at (0.5,1.5) {\small$-1;-m-1$};
    
    \node at (-0.1,0.5) {$*$};
    \end{scope}

    \node(first0) at (1.25,0.5) {$0$};
    \node(middle dots) at (7.5,0.5) {$\cdots$};
    \node(last0) at (13.75,0.5) {$0$};
    \node at (12,1.5) {\small$0;-m$};
    
    \draw[->] (first0) -- (2,0.5);
    \draw[->] (4,0.5) -- (4.5,0.5);
    \draw[->] (6.5,0.5) -- (middle dots);
    \draw[->] (middle dots) -- (8.5,0.5);
    \draw[->] (10.5,0.5) -- (11,0.5);
    \draw[->] (13,0.5) -- (last0);

\end{tikzpicture}

    \caption{Morse complex $M\llbracket \sigma_1^m\rrbracket_{\Enh}$. The homological gradings and the internal grading shifts are written above the chain spaces and separated by semicolons.  }
    \label{Morse complex of T2}
\end{figure}
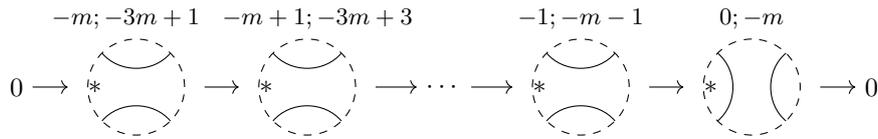

 \subsection{Adding a full twist to 2-torus braids}
 
 Next up, we shall investigate what happens to Morse complexes $M\llbracket \sigma_1^m \rrbracket_{\Enh}$ when we go from $\sigma_1^m$ to $\sigma_1^{m+2}$, that is, when adding a full twist to the braid. The following graph-theoretic lemma gives us sufficient information about the differentials of the complexes.

\begin{lemma}\label{classifying the paths for T2}
    Let $z$ and $x$ be the unique unmatched cells of homological degrees $t$ and $t+1$, i.e. $z=1\dots 10^{\bx} 0^{\bx} \dots 0^{\bx} 0$ and $x=1\dots 11 0^{\bx} \dots 0^{\bx} 0$. There are exactly two paths $\alpha_1,\alpha_2\colon z\to x$. The path $\alpha_1=(z\nearrow x)$ consists of one edge whereas
    \begin{align*}
\alpha_2=&(1...10^{\bx}0^{\bx}...^{\bx}0\nearrow 1...10^{\bx}...^{\bx}01 \searrow 1...10^{\bx}0^{\bx}...^{\bx}0^{\bone}0\nearrow 1...10^{\bx}...^{\bx}0^{\bx} 10\searrow \dots \\
&\ldots \searrow1...10^{\bx}0^{\bone}0^{\bx}...^{\bx}0 \nearrow 1...10^{\bx}10^{\bx}...^{\bx}00 \searrow 1...10^{\bone}0^{\bx}...^{\bx}00\nearrow 1...110^{\bx}...^{\bx}0). 
\end{align*}
In case $t=-1$ implying $x=1\dots 1$, the paths $\alpha_1$ and $\alpha_2$ coincide so there is only one path $\alpha\colon z\to y$ consisting of a single edge.

\end{lemma}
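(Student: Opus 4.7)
The plan is to classify all zig-zag paths $z\to x$ in $G(\llbracket \sigma_1^m\rrbracket_{\Enh}, M)$ by combining a direct case analysis of the first up-edge out of $z$ with iterative applications of Lemma~\ref{T2 acyclicity/paths helper} to force everything that follows.

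Since $z$ is unmatched, any such path begins with an up-edge $z\nearrow y_1$, and I would enumerate these by flipping each $0$-position in $z = 1^k 0^{\bx}\ldots 0^{\bx} 0$. For an interior flip at position $j$ with $k+2\leq j\leq m-1$, the saddle merges two adjacent $\bx$-labeled loops, giving the zero morphism and hence no edge in $G$. Only two options remain: flipping position $k+1$, which merges the $\bx$-loop $(k+1,k+2)$ into the bottom path and lands in $x$ directly, and flipping position $m$, which merges the $\bx$-loop $(m-1,m)$ into the top path and lands in $y_1 = 1^k 0^{\bx}\ldots 0^{\bx} 0 1$. The first option yields precisely $\alpha_1$; since $x$ is critical, this path admits no further extension.

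For the second option I would show every subsequent step is forced. First, $y_1$ is the target of exactly one $M$-arrow, namely from $y_2 = 1^k 0^{\bx}\ldots 0^{\bx} 0^{\bone} 0$ (with $j=m+1$ and $s=\bminus$), so the next step must be $y_1\searrow y_2$. From this point Lemma~\ref{T2 acyclicity/paths helper} applies iteratively to the zig-zag triples $y_{2i-1}\searrow y_{2i}\nearrow y_{2i+1}\searrow y_{2i+2}$: one checks $L(y_{2i-1}\searrow y_{2i}) = m-i+1 > O(y_{2i})+2 = k+2$ for $i=1,\dots, m-k-2$, and each application forces $y_{2i+1}$ and $y_{2i+2}$ to be exactly the cells listed in $\alpha_2$, shifting the $\bone$-marker one step to the left each time. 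When $L$ drops to $k+2$, the ``consequently'' part of the helper applies to the last triple and yields an unmatched vertex with $O$-value $k+1 = O(x)$; as $x$ is the unique critical cell at the relevant degree with this $O$-value, the path terminates at $x$, producing precisely $\alpha_2$.

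The main obstacle supporting the use of Lemma~\ref{T2 acyclicity/paths helper} is a dead-end analysis at each intermediate $y_{2i}$: besides the forced up-edge (flipping the $\bone$ at position $m-i$), the remaining non-zero up-edges -- flipping position $k+1$ or position $m$ -- yield cells whose enhanced word carries a $\bone$ superscript inside the prefix portion of the $M$-target pattern $1\ldots 1 0^{\bx}\ldots 0^{\bx} 0^s 1 Y$. Such cells are not matched as targets of $M$ and hence have no down-edge in $G$; since they differ from $x$, they cannot complete a valid path to $x$. Finally, the special case $k=m-1$ (equivalently $t=-1$, $x=1\ldots 1$) is immediate: $z = 1\ldots 1 0$ has a single $0$-position, so the two classifications collapse to the single edge $z\nearrow x$.
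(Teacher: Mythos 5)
Your proof is correct and takes essentially the same route as the paper's, which compresses the argument into two sentences: identify the two non-zero up-edges out of $z$, then invoke Lemma~\ref{T2 acyclicity/paths helper} to force the continuation. You spell out both halves more carefully — the initial enumeration (interior flips at positions $k+2,\dots,m-1$ merge two $\bx$-circles and give the zero morphism, leaving only positions $k+1$ and $m$), and the inductive forcing step with the explicit degree and $L$-value bookkeeping. The one genuine addition over what the paper says explicitly is your dead-end analysis: you observe that at each intermediate $y_{2i}$ the non-forced non-zero up-edges land on cells carrying a $\bone$ superscript which are therefore neither targets of $M$-arrows (so the zig-zag cannot continue) nor equal to the critical cell $x$ (so the path cannot terminate there). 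The paper leaves the first of these facts implicit inside the case analysis of Lemma~\ref{T2 acyclicity/paths helper}'s proof and does not mention the second at all, so your version is a welcome clarification rather than a divergence. Two tiny imprecisions worth noting but not affecting correctness: at $y_2$ (the case $i=1$) the flip at position $m$ is the $M$-arrow itself and therefore not an up-edge in $G$, so "flipping position $m$" is only a dead-end candidate for $i\ge 2$; and at the terminal step the identification of $v'$ with $x$ follows most directly from observing that increasing $O$ by $1$ forces the flip at position $k+1$, which lands on $x$, rather than from uniqueness of the critical cell per se.
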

\begin{proof}
    There are exactly two edges (non-zero morphisms) that start from $z$ and these will turn out to correspond to paths $\alpha_1$ and $\alpha_2$. After this initial step, every subsequent edge will be determined by Lemma \ref{T2 acyclicity/paths helper}
\end{proof}

\begin{proposition}\label{commutative squares of T2}
    For $t\geq -m$ there are isomorphisms $\psi_t$ between the chain spaces making the diagrams 
    $$
\begin{tikzcd}
(M\llbracket \sigma_1^m \rrbracket_{\Enh})^t \arrow[r,"\psi_t"] \arrow[d] & (M\llbracket \sigma_1^{m+2} \rrbracket_{\Enh}\{2\})^{t} \arrow[d] \\
(M\llbracket \sigma_1^m \rrbracket_{\Enh})^{t+1} \arrow[r, "\psi_{t+1}"]       & (M\llbracket \sigma_1^{m+2} \rrbracket_{\Enh}\{2\})^{t+1}          
\end{tikzcd}
    $$
commute. Consequently, for $t\leq -1$ there are isomorphisms $\varphi_t$ making the diagrams
$$
\begin{tikzcd}
(M\llbracket \sigma_1^m \rrbracket_{\Enh})^{t-1} \arrow[r, "\varphi_{t-1}"] \arrow[d] & (M\llbracket \sigma_1^{m+2} \rrbracket_{\Enh}[2]\{6\})^{t-1} \arrow[d] \\
(M\llbracket \sigma_1^m \rrbracket_{\Enh})^{t} \arrow[r, "\varphi_{t}"]       & (M\llbracket \sigma_1^{m+2} \rrbracket_{\Enh}[2]\{6\})^{t}      
\end{tikzcd}
    $$
     commute. 
\end{proposition}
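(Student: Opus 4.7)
The plan is to apply Proposition \ref{Morse comparison with natural transformation} to each family of squares. The hypotheses reduce to matching critical cells together with their bigradings, constructing a functor between the categories of critical paths, and exhibiting a natural transformation.

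From the explicit description of $M$, the critical cells of $M\llbracket\sigma_1^m\rrbracket_{\Enh}$ are precisely $1^{t+m}\,0^{\bx}\cdots 0^{\bx}\,0$ for $-m\leq t\leq -1$ together with $1^m$ at $t=0$; each of these is, after delooping, either a cup--cap pair or two vertical strands in $\Mat(\Cob(4))$. A direct computation of the shift formula $\{k+n_+-2n_-\}$ combined with the $-1$ contributed by each $0^{\bx}$ during delooping shows that the internal grading of the critical cell at degree $t$ agrees in $M\llbracket\sigma_1^m\rrbracket_{\Enh}$ and in $M\llbracket\sigma_1^{m+2}\rrbracket_{\Enh}\{2\}$ for $-m\leq t\leq 0$, and in $M\llbracket\sigma_1^m\rrbracket_{\Enh}$ and in $M\llbracket\sigma_1^{m+2}\rrbracket_{\Enh}[2]\{6\}$ for $-m\leq t\leq -1$. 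On critical cells I therefore define $\psi_t$ and $\varphi_t$ to be the identity cobordism on the common underlying object.

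For the $\psi_t$-family, I would take the functor $F$ on the category of critical paths to prepend two symbols $1$ to every enhanced word along a zig-zag path, translating each edge accordingly. By Lemma \ref{classifying the paths for T2} there are at most two paths between consecutive critical cells, $\alpha_1$ and $\alpha_2$, whose vertices all begin with the requisite string of $1$'s, so $F$ is well-defined and bijects critical cells and morphisms between them. The inserted $1$'s never serve as the changing index of any edge, so the number of reversed edges is preserved exactly, and since each position is shifted by $+2$, every sign $(-1)^r$ (with $r$ the number of $1$'s before the changing index) is multiplied by $(-1)^2=1$. The natural transformation $\gamma$ is the cobordism that adds two vertical strands, and naturality is immediate because every saddle or delooping adjustment along a path is supported away from the inserted region.

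The $\varphi_t$-family is handled in the same way, with $F$ now inserting $0^{\bx}\,0^{\bx}$ immediately before the terminal bare $0$, and with $\gamma$ given by the cobordism that creates two extra $\bx$-marked circles. Since all inserted positions come strictly after every position altered along a path in Lemma \ref{classifying the paths for T2}, signs are unaffected and the bijection of paths with their reversed-edge parities is automatic. Invoking Proposition \ref{Morse comparison with natural transformation} twice then yields both families of commutative squares.

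The main obstacle I foresee is the verification that $F$ sends each $\alpha_2$ exactly to the corresponding long zig-zag in $\sigma_1^{m+2}$: although the recipe in Lemma \ref{classifying the paths for T2} is symmetric enough that insertion at the beginning or at the end preserves the shape, the intermediate vertices of $\alpha_2$ carry a $\bone$-superscript migrating through the $0$-block, and a careful check is required that these migrate identically under $F$. This reduces to a repeated application of Lemma \ref{T2 acyclicity/paths helper}, and is essentially bookkeeping.
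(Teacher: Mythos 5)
Your handling of the $\psi_t$-family matches the paper exactly: prepend $11$, take $\gamma_c = \operatorname{id}$, and invoke Proposition~\ref{Morse comparison with natural transformation} via Lemma~\ref{classifying the paths for T2}.

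For the $\varphi_t$-family you depart from the paper: you insert $0^{\bx}0^{\bx}$ immediately before the terminal bare $0$ (the right end of the $0^{\bx}$-block), whereas the paper's functor $A$ inserts it before the first $0$ (the left end, right after the run of $1$'s). On critical cells both give the same image, since the $0^{\bx}$'s are indistinguishable, and both can be made to work. However, your stated justification is incorrect. Write $z=1^a(0^{\bx})^b 0$ and follow $\alpha_2$: the first edge $z\nearrow v_1$ flips the terminal bare $0$ at index $m$, and after your insertion that index becomes $m+2$, which lies \emph{past} the inserted block at indices $m,m+1$. So it is false that ``all inserted positions come strictly after every position altered along a path.'' Indeed, on vertices your $F$ sends $v_1\mapsto v_1'$ but $z_1 = 1^a(0^{\bx})^{b-1}0^{\bone}0 \mapsto 1^a(0^{\bx})^{b-1}0^{\bone}(0^{\bx})^2 0 = z_3'$, and $v_1'$ and $z_3'$ are not adjacent: the edge $v_1\searrow z_1$ is forced to map to the five-edge zig-zag $v_1'\searrow z_1'\nearrow v_2'\searrow z_2'\nearrow v_3'\searrow z_3'$. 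The elongation is unavoidable---$\alpha_2$ has $2b+1$ edges while $\alpha_2'$ has $2b+5$---and the paper handles it explicitly via the $q_1,\dots,q_4$ path attached to the last edge $z_b\nearrow x$. With your insertion point the stretch moves to the second edge, but it still has to be exhibited and its parity and naturality checked; it is not ``automatic.'' Your final paragraph concedes that ``a careful check is required,'' which is the right instinct, but it is inconsistent with the earlier automaticity claim and the check is not carried out. Note also that the sign preservation you invoke is true, but for a different reason than the position ordering: the inserted characters are $0$'s, so the count of $1$'s preceding any changing index is unchanged regardless of where in the $0$-block the insertion occurs.
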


\begin{proof}
    Both of the claims are proven with Proposition \ref{Morse comparison with natural transformation}. For the first diagram we assume $t\geq -m$ and define functor $F$ with 
    $$
    F\colon \mathbf{CP}(\llbracket \sigma_1^m \rrbracket_{\Enh}, M, t)\to \mathbf{CP}(\llbracket \sigma_1^{m+2} \rrbracket_{\Enh}\{2\}, M, t), \  
    F(y_1\dots y_m)=11y_1\dots y_m
    $$
    and on edges with $F(a\to b) =(F(a)\to F(b))$. The planar diagrams of $a$ and $F(a)$ are the same up to which means that we can define a natural transformation 
    $$
    \gamma\colon R_m\circ I_{m} \Rightarrow R_{m+2} \circ I_{m+2} \circ F,  \quad \gamma_c=\operatorname{id}_{R_m(c)}
    $$ 
    where $I_m$ and $R_m$ are the inclusion and remembering functors assosiated to $M\llbracket \sigma_1^m \rrbracket_{\Enh}$ and $I_{m+2}$, $R_{m+2}$ are assosiated to $M\llbracket \sigma_1^{m+2} \rrbracket_{\Enh} \{2\}$. Lemma \ref{classifying the paths for T2} help us to verify that all conditions are met for Proposition \ref{Morse comparison with natural transformation}, yielding the first diagram. 

    For the second diagram, we assume $t\leq -2$ and define functor
    $$
    A\colon \mathbf{CP}(\llbracket \sigma_1^m \rrbracket_{\Enh}, M, t)\to \mathbf{CP}(\llbracket \sigma_1^{m+2} \rrbracket_{\Enh}[2]\{6\}, M, t)
    $$
    which on enhanced words injects a subword $0^{\bx}0^{\bx}$ before the first zero:
    $$
    A(1\dots 1 0^s y_j\dots y_m)=1\dots 1 0^{\bx}0^{\bx}0^s y_j\dots y_m.
    $$
    For an edge $a\to b$ with $L(a\to b)> O(a)+1$ we assign $A(a\to b)=(A(a)\to A(b))$. For the edge $c\nearrow d$ with 
\begin{align}\label{nontrivial path for functor A in T2}
    c=1\dots10^{\bone}0^{\bx}\dots 0^{\bx}0\quad \text{and} \quad d= 1\dots110^{\bx}\dots 0^{\bx}0      
\end{align}
we put $A(c\nearrow d)=(A(c)\nearrow q_1\searrow q_2\nearrow q_3 \searrow q_4\nearrow A(d) )$ where
\begin{alignat*}{5}
    q_1&=1\dots 1&&0^{\bx}&&0^{\bx}&&1&&0^{\bx}\dots 0^{\bx}0 \\
    q_2&=1\dots 1&&0^{\bx}&&0^{\bone}&&0^{\bx}&&0^{\bx}\dots 0^{\bx} 0\\
    q_3&=1\dots 1&&0^{\bx}&&1&&0^{\bx}&&0^{\bx}\dots 0^{\bx} 0 \\
    q_4&=1\dots 1&&0^{\bone}&&0^{\bx}&&0^{\bone}&&0^{\bx}\dots 0^{\bx}0. 
\end{alignat*}
From Lemma \ref{classifying the paths for T2} one can see that this is enough to define $A$ and that $A$ meets requirements of 
Proposition \ref{Morse comparison with natural transformation} apart from naturality.
   
To meet the naturality condition, we again define $\eta_c =\operatorname{id}_{R_m(c)}$ to get
$$
    \eta\colon R_m\circ I_{m} \Rightarrow R_{m+2} \circ I_{m+2} \circ F.
$$ Verifying that this is a natural transformation amounts to checking that for all edges  $e\colon v\to w$, we have $R_m(e)=R_{m+2}(Ae)$. This is straightforward for edges $a\to b$ with $L(a\to b)> O(a)+1$ and for edges $c\nearrow d$ of Form \ref{nontrivial path for functor A in T2} it follows from the fact that all relevant edges correspond to $\pm\operatorname{id}$. Additionally one needs to check that the signs of all of these cobordisms match, which they do.    \end{proof}

\section{Discrete Morse theory for 3-torus braids}\label{Section: DMT for T3}

In this section we investigate braid diagrams $(\sigma_1\sigma_2)^k$ with $k\geq 0$, see Figure \ref{Braid diagram of T3} and their Khovanov complexes $\llbracket(\sigma_1\sigma_2)^k\rrbracket\in \Kom( \Mat( \Cob(6)))$. The main objectives of the section are Propositions \ref{first commutative square of T3} and \ref{second commutative square of T3} which are analogous to Proposition \ref{commutative squares of T2}.

A circle in a smoothing of $(\sigma_1\sigma_2)^k$ occurs between two 0-smoothed crossings which have an odd number of 1-smoothed crossings in between them. Thus the cells of complex $\llbracket (\sigma_1 \sigma_2)^k\rrbracket_{\Enh}$ are enhanced words of length $2k$ with $\bx$ and $\bone$ superscripts on 0:s which are followed up by an odd number of 1:s and a 0.  The rest of the 0:s and 1:s have $\bminus$ superscripts, which we again omit.

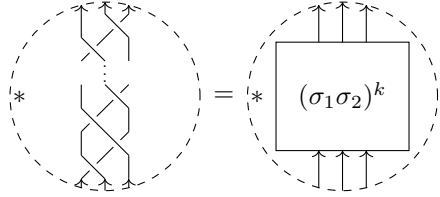
\begin{figure}[ht]
    \centering
    
\begin{tikzpicture}[scale=0.3125]

    \negCrossing{0,0}
    \lineUp{2,0}
    \negCrossing{1,1}
    \lineUp{0,1}
    \negCrossing{0,2}
    \lineUp{2,2}
    \negCrossing{1,3}
    \lineUp{0,3}

    \node[font=\small] at (1,4.7) {$\scalebox{0.7}{\vdots}$};

    \negCrossing{0,5}
    \lineUp{2,5}
    \negCrossing{1,6}
    \lineUp{0,6}

    \draw[<-] (1,0) -- (1,-0.5);
    \draw[->] (1,7) -- (1,7.5);
    
    \draw[->] (0,7) -- (0,7.38);
    
    \draw[->] (2,7) -- (2,7.38);
    \draw[<-] (0,0) -- (0,-0.38);
    \draw[<-] (2,0) -- (2,-0.38);

    \draw[dashed] (1, 3.5) circle (4);

    \node at (-2.6,3.5) {$*$};
    \node at (7.4,3.5) {$*$};

    \node at (6,3.5) {$=$};


    \begin{scope}[shift={(1,0)}]
        
    \draw[dashed] (10, 3.5) circle (4);

    \draw (12.8, 5.8) rectangle (7.2, 1.2);

    \node at (10,3.5) {$(\sigma_1\sigma_2)^k$};

    \draw[<-] (9,1.2) -- (9,-0.38);
    \draw[<-] (11,1.2) -- (11,-0.38);
    
    \draw[->] (9,5.8) -- (9,7.38);
    \draw[->] (11,5.8) -- (11,7.38);

    \draw[<-] (10,1.2) -- (10,-0.5);
    \draw[->] (10,5.8) -- (10,7.5);
    
    \end{scope}

\end{tikzpicture}

    \caption{Braid diagram of $(\sigma_1\sigma_2)^k$ with $k\geq 0$ drawn in two ways.  }
    \label{Braid diagram of T3}
\end{figure}

This time, the matching $M$ on $\llbracket(\sigma_1\sigma_2)^k\rrbracket_{\Enh}$ is defined\footnote{Once more, the set $M$ could also be obtained by scanning through the braid and collecting isomorphisms between vertices, which have not been matched yet.}  by a mouthful
$$
M=\left\{
\begin{array}{l}
     z_1\nearrow x_1,  \\[2pt]
     z_2\nearrow x_2, \\[2pt]
     z_3\nearrow x_3
\end{array}
\ \middle\vert  
\begin{array}{l}
       z_1=1\dots1 00^{\bx}100^{\bx}1\dots 00^{\bx}1 0^{\bone} 10^s y_j \dots y_{2k} \\
       x_1=1\dots1 00^{\bx}100^{\bx}1\dots 00^{\bx}1 0^{s} 11 y_j \dots y_{2k} \\[2pt]
       z_2=1\dots1 00^{\bx}100^{\bx}1\dots 00^{\bx}1 00^{\bone} 10^s y_j \dots y_{2k} \\
       x_2=1\dots1 00^{\bx}100^{\bx}1\dots 00^{\bx}1 00^{s} 11 y_j \dots y_{2k} \\[2pt]
       z_3=1\dots1 00^{\bx}100^{\bx}1\dots 00^{\bx}1 000^s y_j \dots y_{2k} \\
       x_3=1\dots1 00^{\bx}100^{\bx}1\dots 00^{\bx}1 0^{\bx} 10 y_j \dots y_{2k} \\[2pt]
       4\leq j \leq 2k+1,\ y_j,\dots,y_{2k} \in Y,\ s\in \{\bone, \bx, \bminus \} 
\end{array}\right\}
$$
where one should notice that the length of each of the three parts $$1\dots1, \qquad 00^{\bx}100^{\bx}1\dots 00^{\bx}1, \qquad y_j\dots y_{2k}$$ can be zero (as long as the total length stays $2k$). Again, it is relatively straightforward to see that $M$ is finite,  it consists of isomorphims cobordisms and it induces a partial matching  on $G=G(\llbracket (\sigma_1 \sigma_2)^k\rrbracket_{\Enh}, M)$. To conclude that $M$ is a Morse matching, we still need acyclicity which is again proven by constructing a strict preorder.

\begin{lemma}
    The graph $G(\llbracket (\sigma_1 \sigma_2)^k\rrbracket_{\Enh}, M)$ has no directed cycles and thus $M$ is a Morse matching.
\end{lemma}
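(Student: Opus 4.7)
The plan is to follow the 2-torus template: impose a strict preorder $\prec$ on $M$ and, with the help of a graph-theoretic statement analogous to Lemma \ref{T2 acyclicity/paths helper}, show that every zig-zag continuation $x \searrow z \nearrow x' \searrow z'$ with $(z \nearrow x), (z' \nearrow x') \in M$ satisfies $(z \nearrow x) \prec (z' \nearrow x')$. Once this monotonicity is established, the finiteness of $M$ together with the irreflexivity of $\prec$ rule out directed zig-zag cycles, which is all one needs to check since $M$ is a partial matching.

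To set up the preorder, I would first isolate the common shape of any matched vertex: every such $z$ has a prefix of the form $1\ldots 1\,(00^{\bx}1)\cdots(00^{\bx}1)$, whose length I will call $P(z)$, followed by one of three trigger configurations corresponding to the three types $z_1, z_2, z_3$. Define
\[
(z \nearrow x) \prec (z' \nearrow x') \iff P(z) < P(z'),\ \text{or}\ P(z) = P(z')\ \text{and}\ L(z \nearrow x) > L(z' \nearrow x').
\]
Here $P$ plays the role that $O$ did in the 2-torus argument, while $L$ serves as the secondary tiebreaker exactly as before.

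The helper lemma itself would case-analyze the zig-zag $x \searrow z \nearrow x' \searrow z'$ according to the type of $(z \nearrow x)$ and the location of $L(x \searrow z)$ relative to the prefix and the trigger of $z$. As in the 2-torus argument, most potential positions for $L(z \nearrow x')$ are eliminated because either the associated cobordism merges two $\bx$-marked circles (so the arrow is the zero morphism and does not appear in $G$) or the resulting $x'$ is itself matched upward (contradicting $x' \searrow z'$). The legitimate cases produce a $z'$ whose prefix is at least as long as that of $z$, so $P(z') \geq P(z)$, and when $P(z') = P(z)$ the position $L(z' \nearrow x')$ is strictly smaller than $L(z \nearrow x)$. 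A short boundary case, analogous to the second claim of Lemma \ref{T2 acyclicity/paths helper} and corresponding to $L(x \searrow z)$ sitting right at the trigger of $z$, forces $P(z') > P(z)$ instead.

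The main obstacle is the case analysis in the helper lemma. Because $M$ partitions into three qualitatively different types with distinct trigger patterns, and because the prefix already has nontrivial internal structure from the repeating $00^{\bx}1$ blocks, there are substantially more ways for the downward arrow $x \searrow z$ to act than in the 2-torus case. In particular, one must check that an $x \searrow z$ falling inside one of these blocks either yields an $x'$ already matched upward or reshapes the trigger in a way that strictly extends the prefix of $z'$, and one must verify that each of the three trigger types continues into each of the three trigger types consistently with the preorder. Once these combinatorial cases are dispatched, the concluding argument mirrors the 2-torus proof verbatim and yields acyclicity of $G(\llbracket(\sigma_1\sigma_2)^k\rrbracket_{\Enh}, M)$.
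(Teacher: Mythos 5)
Your preorder does not actually satisfy the monotonicity you assert, and the specific claim ``the legitimate cases produce a $z'$ whose prefix is at least as long as that of $z$'' is false. The problem is that $P(z)$ incorporates the count of $(00^{\bx}1)$ blocks, and a zig-zag step can convert blocks into leading $1$'s, shrinking the prefix even as $O$ grows. Concretely, take $(z\nearrow x)\in M$ of type $z_3$ with $z = 1^a(00^{\bx}1)^t 000^s y_j\cdots y_{2k}$, $t\geq 1$, so $P(z)=a+3t$. The edge $z\nearrow x'$ with $L(z\nearrow x')=a+1$ is a genuine edge of $G$ (crossing $a+1$ bounds no circle in the smoothing of $z$, so the saddle is nonzero and does not touch any $\bx$-circle). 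Then $x' = 1^{a+1}\,0^{\bx}1\,(00^{\bx}1)^{t-1}\,000^s y_j\cdots$, which is precisely of the form $x_3$ with $a'=a+1$ and $t''=0$, so $x'$ is matched downward to a $z'=1^{a+1}000^{s''}\cdots$ of type $z_3$ with $P(z')=a+1$. Since $t\geq1$ we get $P(z)=a+3t\geq a+3 > a+1 = P(z')$, so $P$ strictly decreases along a legitimate zig-zag step and your $\prec$ fails.

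The paper's preorder avoids this by taking $O(z)$ (number of leading $1$'s) as the primary invariant, which does increase by one in the scenario above. The secondary ordering is then genuinely asymmetric between the two families of matched arrows: with $O$ fixed, $L$ must \emph{increase} within the $z_3$-type arrows but \emph{decrease} within the $z_1,z_2$-type arrows, and a $z_3\to z_{1,2}$ transition is allowed but not the reverse. Your uniform tiebreaker ``$L(z\nearrow x) > L(z'\nearrow x')$'' therefore cannot be made to work either, because in terms of $L$ the two families run in opposite directions. A correct proof needs both the coarser primary invariant $O$ and a type-sensitive secondary order, as in the paper.
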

\begin{proof}
    Denote $M_{12}$ as the subset of $M$ corresponding to arrows $0^{\bone}10^s\nearrow 0^{s}11$ (arrows $z_1\nearrow x_1$ and $z_2\nearrow x_2$ in the definition of $M$) and denote $M_3=M\setminus M_{12}$. We define strict preorder $\prec$ on $M$ by assigning $(a\nearrow b)\prec (a'\nearrow b')$ if one of the following mutually exclusive conditions hold:
    \begin{enumerate}
        \item $O(a)<O(a')$\label{T3 preorder condition 1}
        \item $O(a)=O(a')$, $(a\nearrow b)\in M_3$ and $(a'\nearrow b')\in M_{12}$ \label{T3 preorder condition 2}
        \item $O(a)=O(a')$, $(a\nearrow b),(a'\nearrow b')\in M_{12}$ and $L(a\nearrow b)> L(a'\nearrow b')$ \label{T3 preorder condition 3}
        \item $O(a)=O(a')$, $(a\nearrow b),(a'\nearrow b')\in M_{3}$ and $L(a\nearrow b)< L(a'\nearrow b')$. \label{T3 preorder condition 4}
    \end{enumerate}

    Let $x\searrow z \nearrow x' \searrow z'$ be a zig-zag path, meaning $(z\nearrow x),(z' \nearrow x')\in M$. The goal, $(z\nearrow x)\prec(z' \nearrow x')$, is obtained by splitting to several cases. Assume first, that $(z\nearrow x)\in M_{3}$ which implies 
    $$
    z=1\dots1 00^{\bx}100^{\bx}1\dots 00^{\bx}1 000^s y_{j} \dots y_{2k}
    $$
    where $j=L(x\searrow z)+2$.  

    Case: $L(z\nearrow x')=O(z)+1$. By Condition \ref{T3 preorder condition 1} we have $(z\nearrow x)\prec(z' \nearrow x')$.

    Case: $O(z)+1 < L(z\nearrow x')< L(x\searrow z)$. Then $(z'\nearrow x')\in M_{12}$ implying $(z\nearrow x)\prec(z' \nearrow x')$ by Condition \ref{T3 preorder condition 2}.

    Case: $L(z\nearrow x')= L(x\searrow z)$. There are two choices for the superscript of the subword of $x'$  corresponding to indices $L(z\nearrow x')-1,L(z\nearrow x'), L(z\nearrow x')+1$: either $0^{\bx} 10^s$ or $0^{\bone} 10^s$. The subword $0^{\bx} 10^s$ is impossible since it implies $x=x'$ and in  $G$ arrow is $z\nearrow x$ goes in the wrong direction. The subword $0^{\bone} 10^s$ is also impossible as then $x'$ would be matched up in homological degree, whereas $x' \searrow  z'$ compels $x'$ to be matched down.

    Case: $L(z\nearrow x')> L(x\searrow z)$. We get $(z\nearrow x)\prec(z' \nearrow x')$ either from Condition \ref{T3 preorder condition 2} or \ref{T3 preorder condition 4}.
    
    The proof for case $(z\nearrow x)\in M_{12}$ is similar.
\end{proof}

    Now that we have proven $M$ to be a Morse matching, we obtain a chain homotopic Morse complex $M\llbracket (\sigma_1 \sigma_2)^k\rrbracket_{\Enh}$ for every $k$. A combinatorial investigation shows that the critical cells are enhanced words of the following forms
\begin{align*}
        &1\dots 100^{\bx}100^{\bx}1\dots 00^{\bx}1 0\\ 
    &1\dots 100^{\bx}100^{\bx}1\dots 00^{\bx}101\\ 
    &1\dots 100^{\bx}100^{\bx}1\dots 00^{\bx}1001 \\ 
    &1\dots 100^{\bx}100^{\bx}1\dots 00^{\bx}100 \\ 
    &1\dots 1 
\end{align*}
    and that there are at most 2 cells in each homological level of $M\llbracket (\sigma_1\sigma_2)^k \rrbracket_{\Enh}$. 

    \subsection{Adding a full twist to 3-torus braids}
    
    From the Propositions \ref{first commutative square of T3} and \ref{second commutative square of T3} one will be able to see that an analogous phenomenon is happening in  Morse complexes of 3-torus braids as happened in the 2-torus braids. The Morse complex $M\llbracket (\sigma_1 \sigma_2)^{3k}\rrbracket_{\Enh}$ and its similarity to $M\llbracket (\sigma_1 \sigma_2)^{3k+3}\rrbracket_{\Enh}$ are illustrated in Figure \ref{Figure for T3 complex and similarity}. The appearance and behaviour of complexes $M\llbracket (\sigma_1 \sigma_2)^{3k+1}\rrbracket_{\Enh}$ and $M\llbracket (\sigma_1 \sigma_2)^{3k+2}\rrbracket_{\Enh}$ is analogous.  
    
    In the proofs that follow, we will use the shorthand $G_{k,t}$ for the  subgraph of  $G(\llbracket(\sigma_1\sigma_2)^k\rrbracket_{\Enh},M)$ induced by restricting to vertices of homological degree $t$ and $t+1$.
    
    

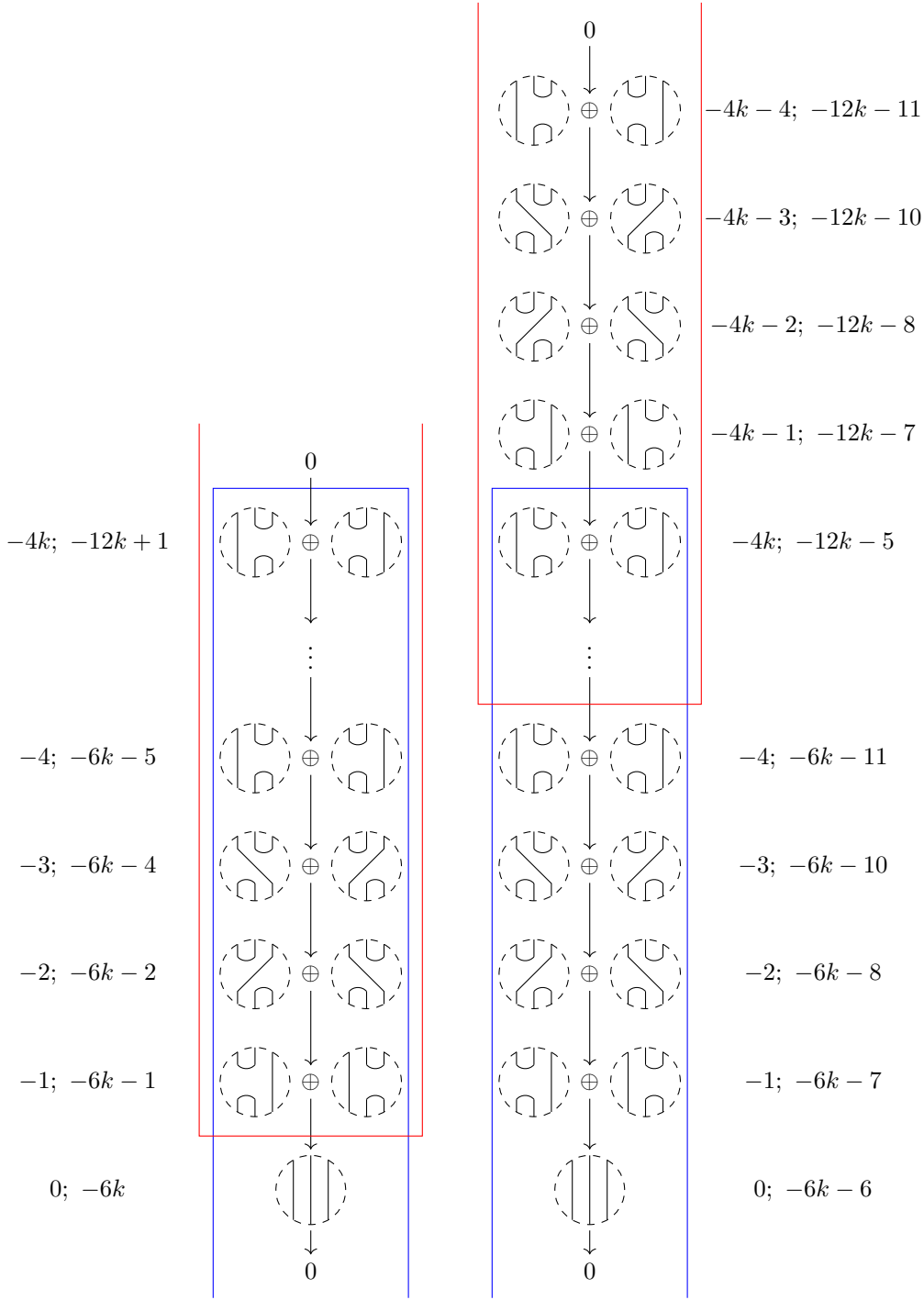
\begin{figure}
    \centering

    \begin{tikzpicture}
\pgfmathsetmacro{\cbound}{1.732}
\pgfmathsetmacro{\scalef}{0.25}
\pgfmathsetmacro{\yscalef}{0.78}
\pgfmathsetmacro{\zf}{3.2}

    \begin{scope}[shift={(0,0*\yscalef)}, scale=\scalef, xscale=1]
        \draw[dashed] (0,0) circle (2);
        \draw(-1,-\cbound) -- (-1,\cbound);
        \draw(1,-\cbound) -- (1,\cbound);
        \draw(0,-2) -- (0,2);
    \end{scope}  

    \begin{scope}[shift={(4,0*\yscalef)}, scale=\scalef, xscale=1]
        \draw[dashed] (0,0) circle (2);
        \draw(-1,-\cbound) -- (-1,\cbound);
        \draw(1,-\cbound) -- (1,\cbound);
        \draw(0,-2) -- (0,2);
    \end{scope}

    \begin{scope}[shift={(-0.8,2*\yscalef)}, scale=\scalef, xscale=-1]
        \draw[dashed] (0,0) circle (2);
        
        \draw (0,-2) -- (0,-1);
        \lineRightBendUp{0,-1}
        \draw (1,-\cbound) -- (1,-1);
        
        \draw (0,2) -- (0,1);
        \lineRightBendDown{0,1}
        \draw (1,1) -- (1,\cbound);
        
        \draw(-1,-\cbound) -- (-1,\cbound);
    \end{scope} 

    \begin{scope}[shift={(0.8,2*\yscalef)}, scale=\scalef, xscale=1]
        \draw[dashed] (0,0) circle (2);
        
        \draw (0,-2) -- (0,-1);
        \lineRightBendUp{0,-1}
        \draw (1,-\cbound) -- (1,-1);
        
        \draw (0,2) -- (0,1);
        \lineRightBendDown{0,1}
        \draw (1,1) -- (1,\cbound);
        
        \draw(-1,-\cbound) -- (-1,\cbound);
    \end{scope}

    \begin{scope}[shift={(4-0.8,2*\yscalef)}, scale=\scalef, xscale=-1]
        \draw[dashed] (0,0) circle (2);
        
        \draw (0,-2) -- (0,-1);
        \lineRightBendUp{0,-1}
        \draw (1,-\cbound) -- (1,-1);
        
        \draw (0,2) -- (0,1);
        \lineRightBendDown{0,1}
        \draw (1,1) -- (1,\cbound);
        \draw(-1,-\cbound) -- (-1,\cbound);
    \end{scope}    

    \begin{scope}[shift={(4+0.8,2*\yscalef)}, scale=\scalef, xscale=1]
        \draw[dashed] (0,0) circle (2);
        
        \draw (0,-2) -- (0,-1);
        \lineRightBendUp{0,-1}
        \draw (1,-\cbound) -- (1,-1);
        
        \draw (0,2) -- (0,1);
        \lineRightBendDown{0,1}
        \draw (1,1) -- (1,\cbound);
        
        \draw(-1,-\cbound) -- (-1,\cbound);
    \end{scope}

    \begin{scope}[shift={(4-0.8,4*\yscalef)}, scale=\scalef, xscale=1]
        \draw[dashed] (0,0) circle (2);
        
        \draw (0,-2) -- (0,-1);
        \lineRightBendUp{0,-1}
        \draw (1,-\cbound) -- (1,-1);
        
        \draw (0,2) -- (0,1);
        \lineRightBendDown{-1,1}
        \draw (-1,1) -- (-1,\cbound);

        \draw (-1,-\cbound) -- (-1,-1) -- (1,1) -- (1,\cbound);
    \end{scope}

    \begin{scope}[shift={(4+0.8,4*\yscalef)}, scale=\scalef, xscale=-1]
        \draw[dashed] (0,0) circle (2);
        
        \draw (0,-2) -- (0,-1);
        \lineRightBendUp{0,-1}
        \draw (1,-\cbound) -- (1,-1);
        
        \draw (0,2) -- (0,1);
        \lineRightBendDown{-1,1}
        \draw (-1,1) -- (-1,\cbound);

        \draw (-1,-\cbound) -- (-1,-1) -- (1,1) -- (1,\cbound);
    \end{scope}

    \begin{scope}[shift={(-0.8,4*\yscalef)}, scale=\scalef, xscale=1]
        \draw[dashed] (0,0) circle (2);
        
        \draw (0,-2) -- (0,-1);
        \lineRightBendUp{0,-1}
        \draw (1,-\cbound) -- (1,-1);
        
        \draw (0,2) -- (0,1);
        \lineRightBendDown{-1,1}
        \draw (-1,1) -- (-1,\cbound);

        \draw (-1,-\cbound) -- (-1,-1) -- (1,1) -- (1,\cbound);
    \end{scope}

    \begin{scope}[shift={(0.8,4*\yscalef)}, scale=\scalef, xscale=-1]
        \draw[dashed] (0,0) circle (2);
        
        \draw (0,-2) -- (0,-1);
        \lineRightBendUp{0,-1}
        \draw (1,-\cbound) -- (1,-1);
        
        \draw (0,2) -- (0,1);
        \lineRightBendDown{-1,1}
        \draw (-1,1) -- (-1,\cbound);

        \draw (-1,-\cbound) -- (-1,-1) -- (1,1) -- (1,\cbound);
    \end{scope}

    \begin{scope}[shift={(4-0.8,6*\yscalef)}, scale=\scalef, xscale=-1]
        \draw[dashed] (0,0) circle (2);
        
        \draw (0,-2) -- (0,-1);
        \lineRightBendUp{0,-1}
        \draw (1,-\cbound) -- (1,-1);
        
        \draw (0,2) -- (0,1);
        \lineRightBendDown{-1,1}
        \draw (-1,1) -- (-1,\cbound);

        \draw (-1,-\cbound) -- (-1,-1) -- (1,1) -- (1,\cbound);
    \end{scope}

    \begin{scope}[shift={(4+0.8,6*\yscalef)}, scale=\scalef, xscale=1]
        \draw[dashed] (0,0) circle (2);
        
        \draw (0,-2) -- (0,-1);
        \lineRightBendUp{0,-1}
        \draw (1,-\cbound) -- (1,-1);
        
        \draw (0,2) -- (0,1);
        \lineRightBendDown{-1,1}
        \draw (-1,1) -- (-1,\cbound);

        \draw (-1,-\cbound) -- (-1,-1) -- (1,1) -- (1,\cbound);
    \end{scope}

    \begin{scope}[shift={(-0.8,6*\yscalef)}, scale=\scalef, xscale=-1]
        \draw[dashed] (0,0) circle (2);
        
        \draw (0,-2) -- (0,-1);
        \lineRightBendUp{0,-1}
        \draw (1,-\cbound) -- (1,-1);
        
        \draw (0,2) -- (0,1);
        \lineRightBendDown{-1,1}
        \draw (-1,1) -- (-1,\cbound);

        \draw (-1,-\cbound) -- (-1,-1) -- (1,1) -- (1,\cbound);
    \end{scope}

    \begin{scope}[shift={(0.8,6*\yscalef)}, scale=\scalef, xscale=1]
        \draw[dashed] (0,0) circle (2);
        
        \draw (0,-2) -- (0,-1);
        \lineRightBendUp{0,-1}
        \draw (1,-\cbound) -- (1,-1);
        
        \draw (0,2) -- (0,1);
        \lineRightBendDown{-1,1}
        \draw (-1,1) -- (-1,\cbound);

        \draw (-1,-\cbound) -- (-1,-1) -- (1,1) -- (1,\cbound);
    \end{scope}

    \begin{scope}[shift={(-0.8,8*\yscalef)}, scale=\scalef, xscale=1]
        \draw[dashed] (0,0) circle (2);
        
        \draw (0,-2) -- (0,-1);
        \lineRightBendUp{0,-1}
        \draw (1,-\cbound) -- (1,-1);
        
        \draw (0,2) -- (0,1);
        \lineRightBendDown{0,1}
        \draw (1,1) -- (1,\cbound);
        
        \draw(-1,-\cbound) -- (-1,\cbound);
    \end{scope} 

    \begin{scope}[shift={(0.8,8*\yscalef)}, scale=\scalef, xscale=-1]
        \draw[dashed] (0,0) circle (2);
        
        \draw (0,-2) -- (0,-1);
        \lineRightBendUp{0,-1}
        \draw (1,-\cbound) -- (1,-1);
        
        \draw (0,2) -- (0,1);
        \lineRightBendDown{0,1}
        \draw (1,1) -- (1,\cbound);
        
        \draw(-1,-\cbound) -- (-1,\cbound);
    \end{scope}

    \begin{scope}[shift={(4-0.8,8*\yscalef)}, scale=\scalef, xscale=1]
        \draw[dashed] (0,0) circle (2);
        
        \draw (0,-2) -- (0,-1);
        \lineRightBendUp{0,-1}
        \draw (1,-\cbound) -- (1,-1);
        
        \draw (0,2) -- (0,1);
        \lineRightBendDown{0,1}
        \draw (1,1) -- (1,\cbound);
        
        \draw(-1,-\cbound) -- (-1,\cbound);
    \end{scope}    

    \begin{scope}[shift={(4+0.8,8*\yscalef)}, scale=\scalef, xscale=-1]
        \draw[dashed] (0,0) circle (2);
        
        \draw (0,-2) -- (0,-1);
        \lineRightBendUp{0,-1}
        \draw (1,-\cbound) -- (1,-1);
        
        \draw (0,2) -- (0,1);
        \lineRightBendDown{0,1}
        \draw (1,1) -- (1,\cbound);
        
        \draw(-1,-\cbound) -- (-1,\cbound);
    \end{scope}

    \begin{scope}[shift={(-0.8,12*\yscalef)}, scale=\scalef, xscale=1]
        \draw[dashed] (0,0) circle (2);
        
        \draw (0,-2) -- (0,-1);
        \lineRightBendUp{0,-1}
        \draw (1,-\cbound) -- (1,-1);
        
        \draw (0,2) -- (0,1);
        \lineRightBendDown{0,1}
        \draw (1,1) -- (1,\cbound);
        
        \draw(-1,-\cbound) -- (-1,\cbound);
    \end{scope} 

    \begin{scope}[shift={(0.8,12*\yscalef)}, scale=\scalef, xscale=-1]
        \draw[dashed] (0,0) circle (2);
        
        \draw (0,-2) -- (0,-1);
        \lineRightBendUp{0,-1}
        \draw (1,-\cbound) -- (1,-1);
        
        \draw (0,2) -- (0,1);
        \lineRightBendDown{0,1}
        \draw (1,1) -- (1,\cbound);
        
        \draw(-1,-\cbound) -- (-1,\cbound);
    \end{scope}

    \begin{scope}[shift={(4-0.8,12*\yscalef)}, scale=\scalef, xscale=1]
        \draw[dashed] (0,0) circle (2);
        
        \draw (0,-2) -- (0,-1);
        \lineRightBendUp{0,-1}
        \draw (1,-\cbound) -- (1,-1);
        
        \draw (0,2) -- (0,1);
        \lineRightBendDown{0,1}
        \draw (1,1) -- (1,\cbound);
        
        \draw(-1,-\cbound) -- (-1,\cbound);
    \end{scope}    

    \begin{scope}[shift={(4+0.8,12*\yscalef)}, scale=\scalef, xscale=-1]
        \draw[dashed] (0,0) circle (2);
        
        \draw (0,-2) -- (0,-1);
        \lineRightBendUp{0,-1}
        \draw (1,-\cbound) -- (1,-1);
        
        \draw (0,2) -- (0,1);
        \lineRightBendDown{0,1}
        \draw (1,1) -- (1,\cbound);
        
        \draw(-1,-\cbound) -- (-1,\cbound);
    \end{scope}

    \begin{scope}[shift={(4-0.8,16*\yscalef)}, scale=\scalef, xscale=1]
        \draw[dashed] (0,0) circle (2);
        
        \draw (0,-2) -- (0,-1);
        \lineRightBendUp{0,-1}
        \draw (1,-\cbound) -- (1,-1);
        
        \draw (0,2) -- (0,1);
        \lineRightBendDown{-1,1}
        \draw (-1,1) -- (-1,\cbound);

        \draw (-1,-\cbound) -- (-1,-1) -- (1,1) -- (1,\cbound);
    \end{scope}

    \begin{scope}[shift={(4+0.8,16*\yscalef)}, scale=\scalef, xscale=-1]
        \draw[dashed] (0,0) circle (2);
        
        \draw (0,-2) -- (0,-1);
        \lineRightBendUp{0,-1}
        \draw (1,-\cbound) -- (1,-1);
        
        \draw (0,2) -- (0,1);
        \lineRightBendDown{-1,1}
        \draw (-1,1) -- (-1,\cbound);

        \draw (-1,-\cbound) -- (-1,-1) -- (1,1) -- (1,\cbound);
    \end{scope}

    \begin{scope}[shift={(4-0.8,18*\yscalef)}, scale=\scalef, xscale=-1]
        \draw[dashed] (0,0) circle (2);
        
        \draw (0,-2) -- (0,-1);
        \lineRightBendUp{0,-1}
        \draw (1,-\cbound) -- (1,-1);
        
        \draw (0,2) -- (0,1);
        \lineRightBendDown{-1,1}
        \draw (-1,1) -- (-1,\cbound);

        \draw (-1,-\cbound) -- (-1,-1) -- (1,1) -- (1,\cbound);
    \end{scope}

    \begin{scope}[shift={(4+0.8,18*\yscalef)}, scale=\scalef, xscale=1]
        \draw[dashed] (0,0) circle (2);
        
        \draw (0,-2) -- (0,-1);
        \lineRightBendUp{0,-1}
        \draw (1,-\cbound) -- (1,-1);
        
        \draw (0,2) -- (0,1);
        \lineRightBendDown{-1,1}
        \draw (-1,1) -- (-1,\cbound);

        \draw (-1,-\cbound) -- (-1,-1) -- (1,1) -- (1,\cbound);
    \end{scope}

    \begin{scope}[shift={(4-0.8,14*\yscalef)}, scale=\scalef, xscale=-1]
        \draw[dashed] (0,0) circle (2);
        
        \draw (0,-2) -- (0,-1);
        \lineRightBendUp{0,-1}
        \draw (1,-\cbound) -- (1,-1);
        
        \draw (0,2) -- (0,1);
        \lineRightBendDown{0,1}
        \draw (1,1) -- (1,\cbound);
        
        \draw(-1,-\cbound) -- (-1,\cbound);
    \end{scope}    

    \begin{scope}[shift={(4+0.8,14*\yscalef)}, scale=\scalef, xscale=1]
        \draw[dashed] (0,0) circle (2);
        
        \draw (0,-2) -- (0,-1);
        \lineRightBendUp{0,-1}
        \draw (1,-\cbound) -- (1,-1);
        
        \draw (0,2) -- (0,1);
        \lineRightBendDown{0,1}
        \draw (1,1) -- (1,\cbound);
        
        \draw(-1,-\cbound) -- (-1,\cbound);
    \end{scope}

    \begin{scope}[shift={(4-0.8,20*\yscalef)}, scale=\scalef, xscale=1]
        \draw[dashed] (0,0) circle (2);
        
        \draw (0,-2) -- (0,-1);
        \lineRightBendUp{0,-1}
        \draw (1,-\cbound) -- (1,-1);
        
        \draw (0,2) -- (0,1);
        \lineRightBendDown{0,1}
        \draw (1,1) -- (1,\cbound);
        
        \draw(-1,-\cbound) -- (-1,\cbound);
    \end{scope}    

    \begin{scope}[shift={(4+0.8,20*\yscalef)}, scale=\scalef, xscale=-1]
        \draw[dashed] (0,0) circle (2);
        
        \draw (0,-2) -- (0,-1);
        \lineRightBendUp{0,-1}
        \draw (1,-\cbound) -- (1,-1);
        
        \draw (0,2) -- (0,1);
        \lineRightBendDown{0,1}
        \draw (1,1) -- (1,\cbound);
        
        \draw(-1,-\cbound) -- (-1,\cbound);
    \end{scope}

    \node(A) at (0,2*\yscalef) {$\oplus$};
    \node(B) at (0,4*\yscalef) {$\oplus$};
    \node(C) at (0,6*\yscalef) {$\oplus$};
    \node(D) at (0,8*\yscalef) {$\oplus$};
    \node(E) at (0,10*\yscalef) {$\vdots$};
    \node(F) at (0,12*\yscalef) {$\oplus$};
    \node(G) at (4,2*\yscalef) {$\oplus$};
    \node(H) at (4,4*\yscalef) {$\oplus$};
    \node(I) at (4,6*\yscalef) {$\oplus$};
    \node(J) at (4,8*\yscalef) {$\oplus$};
    \node(K) at (4,10*\yscalef) {$\vdots$};
    \node(L) at (4,12*\yscalef) {$\oplus$};
    \node(M) at (4,14*\yscalef) {$\oplus$};
    \node(N) at (4,16*\yscalef) {$\oplus$};
    \node(O) at (4,18*\yscalef) {$\oplus$};
    \node(P) at (4,20*\yscalef) {$\oplus$};

    \foreach \x [remember=\x as \prevx (initially A)] in {B, C, D, E, F}
        \draw[<-] (\prevx) -- (\x);
    
    \foreach \x [remember=\x as \prevx (initially G)] in {H, I, J, K, L, M, N, O, P}
        \draw[<-] (\prevx) -- (\x);

    \draw[->] (A) -- (0,0.75*\yscalef);    
    \draw[->] (G) -- (4,0.75*\yscalef);


    \node [align=center] at (-\zf, 0*\yscalef) {$ 0 ; \ -6k$};    
    \node [align=center] at (-\zf, 2*\yscalef) {$ -1 ; \ -6k-1$};
    \node [align=center] at (-\zf, 4*\yscalef) {$ -2 ; \ -6k-2$};
    \node [align=center] at (-\zf, 6*\yscalef) {$ -3 ; \ -6k-4$};
    \node [align=center] at (-\zf, 8*\yscalef) {$ -4 ; \ -6k-5$};
    \node [align=center] at (-\zf, 12*\yscalef) {$ -4k ; \ -12k+1$};

    \node [align=center] at (4+\zf, 0*\yscalef) {$ 0 ; \ -6k-6$};
    \node [align=center] at (4+\zf, 2*\yscalef) {$ -1 ; \ -6k-7$};
    \node [align=center] at (4+\zf, 4*\yscalef) {$ -2 ; \ -6k-8$};
    \node [align=center] at (4+\zf, 6*\yscalef) {$ -3 ; \ -6k-10$};
    \node [align=center] at (4+\zf, 8*\yscalef) {$ -4 ; \ -6k-11$};
    \node [align=center] at (4+\zf, 12*\yscalef) {$ -4k ; \ -12k-5$};
    \node [align=center] at (4+\zf, 14*\yscalef) {$ -4k-1 ; \ -12k-7$};
    \node [align=center] at (4+\zf, 16*\yscalef) {$ -4k-2 ; \ -12k-8$};
    \node [align=center] at (4+\zf, 18*\yscalef) {$ -4k-3 ; \ -12k-10$};
    \node [align=center] at (4+\zf, 20*\yscalef) {$ -4k-4 ; \ -12k-11$};
    
    \draw[color=blue] (-1.4,-2*\yscalef) -- (-1.4, \yscalef*13) -- (1.4, \yscalef*13) -- (1.4,-2*\yscalef);
    
    \draw[color=blue] (4-1.4,-2*\yscalef) -- (4-1.4, \yscalef*13) -- (4+1.4, \yscalef*13) -- (4+1.4,-2*\yscalef);

    \draw[color=red] (-1.6,\yscalef*14.2) -- (-1.6, \yscalef*1) -- (1.6, \yscalef*1) -- (1.6,\yscalef*14.2);

    \draw[color=red] (4-1.6,\yscalef*22) -- (4-1.6, \yscalef*9) -- (4+1.6, \yscalef*9) -- (4+1.6,\yscalef*22);

    \node(swzero) at (0,-\yscalef*1.5) {$0$};
    \node(sezero) at (4,-\yscalef*1.5) {$0$};
    \node(nwzero) at (0,\yscalef*13.5) {$0$};
    \node(nezero) at (4,\yscalef*21.5) {$0$};

    \draw[->] (nwzero) -- (F); 
    \draw[->] (nezero) -- (P); 
    \draw[<-] (swzero) -- (0,-0.75*\yscalef);    
    \draw[<-] (sezero) -- (4,-0.75*\yscalef);
        
    
\end{tikzpicture}
\thisfloatpagestyle{plain}

    \caption{Morse complexes $M\llbracket(\sigma_1 \sigma_2)^{3k}\rrbracket_{\Enh}$ (left) and $M\llbracket(\sigma_1 \sigma_2)^{3k}\rrbracket_{\Enh}$ (right). The homological gradings and internal grading shifts are displayed on the sides and separated with semicolons. The marked points $(*)$ at 9 o'clock on each circle are omitted for clarity.  Up to a grading shift, the blue and red parts are the same by Propositions \ref{first commutative square of T3} and \ref{second commutative square of T3}.
    On a sidenote, one can see that the complexes $M\llbracket(\sigma_1 \sigma_2)^{k}\rrbracket_{\Enh}$ are minimal by decategorifying them. The Euler characteristics of these complexes (which take values in the Temperley-Lieb planar algebra of $\Z[q^{\pm}]$-modules see e.g. Theorem 5.2. from \cite{MORRISON2008}) have the same number of monomials, as the complexes themselves have summands.}
    \label{Figure for T3 complex and similarity}
\end{figure}


\begin{proposition}\label{first commutative square of T3}
    
    For $t\geq -\lfloor 4k/3\rfloor$ there are isomorphisms $\psi_t$ making the following diagrams commute
   \[
\begin{tikzcd}
{(M\llbracket(\sigma_1\sigma_2)^k\rrbracket_{\Enh})^{t}} \arrow[r,"\psi_t"] \arrow[d] & {(M\llbracket(\sigma_1\sigma_2)^{k+3}\rrbracket_{\Enh}\{6\})^{t}}  \arrow[d] \\
 {(M\llbracket(\sigma_1\sigma_2)^k\rrbracket_{\Enh})^{t+1}} \arrow[r,"\psi_{t+1}"]            & {(M\llbracket(\sigma_1\sigma_2)^{k+3}\rrbracket_{\Enh}\{6\})^{t+1}}.               
\end{tikzcd}
\]
\end{proposition}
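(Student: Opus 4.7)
The plan is to mirror the proof of Proposition \ref{commutative squares of T2} by invoking Proposition \ref{Morse comparison with natural transformation}. First I would define the functor
\[
F \colon \mathbf{CP}(\llbracket(\sigma_1\sigma_2)^k\rrbracket_{\Enh}, M, t) \to \mathbf{CP}(\llbracket(\sigma_1\sigma_2)^{k+3}\rrbracket_{\Enh}\{6\}, M, t)
\]
on objects by prepending six ones, $F(y_1 \dots y_{2k}) = 111111\,y_1 \dots y_{2k}$, and on edges by $F(a \to b) = (F(a) \to F(b))$. The prepended block corresponds to 1-smoothing all six crossings of an added bottom $(\sigma_1\sigma_2)^3$; tracing strands shows that this all-1 smoothing is planar-isotopic to the identity tangle on three strands, so the underlying planar diagrams of $c$ and $F(c)$ are canonically identified as objects of $\Cob(6)$. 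The $\{6\}$ shift on the codomain compensates exactly for the six additional ones in the internal grading.

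Next I would verify the hypotheses of Proposition \ref{Morse comparison with natural transformation}. The matching $M$ only constrains the tail of an enhanced word, so prepending $111111$ sends matched cells to matched cells and critical cells to critical cells, and likewise preserves the nature of edges. No new circles appear from the prepended ones (a circle requires a pair of zeros separated by an odd number of ones), so all superscripts in $c$ carry over verbatim. The assumption $t \geq -\lfloor 4k/3 \rfloor$ guarantees that every critical cell of $M\llbracket(\sigma_1\sigma_2)^{k+3}\rrbracket_{\Enh}$ in homological degree $t$ has at least six leading ones, so $F$ is a bijection on critical cells in the relevant range.

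To establish the bijection of morphisms, I would prove an analog of Lemma \ref{classifying the paths for T2}, classifying all zig-zag paths between critical cells of homological degrees $t$ and $t+1$ in $G_{k,t}$. A helper lemma analogous to Lemma \ref{T2 acyclicity/paths helper}, now tracking the three families of matching arrows in $M$ separately, should pin down how each zig-zag propagates one step at a time. Because prepending $111111$ leaves the tail of every enhanced word unchanged, each path in the smaller complex lifts uniquely to one in the larger complex with the same sequence of moves, hence the same number of reversed edges (in particular the same parity), and $F$ realises this bijection.

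Finally, I would set $\gamma_c = \operatorname{id}_{R(c)}$ as the natural transformation $R(\llbracket(\sigma_1\sigma_2)^k\rrbracket_{\Enh}, M)\circ I_{\mathcal C} \Rightarrow R(\llbracket(\sigma_1\sigma_2)^{k+3}\rrbracket_{\Enh}\{6\}, M)\circ I_{\mathcal D}\circ F$. Naturality amounts to checking $R(e) = R(Fe)$ for each edge $e$, which holds because all relevant cobordisms act on the tail of the word while the six prepended ones contribute only an identity cobordism; sign conventions are verified as in the T2 case. Proposition \ref{Morse comparison with natural transformation} then delivers the claimed commutative square. I expect the main obstacle to be the explicit path-classification analog of Lemma \ref{classifying the paths for T2}, since the T3 matching has three types of arrows rather than one and the critical cells come in five distinct forms, so the case analysis tracking each local step of a zig-zag will be substantially more elaborate.
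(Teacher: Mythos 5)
Your construction of $F$, your verification that it bijects critical cells, and your choice of natural transformation $\gamma_c=\operatorname{id}_{R(c)}$ all match the paper's proof. The difference is the middle step: where you plan to establish the bijection $\operatorname{Mor}(x,y)\cong\operatorname{Mor}(F(x),F(y))$ via a global path-classification analog of Lemma \ref{classifying the paths for T2}, the paper takes a much shorter route. Since every matching arrow in $M$ and every non-zero differential touches positions at or after the first $0$ of an enhanced word, and the prepended block $111111$ contains no zeros, the map $f(y_1\ldots y_{2k})=111111y_1\ldots y_{2k}$ is an injective graph homomorphism whose out-neighborhoods biject: for any vertex $a$ in $G_{k,t}$ the out-neighbors of $f(a)$ in $G_{k+3,t}$ are exactly the $f$-images of the out-neighbors of $a$. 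This local fact alone forces the bijection on morphism sets (by peeling off edges one at a time), with no need to know what the paths actually look like. So the elaborate case analysis you anticipate — and rightly flag as costly, given $M$ has three families of arrows and there are five forms of critical cells — is unnecessary for this proposition. The paper saves that combinatorial machinery (Lemma \ref{Stack of statements about critical paths}) for Proposition \ref{second commutative square of T3}, where the functor $A$ inserts $0^{\bx}0^{\bx}$ into the \emph{middle} of the word and genuinely breaks the out-neighborhood correspondence. Your closing observation that each path lifts uniquely with the same sequence of moves is precisely the right idea; it just follows directly from the local structure rather than from a full classification.
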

\begin{proof}
    Let $t\geq -\lfloor 4k/3\rfloor$ and define an injective graph homomorphism $f\colon G_{k,t}\to G_{k+3,t}$ by
    $$
    f(y_1,\dots, y_{2k})=111111y_1,\dots, y_{2k}. 
    $$
    where $y_j\in Y$ for all $j$. The map $f$ sends critical cells to critical cells bijectively and for any $a\in G_{k,t}$ it defines a bijection 
    $$
    \{b\in G_{k,t} \mid \exists r\colon a\to b \} \to \{b\in G_{k+3,t} \mid \exists r\colon f(a)\to b \}
    $$
    by $b\mapsto f(b)$. Hence $f$ descends to a functor 
    $$
    F\colon \mathbf{CP}(\llbracket(\sigma_1\sigma_2)^{k}\rrbracket_{\Enh},M,t)\to \mathbf{CP}(\llbracket(\sigma_1\sigma_2)^{k+3}\rrbracket_{\Enh}\{6\},M,t)
    $$
    for which we have bijections $\operatorname{Mor}(x,y)\cong \operatorname{Mor}(F(x),F(y))$.
    The result can now be obtained by defining a natural transformation
    $$
    \gamma\colon R_k\circ I_{k} \Rightarrow R_{k+3} \circ I_{k+3} \circ F, \quad \gamma_c=\operatorname{id}_{R_k(c)}
    $$ 
    and  employing Proposition \ref{Morse comparison with natural transformation}.
\end{proof}

\begin{proposition}\label{second commutative square of T3}
       For $t\leq -1$ there are isomorphisms $\varphi_t$ making the following diagrams commute
   \[
\begin{tikzcd}
{(M\llbracket(\sigma_1\sigma_2)^{k}\rrbracket_{\Enh})^{t-1}} \arrow[r,"\varphi_{t-1}"] \arrow[d] & {(M\llbracket(\sigma_1\sigma_2)^{k+3}\rrbracket_{\Enh}[4]\{12\})^{t-1}}  \arrow[d] \\
 {(M\llbracket(\sigma_1\sigma_2)^{k}\rrbracket_{\Enh})^{t}} \arrow[r,"\varphi_{t}"]            & {(M\llbracket(\sigma_1\sigma_2)^{k+3}\rrbracket_{\Enh}[4]\{12\})^{t}}.               
\end{tikzcd}
\]

\end{proposition}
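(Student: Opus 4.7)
The strategy mirrors the second half of the proof of Proposition \ref{commutative squares of T2}. The goal is to construct a functor
$$
B\colon \mathbf{CP}(\llbracket(\sigma_1\sigma_2)^{k}\rrbracket_{\Enh}, M, t) \longrightarrow \mathbf{CP}(\llbracket(\sigma_1\sigma_2)^{k+3}\rrbracket_{\Enh}[4]\{12\}, M, t)
$$
together with an identity-valued natural transformation $\eta \colon R_k \circ I_k \Rightarrow R_{k+3} \circ I_{k+3} \circ B$, and to invoke Proposition \ref{Morse comparison with natural transformation}.

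On objects, $B$ will insert the 6-character subword $0\,0^{\bx}\,1\,0\,0^{\bx}\,1$, representing an additional full twist that creates two new $\bx$-marked closed loops, immediately after the leading block of ones of each enhanced word. A direct count shows that this insertion adds 2 ones and 4 zeros, so the raw homological degree drops by $-4$, and that the two $\bx$-marked loops contribute $-2$ to the internal grading on top of the base-shift change of $-10$, for a total change of $-12$; after compensating by $[4]\{12\}$, bigrading is preserved. One also has to verify that an unmatched cell of homological degree $t$ is sent to an unmatched cell of homological degree $t$ in the shifted target and that the superscript rules for enhanced words remain satisfied on the right of the inserted block for every $t \leq -1$.

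Before defining $B$ on morphisms, I would classify the zig-zag paths between unmatched cells of homological degrees $t-1$ and $t$, in analogy with Lemma \ref{classifying the paths for T2}. I expect this classification to be the main obstacle: the matching $M$ for 3-braids has three families of arrows rather than one, and the preorder argument used for acyclicity splits into four cases rather than two, so several distinct zig-zag patterns arise between each pair of critical cells and must be individually accounted for. Once the paths are classified, $B$ acts on those edges of $G_{k,t}$ whose changing index lies strictly to the right of the insertion site by simple insertion of the same subword. On those edges whose support meets the insertion site, $B$ maps a single edge to a longer zig-zag path in $G_{k+3,t}$ passing through a fixed sequence of intermediate vertices with appropriate $\bx$- and $\bone$-markings, precisely as the functor $A$ in the proof of Proposition \ref{commutative squares of T2} maps a single edge through four intermediate vertices.

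Finally, setting $\eta_c = \operatorname{id}_{R_k(c)}$, naturality reduces to verifying $R_k(e) = R_{k+3}(Be)$ for every edge $e$. This is immediate on edges with trivial action, and on the non-trivial edges it follows because every arrow in the target zig-zag path corresponds to $\pm \operatorname{id}$ (the saddles only merge or split $\bx$- and $\bone$-marked loops introduced by the insertion) together with a careful but routine sign audit of the cobordism orderings. Applying Proposition \ref{Morse comparison with natural transformation} then produces the required isomorphisms $\varphi_{t-1}$ and $\varphi_t$ and the commuting square.
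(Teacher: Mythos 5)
Your framework is exactly the one the paper uses: insert the subword $0\,0^{\bx}\,1\,0\,0^{\bx}\,1$ immediately after the leading block of ones, extend to a functor between the critical-path categories, take the identity natural transformation, and invoke Proposition~\ref{Morse comparison with natural transformation}; your grading bookkeeping ($-4$ homological, $-12$ internal) is also correct.

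However, you have correctly identified where the difficulty lies and then deferred it, and that deferred work is essentially the entire content of the paper's proof. The phrase ``once the paths are classified'' papers over Lemma~\ref{Stack of statements about critical paths}, a six-part combinatorial lemma that occupies the bulk of the section. Without its statements (the monotone behaviour of $O$ along zig-zag paths, the bound $O(w)+2\geq O(x_n)$ for all vertices $w$ on a critical path with one explicit exception, the exclusion of the subword $0^s1111$ from critical paths, and a surjectivity criterion identifying which short zig-zag segments arise as images under the functor) one cannot verify that the partial edge-definition of the functor is exhaustive, that it induces bijections on morphism sets between unmatched cells, or that the identity natural transformation is actually natural. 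You also underestimate the path-extension step: in the 3-strand case, unlike the 2-strand case, there are two distinct families of non-trivial edges near the insertion site, one mapped through four intermediate vertices $q_1,\dots,q_4$ and the other through eight vertices $r_1,\dots,r_8$; anticipating only a single fixed sequence ``precisely as'' in Proposition~\ref{commutative squares of T2} would miss the second family entirely. So the strategy is right, but the core technical argument is absent.
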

\noindent The proof, which again uses Proposition \ref{Morse comparison with natural transformation}, is broken down to the following five steps.
\begin{enumerate}[label=\protect\bRoman{\arabic*}]
    \item Give a possibly insufficient description of functor $A$. \label{proof steps: insufficient description}
    \item State and prove the long and tedious Lemma \ref{Stack of statements about critical paths}. \label{proof steps: Lemma}
    \item With Lemma \ref{Stack of statements about critical paths} confirm that $A$ in fact is well-defined. \label{proof steps: description sufficies}
    \item With Lemma \ref{Stack of statements about critical paths} show that $A$ induces the necessary bijections for critical cells and for paths. \label{proof steps: bijections}
    \item Verify that setting $\eta_c=\operatorname{id}_{R_k(c)}$ defines a natural transformation. \label{proof steps: natural transformation}
\end{enumerate}

\textbf{Step  \ref{proof steps: insufficient description}}: We start by defining $A$ on vertices and on some edges. Let $t\leq -2$. For a vertex $v\in G_{k,t}$ we designate a vertex $A(v)\in  G_{k+3,t-4}$ by
$$
 A(1\dots 10^{s}y_{j}\dots y_{2k})=1\dots 10\overx \overx^{s}y_{j}\dots y_{2k}.
$$
This sends critical cells to critical cells bijectively. 
For an edge $(u_1\to u_2)\in G_{k,t}$ with $L(u_1\to u_2)> O(u_1)+1$ we assign $A(u_1\to u_2)=(A(u_1)\to A(u_2))$. For edges $(v_1 \nearrow v_2)\in G_{k,t}$ with 
\begin{align}\label{edge v_1 to v_2}
    v_1=1\dots100^sy_j \dots y_{2k} \quad \text{and} \quad v_2= 1\dots110^sy_j \dots y_{2k}      
\end{align}
we put $A(v_1\nearrow v_2)=(A(v_1)\nearrow q_1\searrow q_2 \nearrow q_3 \searrow q_4 \nearrow A(v_2) )$ where
\begin{alignat}{10}
    q_1&=1\dots11&&0^{\bx}&& 1&& 0&&0^{\bx} &&1 &&0&&0^s&&y_j \dots y_{2k} \label{Equation q1} \\
    q_2&=1\dots11&&0&&0&&0&&0^{\bx} &&1 &&0&&0^s&&y_j \dots y_{2k} \label{Equation q2} \\
    q_3&=1\dots11&&0&&0^{\bx}&& 1&& 0^{\bx}&&1&&0&&0^s&&y_j \dots y_{2k} \label{Equation q3} \\
    q_4&=1\dots11&&0&&0^{\bx}&& 1 &&0&&0&&0&&0^s&&y_j \dots y_{2k} \label{Equation q4}
\end{alignat}
and for edges $(w_1 \nearrow w_2)\in G_{k,t}$ with 
\begin{align}\label{edge w_1 to w_2}
w_1=1\dots101^{\bone}0^sy_j \dots y_{2k} \quad \text{and} \quad w_2= 1\dots1110^sy_j \dots y_{2k}     
\end{align}
we set $A(w_1 \nearrow w_2)=(A(w_1) \nearrow r_1 \searrow\dots \searrow r_8 \nearrow A(w_2) )$ where
\begin{alignat*}{10}
    r_1&=1\dots 1&&0        &&0^{\bx}   &&1     &&0         &&0^{\bx}       &&1     &&1      &&1      &&0^sy_j \dots y_{2k} \\
    r_2&=1\dots 1&&0        &&0^{\bx}   &&1     &&0         &&0^{\bone}     &&1     &&0^{\bx}&&1      &&0^sy_j \dots y_{2k} \\
    r_3&=1\dots 1&&0        &&0^{\bx}   &&1     &&0         &&1             &&1     &&0^{\bx}&&1      &&0^sy_j \dots y_{2k} \\
    r_4&=1\dots 1&&0        &&0^{\bx}   &&1     &&0^{\bone} &&1             &&0     &&0^{\bx}&&1      &&0^sy_j \dots y_{2k} \\
    r_5&=1\dots 1&&0        &&0^{\bx}   &&1     &&1         &&1             &&0     &&0^{\bx}&&1      &&0^sy_j \dots y_{2k} \\
    r_6&=1\dots 1&&0        &&0^{\bone} &&1     &&0^{\bx}   &&1             &&0     &&0^{\bx}&&1      &&0^sy_j \dots y_{2k} \\
    r_7&=1\dots 1&&0        &&1         &&1     &&0^{\bx}   &&1             &&0     &&0^{\bx}&&1      &&0^sy_j \dots y_{2k} \\
    r_8&=1\dots 1&&0^{\bone}&&1         &&0     &&0^{\bx}   &&1             &&0     &&0^{\bx}&&1      &&0^sy_j \dots y_{2k}.
\end{alignat*}
From this construction we can see that the amount of reversed edges in $e$ and $Ae$ agrees modulo 2 for every edge $e$ in $G_{k,t}$. This concludes \textbf{Step \ref{proof steps: insufficient description}} and we can move to \textbf{Step \ref{proof steps: Lemma}}.

\begin{lemma}\label{Stack of statements about critical paths}
Let $z_1 \nearrow x_1 \searrow z_2 \nearrow \dots \searrow z_n \nearrow x_n$ be a zig-zag path in $G_{k,t}$ assume that $z_1$ and $x_n$  are critical. The following hold:
\begin{enumerate}[label=\roman*)]

    \item \label{010 increase odeg by 2} If $z_i=y_1\dots y_{j-1}\overone^s y_{j+3}\dots y_{2k}$,
then for some $l\geq i$ and some symbols $b_1,\dots ,b_{j+1} \in Y$  we have $x_l=b_1\dots b_{j+1}0^sy_{j+3}\dots y_{2k} $ and $O(x_l)\geq O(z_i)+2$. Since $O$ is an increasing function along paths, we in particular obtain $O(x_n)\geq O(z_i)+2$. 

\item \label{odeg z_t +2>= odeg x_n} We have $O(w)+2\geq O(x_n)$ for all $w\in \{z_i,x_i\mid 1\leq i\leq n\}$ except when 
\begin{alignat*}{2}
w=z_1&=1\dots 100^{\bx}&&100^{\bx}1\dots 00^{\bx}101 \\
x_n&= 1\dots 111&&100^{\bx}1\dots 00^{\bx}100
\end{alignat*}
in which case we can uniquely determine
$$
x_1=1\dots 110^{\bx}100^{\bx}1\dots 00^{\bx}101.
$$

\item \label{overone moves} The following implications hold for any $j, y_j,\dots,y_{2k}, s,i$:
\begin{alignat*}{5}
    &z_i&&=1\dots 100^{\bx}100^{\bx}1\dots 00^{\bx}10&&0^{\bone}&&1&&0^s y_{j}\dots y_{2k}\\
    \implies& x_i&&=1\dots 100^{\bx}100^{\bx}1\dots00^{\bx}10 &&1&&1&&0^s y_{j}\dots y_{2k}\\   
    \implies& z_{i+1}&&=1\dots 100^{\bx}100^{\bx}1\dots 00^{\bx}10^{\bone}&&1&&0&&0^sy_{j}\dots y_{2k}
\end{alignat*}
\begin{alignat*}{5}
    &z_i&&=1\dots 100^{\bx}100^{\bx}1\dots 00^{\bx}1&&0^{\bone}&&1&&0^s y_{j}\dots y_{2k}\\
    \implies& x_i&&=1\dots 100^{\bx}100^{\bx}1\dots00^{\bx}1 &&1&&1&&0^s y_{j}\dots y_{2k}\\   
    \implies& z_{i+1}&&=1\dots 100^{\bx}100^{\bx}1\dots 00^{\bone}1&&0^{\bx}&&1&&0^s y_{j}\dots y_{2k}
\end{alignat*}
\item \label{requirements L=O+1 or L large} We have $L(z_i\nearrow x_i)=O(z_i)+1$ or $L(z_i\nearrow x_i)\geq j-1$ given that 
$$z_i=1\dots 100^{\bx}100^{\bx}1\dots00^{\bx}100^{s}y_{j}\dots y_{2k}.$$  

\item \label{no 01111 subword} The subword $0^s1111$ cannot be contained in $z_i$ or $x_i$ for any $s$ or $i$.

\item \label{surjectivity helper} Denote the vertex set $K=\{A(v) \mid v \in G_{k-3,t+4}  \}$. 
When $x_i\in K$ and $i<n$, we have $(x_i \searrow z_{i+1} )=A(u_1\searrow u_2)$ for some $u_1,u_2\in G_{k-3,t+4}$. Assuming $z_i\in K$, one of the following holds:
\begin{itemize}
    \item $(z_i \nearrow x_{i})=A(u_1 \nearrow u_2)$ for some $u_1, u_2 \in G_{k-3,t+4}$.
    
    \item $(z_i \nearrow x_{i}\searrow \dots \nearrow x_{i+2})=A(v_1 \nearrow v_2)$ for some $v_1, v_2 \in G_{k-3,t+4}$.
    
    \item $(z_i\nearrow x_{i}\searrow \dots \nearrow x_{i+4})=A(w_1\nearrow w_2)$ for some $w_1, w_2 \in G_{k-3,t+4}$.
\end{itemize}

\end{enumerate}

\end{lemma}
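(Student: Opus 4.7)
The plan is to prove all six parts by a simultaneous induction along the zig-zag path, bootstrapping from local properties of the matching $M$. I would begin with parts \ref{requirements L=O+1 or L large} and \ref{no 01111 subword}, which are essentially statements about individual vertices. For \ref{requirements L=O+1 or L large}, the hypothesis $(z_i \nearrow x_i) \notin M$ combined with the displayed form of $z_i$ leaves very few non-zero up-arrows available: a direct inspection of the three families in the definition of $M$ shows that any such up-arrow with change index strictly between $O(z_i) + 1$ and $j - 1$ would either merge two $\bx$-marked circles (giving a zero cobordism) or would itself be an element of $M$. Part \ref{no 01111 subword} is handled by a purely local check: the subword $0^s 1111$ is incompatible with the superscript conventions of Section \ref{Section: enhanced words of a braid diagram}, because the circle that the leading $0^s$ is supposed to mark gets destroyed by the four consecutive $1$-smoothings.

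Next I would handle part \ref{overone moves}, which tracks how a single $\bone$-superscripted 0 propagates through two consecutive zig-zags. For each implication chain the hypothesis pins down $z_i$ completely, and part \ref{requirements L=O+1 or L large} together with $(z_i \nearrow x_i) \notin M$ leaves only the displayed up-arrow available, which determines $x_i$. The down-arrow $x_i \searrow z_{i+1}$ is then forced because $x_i$ is matched downwards in $M$. Part \ref{010 increase odeg by 2} is obtained by iterating \ref{overone moves}: each double step either shifts the $\bone$ two positions to the left while keeping the tail after index $j+3$ intact, or absorbs the $\bone$ into the leading $1$-block, thereby increasing $O$ by at least $2$. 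After finitely many steps one arrives at the asserted $x_l$. Part \ref{odeg z_t +2>= odeg x_n} then follows from \ref{010 increase odeg by 2} combined with the classification of critical cells; the listed exceptional configuration corresponds to the unique situation in which the leading $\bone$ is adjacent to the critical tail $101$ and must be treated by hand.

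The main obstacle will be part \ref{surjectivity helper}, where one must recognise zig-zag fragments whose endpoints lie in the image of $A$. The strategy is to note that membership in $K$ is detected by the presence of the marker subword $00^{\bx}0^{\bx}$ immediately after the leading $1$-run. Any up-arrow $z_i \nearrow x_i$ with $z_i \in K$ either leaves this marker untouched, in which case it is the $A$-image of an edge from $G_{k-3,t+4}$ with $L > O + 1$, or it disturbs the marker; in the latter case, parts \ref{overone moves} and \ref{requirements L=O+1 or L large} constrain the next several steps so tightly that the fragment is forced to trace either the four-step chain through $q_1, q_2, q_3, q_4$ of \eqref{Equation q1}--\eqref{Equation q4} (matching $A(v_1 \nearrow v_2)$ of \eqref{edge v_1 to v_2}) or the eight-step chain through $r_1, \dots, r_8$ (matching $A(w_1 \nearrow w_2)$ of \eqref{edge w_1 to w_2}). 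The down-arrow statement for $x_i \in K$ is simpler, because such down-arrows are elements of $M$ and therefore act locally on the marker in a way that is immediately identified as an $A$-image. Careful bookkeeping with part \ref{no 01111 subword} eliminates any remaining ambiguity about which chain is followed, yielding the trichotomy in the statement.
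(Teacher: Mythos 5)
Your proposal inverts the dependency structure of the paper's argument in a way that creates genuine gaps, most visibly in part \ref{no 01111 subword}.

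For part \ref{no 01111 subword}, you claim the exclusion of $0^s1111$ is ``a purely local check'' forced by the superscript conventions of Section \ref{Section: enhanced words of a braid diagram}. This is false when $s=\bminus$: in a smoothing of $(\sigma_1\sigma_2)^k$, a $0$ followed by exactly four $1$s and another $0$ is not the bottom of a circle (even number of $1$s between), so the $\bminus$ superscript is precisely what the convention assigns, and $0^{\bminus}1111$ is a perfectly valid subword of a cell of $\llbracket (\sigma_1\sigma_2)^k\rrbracket_{\Enh}$. The impossibility is a property of \emph{critical zig-zag paths}, not of enhanced words in general. The paper proves it by picking the last vertex in the path containing the subword and deriving the contradiction $O(z_i)+5 = O(x_i) \leq O(x_n) \leq O(z_i)+2$ via parts \ref{010 increase odeg by 2} and \ref{odeg z_t +2>= odeg x_n}. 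You cannot therefore prove \ref{no 01111 subword} before \ref{010 increase odeg by 2} and \ref{odeg z_t +2>= odeg x_n}, which is what your bootstrapping order requires.

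Two further issues follow from the same inversion. For part \ref{requirements L=O+1 or L large}, you claim ``direct inspection'' of $M$ suffices, but the intermediate up-arrows you want to rule out need not be zero morphisms or elements of $M$; the paper instead follows the implications of \ref{overone moves} forward along the path to a vertex of a specific form $11\dots10^{\bone}100^{\bx}1\dots00^{\bx}1000^s\dots$ from which no continuation reaching a critical cell is possible. This is a global path-chasing argument, not a local one, and it sits downstream of \ref{overone moves}, not upstream. For part \ref{010 increase odeg by 2}, you claim it follows ``by iterating \ref{overone moves}'', but \ref{overone moves} has a strictly more restrictive hypothesis than \ref{010 increase odeg by 2}: in \ref{overone moves} the prefix before the $\overone$ block is forced to be $1\dots100^{\bx}1\dots00^{\bx}1$ (possibly with an extra $0$), whereas in \ref{010 increase odeg by 2} the prefix $y_1\dots y_{j-1}$ is arbitrary. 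The paper handles the general prefix with a separate maximality argument (choosing $l$ largest such that the $\overone$ block has moved left while keeping the tail $0y_{j+3}\dots y_{2k}$ fixed). Iterating \ref{overone moves} does not cover this. The correct dependency order is the one in the paper: \ref{010 increase odeg by 2} standalone, \ref{odeg z_t +2>= odeg x_n} from \ref{010 increase odeg by 2}, then \ref{overone moves} from both, then \ref{requirements L=O+1 or L large} from \ref{overone moves}, then \ref{no 01111 subword} from \ref{010 increase odeg by 2} and \ref{odeg z_t +2>= odeg x_n}, and finally \ref{surjectivity helper} using \ref{overone moves}, \ref{requirements L=O+1 or L large} and \ref{no 01111 subword}.
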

\begin{proof}
    \begin{enumerate}[label=\roman*)]
         
        \item Define $l$ as the largest index so that there exists $p\leq j$ and $c_1,\dots, c_{p-1},c_{p+3},\dots ,c_{j+1}\in Y$ with 
        $$
        z_l=c_1\dots c_{p-1}0^{\bone} 10^s c_{p+3}\dots c_{j+1}0y_{j+3}\dots y_{2k}.
        $$
        (In case of $p=j$ we have overlap so the latter 0 of $0^{\bone} 10^s$ is the 0 at index $j+2$.)
        
        Case $L(z_l\nearrow x_l)\leq p-1$. Clearly $x_l$ is not critical so $l<n$. Since arrow $x_l\searrow z_{l+1}$ goes downwards in homological degree $(z_{l+1} \nearrow x_l)\in M$ and we can see from the definition of $M$ that $L(x_l\searrow z_{l+1})\geq p$ is not possible. The case $L(x_l\searrow z_{l+1})< p$ is also impossible as we assumed that $l$ was the largest index. 

        Case $L(z_l\nearrow x_l)= p$. If there would be a pattern in $c_1\dots c_{p-1}$ which would force a matching of $M$ to take place, then both $z_l$ and $x_l$  would be matched downwards or both upwards in homological degree. Since this is impossible, one of the following holds
        \begin{align}
        c_1\dots c_{p-1}=&1\dots 100^{\bx}100^{\bx}1\dots 00^{\bx}1 0\label{critical cell of form a local}\\
    c_1\dots c_{p-1}=&1\dots 100^{\bx}100^{\bx}1\dots 00^{\bx}10^s1 \label{critical cell of form b local} \\
    c_1\dots c_{p-1}=&1\dots 100^{\bx}100^{\bx}1\dots 00^{\bx}100^s1 \label{critical cell of form c local} \\
    c_1\dots c_{p-1}=&1\dots 100^{\bx}100^{\bx}1\dots 00^{\bx}100 \label{critical cell of form d local} \\
    c_1\dots c_{p-1}=&1\dots 1. \label{critical cell of form e local}
\end{align}
         The Equations \ref{critical cell of form a local} to \ref{critical cell of form d local} would force $z_{l+1}$ to violate the maximality of $l$, whereas Equation \ref{critical cell of form e local} proves the claim.

        Case $L(z_l\nearrow x_l)= p+2$. Either this arrow is reversed or there is a pattern in $c_1\dots c_{p-1}0^s1$ resulting in both $z_l$ and $x_l$ being matched downwards or both upwards in homological degree. All three of these are absurd.
        
        Case $L(z_l\nearrow x_l)\geq p+3$. Both $z_l$ and $x_l$ are matched downwards or both upwards by a pattern in $c_1\dots c_{p-1}0^{\bone} 10^s$ which creates a contradiction.

        \item Every pair $z_1,x_n$ of critical cells in neighbouring  homological degrees satisfies that $O(z_1)+2\geq O(x_n)$ except for the pair
        \begin{alignat*}{2}
z_1&=1\dots 100^{\bx}&&100^{\bx}1\dots 00^{\bx}101 \\
x_n&= 1\dots 111&&100^{\bx}1\dots 00^{\bx}100
\end{alignat*}
        where $O(z_1)+3=O(x_n)$. In this case some downwards arrow $x_i\searrow z_{i+1}$ needs to get rid of 1 at the last index $2k$ in the zig-zag path $z_1 \nearrow x_1 \searrow \dots \searrow z_n \nearrow x_n$. This means $z_{i+1}=y_1\dots y_{2k-3}\overone$ and by \ref{010 increase odeg by 2} we must get $O(z_{i+1})+2\leq O (x_n)$. If $L(z_1 \nearrow x_1)>O(z_1)+1$ then again by \ref{010 increase odeg by 2} we get some $x_l$ with $l< i+1$ and 
        $$
        O(z_1)+4\leq O(x_l)+2\leq O(z_{i+1})+2 \leq O(x_n)=O(z_1)+3 
        $$
        which is a contradiction. Hence $L(z_1\nearrow y_1)=O(z_1)+1$ proving the claim.

        \item By dividing to cases based on $L(z_i  \nearrow x_i)$ and using \ref{010 increase odeg by 2} and \ref{odeg z_t +2>= odeg x_n} one can see that these are the only possible options.

        \item Any other choice for $L(z_i \nearrow x_i)$ would lead to some $l,p\leq j, s$ and $y_p,\dots,y_{2k}$ with 
        $$
        z_l=11\dots 10^{\bone}1 0 0^{\bx} 1 \dots 0 0^{\bx} 1000^s y_p \dots y_{2k}
        $$        
        by following the implications of \ref{overone moves}. After that no matter how we will try to construct an arrow $z_l \nearrow x_l$,  the vertex $x_l$ will be matched upwards resulting in a contradiction. 
        
        \item In the scenario where $z_i$ is the last vertex in the path $z_1\nearrow x_1\searrow \dots \nearrow x_n$ which contains the subword $0^s1111$, we can deduce that $O(z_i)+5=O(x_i)$ which combined with \ref{odeg z_t +2>= odeg x_n} creates a contradiction:
        $$
        O(z_i)+5= O(x_i)\leq  O(x_n) \leq O (z_i)+2.
        $$
        Assuming that $x_l$ is the last vertex with $0^s1111$ subword, we get a similar contradiction by using \ref{010 increase odeg by 2} twice and then \ref{odeg z_t +2>= odeg x_n}.

        \item 
        When $x_i\in K$ and $i<n$ it is easy to see that $L(x_i \searrow z_{i+1})>O(x_i)+7$ which further implies that $(x_i \searrow z_{i+1} )=A(u_1 \searrow u_2)$ for some $u_1,u_2\in G_{k-3,t+4}$. 
            
        Next we assume $z_i=A(a)$ for some $a\in G_{k-3,t+4}$ and divide into cases based on $L(z_i \nearrow x_i)$.

        Case $L(z_i \nearrow x_i)> O(z_i)+7$. It follows that $x_i=A(b)$ for some $b$ and we can put $(z_i \nearrow x_i)=A(a \nearrow b)$. 
        
        Case $L(z_i \nearrow x_i)= O(z_i)+7$. We know that 
        $$
        z_i=1\dots 10\overx\overx^{u}y_jy_{j+1}y_{j+2} \dots y_{2k+6}
        $$ 
        and we claim that $j<2k+6$, $u=\bone $ and $y_jy_{j+1}=10^s$ for some $s$ by ruling out all other possibilities. From \ref{requirements L=O+1 or L large} we see that $y_j=0$ is not possible, so $y_j=1$. The homological degree tells us that there are at least 6 zeros in $z_i$ which implies $j<2k+6$. Applying \ref{no 01111 subword} to $x_i$ rules out $y_jy_{j+1}= 11$ yielding $y_jy_{j+1}=10^s$. The subscript $u=\bx$ would force the vertex $z_i$ to get matched downwards in homological degree making it impossible for the edge $z_i \nearrow x_i$ to be part of a zig-zag path between critical cells. Hence
        $$
        z_i=1\dots 10\overx\overx^{\bone}10^s y_{j+2} \dots y_{2k+6}=A(w_1)
        $$ 
        where $w_1$ is some enhanced word satisfying Equation \ref{edge w_1 to w_2}. The fact that the next 8 edges also agree with the image of an edge $A(w_1 \nearrow w_2)$ of Equation \ref{edge w_1 to w_2} follows from \ref{overone moves}.
        
        Case $O(z_i)+1<L(z_i\nearrow x_i)< O(z_i)+7$. These are impossible by \ref{requirements L=O+1 or L large}.

        Case $L(z_i \nearrow x_i)= O(z_i)+1$. This and the next four edges are uniquely determined to complete a path coinciding with $A(v_1 \nearrow v_2)$ where $v_1$ and $v_2$ satisfy Equation \ref{edge v_1 to v_2}. This can be determined by exhausting all other options with \ref{overone moves} and \ref{requirements L=O+1 or L large}.  
    \end{enumerate}
\end{proof}
\textbf{Step \ref{proof steps: description sufficies}:}    Notice that we defined $A$ for all edges of $G_{k,t}$ except those $a \nearrow b$ and $a'\nearrow b'$ with
\begin{alignat*}{2}
L(a\nearrow b)&=O(a)+1, \quad &&a=1\dots 10^{\bx} 1 0^s y_j \dots y_{2k}, \\ L(a' \nearrow b')&=O(a')+1, \quad
  &&a'=1\dots 10^s11 y_j \dots y_{2k}.    
\end{alignat*}
The edge $a\nearrow b$ cannot be contained in any path between critical cells, since $a$ is matched downwards. The edge $a' \nearrow b'$ cannot be contained in a critical path either due to part  \ref{odeg z_t +2>= odeg x_n} of Lemma \ref{Stack of statements about critical paths}. It follows that $A$ determines a functor 
$$
    A\colon \mathbf{CP}(\llbracket(\sigma_1\sigma_2)^{k}\rrbracket_{\Enh},M,t)\to \mathbf{CP}(\llbracket(\sigma_1\sigma_2)^{k+3}\rrbracket_{\Enh}[4]\{12\},M,t).
$$

\textbf{Step \ref{proof steps: bijections}:} The fact that critical paths are sent injectively to critical paths is given by the fact that $A$ is injective on vertices and surjectivity is proven with part \ref{surjectivity helper} of Lemma \ref{Stack of statements about critical paths}.

\textbf{Step \ref{proof steps: natural transformation}:} What remains to be done is to verify that 
    $$
    \eta\colon R_k\circ I_{k} \Rightarrow R_{k+3} \circ I_{k+3} \circ A,\quad \eta_c=\operatorname{id}_{R_k(c)}
    $$ 
is a natural transformation. In other words we need to verify that for all edges  $e\colon v\to w$, the diagram
\begin{equation}
\begin{tikzcd} \label{natural transformation in second T3}
v \arrow[d, "e"] \arrow[r,"\eta_{v}"] & A(v) \arrow[d, "Ae"] \\
w \arrow[r,"\eta_{w}"]                 & A(w)                  
\end{tikzcd}    
\end{equation}
commutes in $\Mat(\Cob(6))$. 

Let $v_1\nearrow v_2$ be an edge and $q_1,\dots q_4$ be vertices as in Equations \ref{edge v_1 to v_2}-\ref{Equation q4}. Up to a sign, the arrows 
    $$
    v_1\to A(v_1),\quad v_2\to A(v_2) \quad \text{ and } \quad q_1\searrow q_2\nearrow q_3\searrow q_4\nearrow A(v_2).
    $$
 in the category $\Mat(\Cob(6))$ are identity cobordisms whereas the arrows $v_1\nearrow v_2$ and $A(v_1)\nearrow q_1$ coincide. By carefully calculating the signs, one can see that the Diagram \ref{natural transformation in second T3}
    commutes for $v_1\nearrow v_2$. Checking the commutation 
    for the other possible edges is only a minor variation of this. This concludes \textbf{Step} \ref{proof steps: natural transformation}. We have now met the requirements of Proposition \ref{Morse comparison with natural transformation} and thus proven Proposition \ref{second commutative square of T3}.    \qed

    \begin{remark}\label{Odd Khovanov homology remark}
        Odd Khovanov homology is a homology theory of oriented link diagrams with oriented crossings which agrees with the regular (even) Khovanov homology when taken with $\Z/2\Z$ coefficients. The sign assignments of odd Khovanov homology are more complicated and the theories are different over integers. The odd Khovanov homology also has a formulation based on cobordisms and in Theorem 8.2 of \cite{Schtz2022} the odd Khovanov homology of 3-strand torus links was calculated using the scanning algorithm. 

        Propositions \ref{first commutative square of T3} and \ref{second commutative square of T3} in this paper were written for the purpose of even Khovanov homology, at a time when the author was unaware of Schütz's work. Nevertheless,  we believe that our discrete Morse theory based approach also works for the odd Khovanov homology with the following adjustments, which we will state in the language of \cite{Schtz2022}.  Firstly, the category $\Mat(\Cob(6))$ is replaced with $\Z \operatorname{Chr}_{\bullet/X}(B,\dot{B})$. Order of delooping matters, so we reloop ($\Psi^{-1}$) from bottom to top and deloop ($\Psi$) from top to bottom. Matchings and paths will work out the same, but the signs will be different. This also leads to a slightly different result, since  \textbf{Step} \ref{proof steps: natural transformation} of \ref{second commutative square of T3} does not hold as is. However, applying the functor $A$ in  \textbf{Step} \ref{proof steps: natural transformation} twice leads to an agreement of the signs meaning that odd Khovanov homology of 3-stranded torus links has a period of length 8 whereas the even Khovanov homology has a period of length 4.        
    \end{remark}


\section{Composing tangles and Khovanov homology}\label{Section: Composing tangles}

In Sections \ref{Section: DMT for T2} and \ref{Section: DMT for T3} discrete Morse theory was used to obtain fairly abstract results about Khovanov complexes of tangles. To collect more concrete results about Khovanov homology groups of links we will no longer need to use this tool. Instead, we will recall some theory by Bar-Natan enabling us to compose tangles  into links and Khovanov complexes of tangles into ``tensor complexes" which are equal to Khovanov complexes of links. For Khovanov complexes of links, it will make sense to talk about homology after taking a suitable functor. Homology has the advantage over chain homotopy type that an individual homology group is only affected by the neighbouring degrees which can be kept constant while slightly altering the link. A schematic view into our arguments about Khovanov homology can be seen in Figure \ref{scheme for homology isomorphims via truncated complexes}. 

\begin{figure}[htbp]
    \centering
    \begin{tikzpicture}
        \node[draw, rectangle, align=center] (A) at (0, 0) {Khovanov \\ homology of  link \\ $L_1$};
        
        \node[draw, rectangle, align=center] (B) at (4, 0) {Homology of  \\ ``tensor complex" \\ $D(\mathcal{A}, \mathcal{C}$)};

        \node[draw, rectangle, align=center] (C) at (8, 0) {Homology of \\  ``tensor complex" \\ with truncation \\ $D(\tau \mathcal{A}, \mathcal{C})$};

        \node[draw, rectangle, align=center] (A) at (0, -3) {Khovanov \\ homology of  link \\ $L_2$};
        
        \node[draw, rectangle, align=center] (B) at (4, -3) {Homology of  \\ ``tensor complex" \\ $D(\mathcal{B}, \mathcal{C}$)};

        \node[draw, rectangle, align=center] (C) at (8, -3) {Homology of \\  ``tensor complex" \\ with truncation \\ $D(\tau \mathcal{B}, \mathcal{C})$};

        \node (E) at (2,0) {\large$\cong$};
        
        \node (F) at (6,0) {\large$\cong$};

        \node (K) at (2,-3) {\large$\cong$};
        
        \node (L) at (6,-3) {\large$\cong$};
        
        \node (G) at (8,-1.5) {\large$\cong$};

        \node[align=center, color=red, font=\small] (H) at (2, 1.5) {decompose with \\ Bar-Natan's theory };
        
        \node[align=center, color=red, font=\small] (J) at (6, 1.5) {carefully chosen \\ indices  };

        \node[align=center, color=blue] (I) at (10, -1.5) {$\tau \mathcal{A}\cong \tau \mathcal{B}$};

        \node[align=center, color=red, font=\small] (M) at (2, -4.5) {decompose with \\ Bar-Natan's theory };
        
        \node[align=center, color=red, font=\small] (N) at (6, -4.5) {carefully chosen \\ indices  };

        \draw[dashed, color=red!40] (E) -- (H);
        \draw[dashed, color=red!40] (F) -- (J);
        \draw[dashed, color=blue!40] (G) -- (I);
        
        \draw[dashed, color=red!40] (M) -- (K);
        \draw[dashed, color=red!40] (N) -- (L);
    \end{tikzpicture}
    \caption{Our scheme for obtaining isomorphisms of Khovanov homology groups. The blue isomorphisms of truncated complexes have been obtained in Sections \ref{Section: DMT for T2}  and \ref{Section: DMT for T3} and they will be restated in Proposition \ref{isomorphisms  of snip-complexes}.  In this Section we will justify the other isomorphisms by recalling some theory and finding bounds on indices. 
    }
    \label{scheme for homology isomorphims via truncated complexes}
\end{figure}
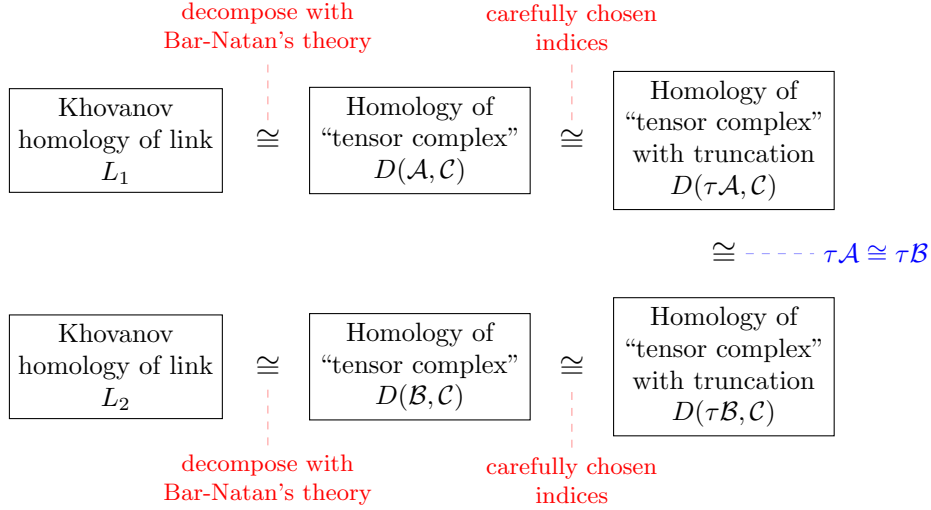




An \term{$n$-input planar arc diagram} is a picture similar to the one drawn in Figure \ref{example of 3 input planar arc diagram} which creates a bigger tangle from $n$ smaller tangles. More formally, an $n$-input planar arc diagram $D$ is a circle with $n$ numbered empty input circles inside it. There are also oriented arcs that are closed or connected to the $2b$ outer boundary points and to the deleted circles from both ends. 
To make the composition unambiguous, there are also stars $(*)$ on the sides of the circles.   Inputting arc, star and orientation matching tangles/tangle diagrams $T_1,\dots, T_n$ to $D$ yields a larger tangle/tangle diagram $D(T_1,\dots, T_n)$ in which case we say that the composition makes sense. One can do this also for complexes $(\mathcal{C}_i,d_i)$ over cobordism categories with matching boundaries to define ``tensor complex" $(D(\mathcal{C}_1,\dots , \mathcal{C}_n), d)$ with
$$
(D(\mathcal{C}_1,\dots , \mathcal{C}_n))^r= \bigoplus_{r=r_1+\dots r_n} D(\mathcal{C}_1^{r_1},\dots , \mathcal{C}_n^{r_n}).
$$
The differentials $d^r$ are defined by matrix elements with 
$$
d^r_{A\to B}=(-1)^{\sum_{j<i}r_j}D(\operatorname{id}_{\mathcal{C}_1^{r_1}},\dots,d^{r_i}_i,\dots,\operatorname{id}_{\mathcal{C}_n^{r_n}}   ) 
$$
where $A=D(\mathcal{C}_1^{r_1},\dots \mathcal{C}_i^{r_i},\dots, \mathcal{C}_n^{r_n})$ and $B=D(\mathcal{C}_1^{r_1},\dots \mathcal{C}_i^{r_i+1},\dots, \mathcal{C}_n^{r_n})$. These operations make collections of tangles and complexes into so called planar algebras.

\begin{figure}[ht]
    \centering
    
\begin{tikzpicture}[scale=0.6,
  arrowmark/.style 2 args={
    postaction=decorate,
    decoration={
      markings,
      mark=at position #1 with {\arrow{#2}}
    }
  }
]

\coordinate (in1) at (1,1);
\coordinate (in2) at (-2,-0.1);
\coordinate (in3) at (2,-2);
\coordinate (tilt1) at (1,0);
\coordinate (tilt3) at ($(50:1)$);
\coordinate (tilt4) at ($(120:1)$);
\coordinate (tilt5) at ($(150:1)$);
\coordinate (tilt6) at ($(180:1)$);
\coordinate (tilt7) at ($(210:1)$);
\coordinate (tilt8) at ($(300:1)$);
\coordinate (tilt9) at ($(330:1)$);
\draw[dashed] (0,0) circle (4);
\foreach \c in {1,2,3} {
  \draw[dashed] (in\c) circle (1);
  \node at (in\c) {$\c$};
}
\draw[->] (4,0) to[out=180,in=50,looseness=1.5] ($(in1)+(tilt3)$);
\draw[->] ($(in2)+(tilt8)$) to[out=300,in=150,looseness=1.5] ($(in3)+(tilt5)$);
\draw[->] ($(in2)+(tilt4)$) to[out=120,in=0,looseness=4] ($(in2)+(tilt1)$);
\draw[->] ($(in1)+(tilt8)$) to[out=300,in=330,looseness=1.5] ($(in2)+(tilt9)$);
\draw[arrowmark={0.5}{>}] ($(in3)+(tilt7)$) to[out=210,in=30,looseness=1.5] ($4*(tilt7)$);

\draw[->] (-0.5,2) arc (0:360:0.3) -- (-0.5,2);

\node at ($3.8*(tilt6)$) {*};
\node at ($(in1)+1.2*(tilt6)$) {*};
\node at ($(in2)+1.2*(tilt6)$) {*};
\node at ($(in3)+1.2*(tilt6)$) {*};

\node at ($4*(tilt7)$) {$\bullet$};
\node at (4,0) {$\bullet$};

\end{tikzpicture}
    \caption{An example of a 3-input planar arc diagram $D$ with 2 outer boundary points marked with $\bullet$. 
    In the proceeding diagrams, the stars $(*)$ will be omitted for clarity. }
    \label{example of 3 input planar arc diagram}
\end{figure}

\begin{lemma}[Bar-Natan]\label{Bar-Natan planar algebra}
    Let $D$ be an $n$-input planar arc diagram. The planar algebra structure of $D$ commutes with the double bracket $\llbracket \cdot \rrbracket$, that is,
    $$
    \llbracket D(T_1,\dots,T_n)\rrbracket=D(\llbracket T_1\rrbracket,\dots,\llbracket T_n\rrbracket ).
    $$
    Moreover the planar algebra structure preserves homotopy and distributes with direct sums, that is, for compatible complexes $\mathcal{C}_1,\dots,\mathcal{C}_n,\mathcal{C}_i',\mathcal{D},\mathcal{D'}$ with $\mathcal{D}\simeq \mathcal{D'}$ we have
    $$
    \begin{array}{c}
     D(\mathcal{C}_1,\dots,\mathcal{C}_{i-1},\mathcal{D},\mathcal{C}_{i+1}\dots,\mathcal{C}_n)\simeq D(\mathcal{C}_1,\dots,\mathcal{C}_{i-1},\mathcal{D'},\mathcal{C}_{i+1}\dots,\mathcal{C}_n)      \\[2pt]
    D(\mathcal{C}_1,\dots,\mathcal{C}_i\oplus \mathcal{C}_i',\dots ,\mathcal{C}_n)\cong  D(\mathcal{C}_1,\dots,\mathcal{C}_i,\dots,\mathcal{C}_n)\oplus D(\mathcal{C}_1,\dots,\mathcal{C}'_i,\dots,\mathcal{C}_n).
    \end{array}
    $$
\end{lemma}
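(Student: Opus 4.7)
The plan is to prove the three claims in order, with the first one being essentially combinatorial bookkeeping, the second being a formal consequence of functoriality, and the third following from additivity.

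For the first claim, I would set up a bijection between smoothings of $D(T_1,\dots,T_n)$ and tuples of smoothings $(S_1,\dots,S_n)$ where $S_i$ is a smoothing of $T_i$. Since the crossings of $D(T_1,\dots,T_n)$ are exactly the disjoint union of the crossings of the $T_i$ (the planar arc diagram $D$ contributes no crossings), $01$-words of length $\sum n_i$ correspond to tuples of $01$-words. Under this identification, the object $D(S_1,\dots,S_n)$ obtained by inserting the smoothings into $D$ matches the smoothing indexed by the concatenated word, provided one orders the crossings of $D(T_1,\dots,T_n)$ by ordering the inputs $1,\dots,n$ and within each input using its own crossing order. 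I would then check that the grading shifts match: the writhe of $D(T_1,\dots,T_n)$ equals $\sum (n_{+,i}-n_{-,i})$, so the homological and internal shifts distribute over the direct sum decomposition. Finally, the non-zero matrix elements of $d$ for $D(T_1,\dots,T_n)$ occur precisely between words differing at one index $r$; such an index lies in exactly one input, say input $i$, and the associated saddle cobordism is the identity outside of input $i$, matching $D(\operatorname{id},\dots,d_i^{r_i},\dots,\operatorname{id})$. The sign $(-1)^{\sum_{j<i} r_j}$ is exactly the parity contribution from the $1$s in the earlier inputs, matching the Khovanov sign convention. This step is mostly verification; the main care needed is sign-bookkeeping.

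For the second claim (homotopy invariance), the point is that $D(\mathcal{C}_1,\dots,\mathcal{C}_{i-1},-,\mathcal{C}_{i+1},\dots,\mathcal{C}_n)$ is an additive functor from $\Kom(\Mat(\Cob(2b_i)))$ to $\Kom(\Mat(\Cob(2b)))$. Given a homotopy equivalence $f\colon \mathcal{D}\to \mathcal{D}'$, $g\colon \mathcal{D}'\to \mathcal{D}$ with $\operatorname{id}-gf = dh + hd$ and $\operatorname{id}-fg = dh' + h'd$, I would promote $f,g,h,h'$ to the corresponding morphisms $F,G,H,H'$ in the tensor complex by applying $D(\operatorname{id},\dots,-,\dots,\operatorname{id})$ at slot $i$, with the appropriate sign $(-1)^{\sum_{j<i} r_j}$ absorbed into the differential. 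The relations $\operatorname{id}-GF = \partial H + H\partial$ and $\operatorname{id}-FG = \partial H' + H'\partial$ follow from the planarity of $D$ (the operation at slot $i$ commutes strictly with identities at the other slots) together with the sign conventions already verified in step one.

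For the third claim, I would invoke the bilinearity of morphisms in the additive category $\Mat(\Cob(2b))$: the planar operation $D$ sends a formal direct sum $\mathcal{C}_i\oplus \mathcal{C}_i'$ in slot $i$ to $D(\dots,\mathcal{C}_i,\dots)\oplus D(\dots,\mathcal{C}_i',\dots)$ because inserting a formal sum amounts to gluing matrix-block by matrix-block. Since the differentials act diagonally on this decomposition (they only involve one $\mathcal{C}_j$ at a time), the isomorphism holds at the level of based complexes.

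The only nontrivial obstacle is the sign convention in the first step; once the ordering of crossings in $D(T_1,\dots,T_n)$ is fixed to be compatible with the enumeration of inputs, the sign $(-1)^{\sum_{j<i} r_j}$ in the definition of $d^r$ is forced, and the remaining identifications are natural. One could alternatively cite Sections~5--6 of \cite{BarNatanKhovanovTangles} where all three statements are established, but the arguments above can be extracted as a streamlined self-contained verification.
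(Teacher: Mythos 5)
The paper does not give a proof here at all: it simply cites Theorem~2 of \cite{BarNatanKhovanovTangles}. Your proposal, by contrast, reconstructs Bar-Natan's argument; the overall architecture (cube decomposition for part one, functoriality of the planar operation in a single slot for part two, bilinearity for part three) is the right one, and with enough care it does give a self-contained proof. That said, two places are presented with more confidence than they deserve.

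First, your sign claim in step one is not literally correct as stated. You assert that ``$(-1)^{\sum_{j<i} r_j}$ is exactly the parity contribution from the $1$s in the earlier inputs.'' But $r_j$ is the \emph{homological degree} in slot $j$, namely $k_j - n_{-,j}$ where $k_j$ is the number of $1$s, so $\sum_{j<i} r_j$ and $\sum_{j<i} k_j$ differ by the constant $\sum_{j<i} n_{-,j}$. When that constant is odd, the tensor-complex differential and the single-tangle Khovanov differential genuinely differ in sign. This is harmless because the discrepancy is constant in each slot (so the two complexes are identified by a diagonal sign change), but the lemma as stated asserts an equality of based complexes, and you need either to invoke this isomorphism explicitly or to fix conventions (e.g.\ absorb $n_{-,j}$ appropriately) so the signs match on the nose. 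This is precisely the sort of detail Bar-Natan works through in \cite{BarNatanKhovanovTangles}.

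Second, in step two you describe promoting the chain homotopies $h,h'$ to homotopies $H,H'$ by ``applying $D(\operatorname{id},\dots,-,\dots,\operatorname{id})$ at slot $i$, with the appropriate sign $(-1)^{\sum_{j<i}r_j}$ absorbed into the differential.'' This won't quite do as written, because a homotopy has degree $-1$ and therefore picks up an additional, position-dependent Koszul sign when it is slid past the differentials of the other slots; that sign has to be built into $H$ itself (via a factor such as $(-1)^{\sum_{j<i} r_j}$ or $(-1)^{\sum_{j>i} r_j}$ on the component $D(\operatorname{id},\dots,h,\dots,\operatorname{id})$), not ``absorbed into the differential,'' which is already fixed. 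Once you write out the relation $\operatorname{id}-GF=\partial H+H\partial$ termwise, the correct choice of signs becomes forced. The third claim is fine as you present it.

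So: the approach is sound and essentially Bar-Natan's, but the two sign steps are glossed over in a way that, taken literally, is false; each needs a short explicit computation or an explicit appeal to a sign-changing isomorphism. Given the paper just cites \cite{BarNatanKhovanovTangles}, your closing suggestion to do the same is the more economical choice.
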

\begin{proof}
    See Theorem 2 from \cite{BarNatanKhovanovTangles}.
\end{proof}

Finally, to get an actual homology out of a link we need to apply a TQFT functor $\mathcal{F}$  from $\Mat(\Cob(0))$ to the category of graded $\Z$-modules where homology makes sense. 
For us, it sufficies to know that the functor $\mathcal{F}$
preserves grading, is additive  and  $\mathcal{F}(\emptyset)=\Z\{0\}$. The \term{Khovanov homology} $\mathscr H$ of link $L$ is defined up to isomorphism by 
$$
\mathscr{H}^{*,*}(L)\cong H^{*,*}\mathcal{F}\llbracket \Tilde{L}\rrbracket
$$
where $\Tilde{L}$ is any link diagram of $L$ and $H^{*,*}$ denotes the regular cohomology functor from graded $\Z$-module cochain complexes to bigraded $\Z$-modules.

\subsection{Comparing homological gradings}

Suppose $(\mathcal{C},d)$ is a complex over an additive category $\mathbf{C}$ and $a\in \Z$. We define a \term{truncated complex} $(\tau^{\leq a}  \mathcal{C}, \partial)$ with
$$
(\tau^{\leq a}\mathcal{C})^i=
\begin{cases}
      \mathcal{C}^i, & \text{if}\ i\leq a \\
      0, & \text{otherwise}
    \end{cases}
\qquad
\partial^i=
\begin{cases}
      d^i, & \text{if}\ i< a  \\
      0, &  \text{otherwise}.
    \end{cases}
$$
and similarly we define $\tau^{\geq a} \mathcal{C}$. This terminology allows us to rephrase our previous results.

\begin{proposition}[Propositions \ref{commutative squares of T2}, \ref{first commutative square of T3} and \ref{second commutative square of T3}  rewritten] \label{isomorphisms  of snip-complexes}
    For $k,m\geq0$ there are isomorphisms of truncated complexes:
    \begin{align*}
        \tau^{\geq -m}M\llbracket\sigma_1^m\rrbracket_{\Enh}&\cong \tau^{\geq -m}(M\llbracket\sigma_1^{m+2}\rrbracket_{\Enh}\{2\}) \\
        \tau^{\leq -1}M\llbracket\sigma_1^m\rrbracket_{\Enh}&\cong \tau^{\leq -1}(M\llbracket\sigma_1^{m+2}\rrbracket_{\Enh}[2]\{6\}) \\
        \tau^{\geq -\lfloor 4k/3\rfloor}M\llbracket(\sigma_1\sigma_2)^k\rrbracket_{\Enh}&\cong \tau^{\geq -\lfloor 4k/3\rfloor}(M\llbracket(\sigma_1\sigma_2)^{k+3}\rrbracket_{\Enh}\{6\}) \\ 
        \tau^{\leq -1}M\llbracket(\sigma_1\sigma_2)^k\rrbracket_{\Enh}&\cong \tau^{\leq -1}(M\llbracket(\sigma_1\sigma_2)^{k+3}\rrbracket_{\Enh}[4]\{12\}).      
    \end{align*}
\end{proposition}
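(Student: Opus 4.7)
The plan is to observe that this proposition is a direct reformulation of the three earlier Propositions \ref{commutative squares of T2}, \ref{first commutative square of T3}, and \ref{second commutative square of T3} in the language of truncated complexes. In each case the earlier result furnishes a family of degree-wise isomorphisms together with a commuting square relating consecutive degrees within a prescribed range of homological gradings. All that remains is to assemble these families into chain isomorphisms of the appropriate truncated complexes, so I do not expect any genuine obstacle; the main thing to be careful about is matching the direction of the bounds ($\geq -m$ versus $\leq -1$, and analogously $\geq -\lfloor 4k/3\rfloor$ versus $\leq -1$) with the indices of the truncations.

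For the first isomorphism, Proposition \ref{commutative squares of T2} provides $\psi_t$ and a commutative square for every $t \geq -m$. Since $(\tau^{\geq -m}\mathcal{C})^i = \mathcal{C}^i$ precisely when $i \geq -m$, and since the truncated differential out of degree $i$ agrees with the original differential whenever $i \geq -m$ (the codomain $i+1$ then automatically satisfies $i+1 \geq -m$), the family $\{\psi_t\}_{t \geq -m}$ assembles directly into a chain isomorphism of truncated complexes. Analogously, for the third isomorphism the $\psi_t$ from Proposition \ref{first commutative square of T3}, defined for $t \geq -\lfloor 4k/3\rfloor$, assembles into a chain isomorphism of $\tau^{\geq -\lfloor 4k/3 \rfloor}$-truncations.

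For the second and fourth isomorphisms I would use the $\varphi_t$ halves of Propositions \ref{commutative squares of T2} and \ref{second commutative square of T3}, which supply commuting squares at every pair $(t-1, t)$ with $t \leq -1$. The point is that the truncated differential of $\tau^{\leq -1}\mathcal{C}$ from degree $i$ to $i+1$ is nonzero only when $i+1 \leq -1$, i.e.\ only for pairs $(i, i+1) = (t-1, t)$ with $t \leq -1$, which are exactly the pairs covered by the earlier propositions. At the boundary $i = -1$ the truncated differential is zero, so no square is required there.

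No further computation is needed: each of the four claimed isomorphisms is just the assembly, degree by degree, of the isomorphisms already constructed, with the commuting-square assertion upgraded from a pointwise statement about consecutive degrees to the assertion that the resulting degree-wise isomorphism is a chain map on the truncated complex.
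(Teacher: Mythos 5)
Your proof is correct and follows exactly the reasoning the paper leaves implicit (the proposition is explicitly labelled as a "rewriting" of the earlier propositions, with no separate proof given): the degree-wise isomorphisms $\psi_t$ and $\varphi_t$ from Propositions \ref{commutative squares of T2}, \ref{first commutative square of T3}, and \ref{second commutative square of T3} assemble into chain isomorphisms once one observes that the truncated differential only connects consecutive degrees lying in the range covered by those propositions. You correctly match the $\tau^{\geq}$ truncations to the $\psi_t$ families and the $\tau^{\leq -1}$ truncations to the $\varphi_t$ families, and correctly note that no square is required at the boundary where the truncated differential vanishes.
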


While truncations do not preserve homotopy type, we will see that they will leave some of the homology intact when moving to $\mathbf{Mod}_{\Z}$. To this end, we need some notation: for a complex $\mathcal{C}$, we denote
$$
h_{\min}(\mathcal{C})=\min\{i\in \Z \mid \mathcal{C}^i\not\cong  0\} \quad \text{ and } \quad  h_{\max}(\mathcal{C})=\max\{i\in \Z \mid \mathcal{C}^i\not\cong  0\}.
$$
All of our statements will be trivially true for zero complexes and we assumed that all complexes are finitely supported, so one does not need to worry about the existence of these minima and maxima. 

\begin{theorem}\label{isomorphisms of Khovanov homology groups with 2 input diagrams}
    Let $D$ be a 2-input planar arc diagram  without outer boundary and let  $T$ be a tangle so that $D(\sigma_1,T)$ makes sense. Let $\mathcal{C}$ be a complex with $\llbracket T \rrbracket \simeq \mathcal{C}$.  There are following isomorphisms of Khovanov homology groups:
    \begin{enumerate}    
        \item  For all $m\geq 0$, $i>h_{\max}(\mathcal{C})-m$ and $j$: 
        $$
    \mathscr{H}^{i,j}D(\sigma_1^m, T) \cong\mathscr{H}^{i,j-2} D(\sigma_1^{m+2}, T). 
    $$
        \item  For all $m\geq 0$, $i<h_{\min}(\mathcal{C}) -1$ and $j$: \label{Khovanov homology isomorphism 2}
        $$
    \mathscr{H}^{i,j} D(\sigma_1^m, T)\cong \mathscr{H}^{i-2,j-6}  D(\sigma_1^{m+2}, T). 
    $$    
    \end{enumerate}  Consequently, let $D'$ be a 2-input planar arc diagram   without outer boundary and let  $U$ be a tangle so that $D((\sigma_1 \sigma_2),U)$ makes sense. Assuming $\mathcal{C}'$ is a complex with $\llbracket U \rrbracket \simeq \mathcal{C}'$, we have:
    \begin{enumerate}
    \setcounter{enumi}{2}
        \item  For all $k\geq 0$, $i>h_{\max}(\mathcal{C}')-\lfloor 4k/3\rfloor$ and $j$: \label{Khovanov homology isomorphism 3}
        \begin{align}
    \mathscr{H}^{i,j} D'((\sigma_1\sigma_2)^k, U) \cong \mathscr{H}^{i,j-6} D'((\sigma_1\sigma_2)^{k+3}, U). \label{3rd isomorphism for 2-input diagrams}            
        \end{align}
        \item  For all $k\geq 0$, $i<h_{\min}(\mathcal{C}')-1$ and all $j$: \label{Khovanov homology isomorphism 4}
        \begin{align}
    \mathscr{H}^{i,j} D'((\sigma_1\sigma_2)^k, U) \cong \mathscr{H}^{i-4,j-12}D'((\sigma_1\sigma_2)^{k+3}, U). \label{4th isomorphism  for 2-input diagrams}        
        \end{align}
    
    \end{enumerate}

\end{theorem}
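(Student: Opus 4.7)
The plan is to make the scheme of Figure \ref{scheme for homology isomorphims via truncated complexes} rigorous, assembling the two horizontal rows of identifications and then gluing them together via the vertical isomorphism of truncated complexes provided by Proposition \ref{isomorphisms  of snip-complexes}. Focussing on isomorphism (1), Bar-Natan's planar algebra lemma (Lemma \ref{Bar-Natan planar algebra}) together with the chain homotopy equivalences $\llbracket\sigma_1^m\rrbracket \cong \llbracket\sigma_1^m\rrbracket_{\Enh} \simeq M\llbracket\sigma_1^m\rrbracket_{\Enh}$ from Section \ref{Section: DMT for T2} and the assumption $\llbracket T\rrbracket \simeq \mathcal{C}$ give $\llbracket D(\sigma_1^m, T)\rrbracket \simeq D(M\llbracket\sigma_1^m\rrbracket_{\Enh}, \mathcal{C})$. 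Applying the additive, grading-preserving functor $\mathcal{F}$ (which sends chain homotopies to chain homotopies) and taking cohomology produces
$$\mathscr{H}^{i,j}D(\sigma_1^m, T) \cong H^{i,j}\mathcal{F}D(M\llbracket\sigma_1^m\rrbracket_{\Enh}, \mathcal{C}),$$
with the identical identification for the $(m+2)$-case.

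The main technical ingredient will be an elementary truncation lemma: for an arbitrary complex $\mathcal{A}$, if $i > h_{\max}(\mathcal{C}) - m$ then $\mathcal{F}D(\mathcal{A}, \mathcal{C})$ and $\mathcal{F}D(\tau^{\geq -m}\mathcal{A}, \mathcal{C})$ coincide as graded chain complexes in homological degrees $i-1, i, i+1$ and hence share the same $H^{i,j}$; analogously, if $i < h_{\min}(\mathcal{C}) - 1$ the same holds with $\tau^{\leq -1}$ replacing $\tau^{\geq -m}$. This follows immediately from the direct sum decomposition $D(\mathcal{A}, \mathcal{C})^r = \bigoplus_{i_1+i_2=r} D(\mathcal{A}^{i_1}, \mathcal{C}^{i_2})$ provided by Lemma \ref{Bar-Natan planar algebra} and from the explicit form of the tensor differential: the indices $i_1$ appearing in this decomposition and in the $\mathcal{A}$-part of the differential lie in $[r - h_{\max}(\mathcal{C}), r - h_{\min}(\mathcal{C})]$, and the threshold on $i$ has been chosen sharply so that every such $i_1$ falls within the region where $\mathcal{A}$ and its truncation agree on both chain spaces and differentials.

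Combining these steps with the first isomorphism of Proposition \ref{isomorphisms  of snip-complexes}, and using that internal grading shifts $\{s\}$ leave $h_{\max}$ and $h_{\min}$ unchanged, we chain together
$$\mathscr{H}^{i,j}D(\sigma_1^m, T) \cong H^{i,j}\mathcal{F}D(\tau^{\geq -m}(M\llbracket\sigma_1^{m+2}\rrbracket_{\Enh}\{2\}), \mathcal{C}) \cong H^{i, j-2}\mathcal{F}D(M\llbracket\sigma_1^{m+2}\rrbracket_{\Enh}, \mathcal{C}),$$
the last quantity being $\mathscr{H}^{i, j-2}D(\sigma_1^{m+2}, T)$. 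Isomorphisms (2), (3) and (4) will be established by the same three-step scheme: in (2) and (4) we employ the truncation lemma with $\tau^{\leq -1}$; in (3) and (4) we substitute $U, \mathcal{C}', D'$ and $(\sigma_1\sigma_2)^k$ for $T, \mathcal{C}, D$ and $\sigma_1^m$; and in each case we invoke the corresponding remaining part of Proposition \ref{isomorphisms  of snip-complexes} to cross the vertical arrow of Figure \ref{scheme for homology isomorphims via truncated complexes}.

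The only real obstacle will be bookkeeping: keeping track of how grading shifts introduced by Proposition \ref{isomorphisms  of snip-complexes} propagate through the tensor complex and then pass to indices $j$ of cohomology, and verifying the sharpness of each threshold on $i$. The geometric and algebraic content is already handled by Bar-Natan's lemma and by the discrete Morse theory outputs of Sections \ref{Section: DMT for T2} and \ref{Section: DMT for T3}, so no further analysis of the individual complexes will be needed.
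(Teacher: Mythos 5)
Your proposal is correct and follows essentially the same route as the paper's own proof: decompose via Bar-Natan's planar algebra lemma, observe that for the given range of homological index $i$ the summands and differentials in degrees $i-1,i,i+1$ are unaffected by passing to the truncated complex, then bridge $m$ and $m+2$ (respectively $k$ and $k+3$) through Proposition~\ref{isomorphisms  of snip-complexes} and unwind the grading shift. The one thing worth tightening is that your "truncation lemma" should explicitly note that the same threshold argument is needed twice — once to pass from $M\llbracket\sigma_1^m\rrbracket_{\Enh}$ to its truncation, and once more to pass from the truncation of $M\llbracket\sigma_1^{m+2}\rrbracket_{\Enh}\{2\}$ back to the full $(m+2)$-complex (whose $h_{\min}$ is $-(m+2)$, strictly below the truncation level $-m$) — but both instances are covered by the same inequality $i>h_{\max}(\mathcal{C})-m$, as you observe.
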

\begin{proof}
    Assume $m\geq 0$, $i>h_{\max}(\mathcal{C})-m$ and $j\in \Z$. Using Lemma \ref{Bar-Natan planar algebra}  we obtain:
    \begin{align*}
        \mathscr{H}^{i,j} D(\sigma_1^m,T)&\cong H^{i,j} \mathcal{F} \llbracket D (\sigma_1^m,T)\rrbracket  \cong H^{i,j} \mathcal{F} D(M\llbracket \sigma_1^m\rrbracket_{\Enh},\mathcal{C}).
         \end{align*}
The $\Z$-modules which affect this homology group are
$$
(\mathcal{F} D(M\llbracket \sigma_1^m\rrbracket_{\Enh},\mathcal{C}))^{i+o,j}
$$
where $o\in \{-1,0,1\}$. These modules consist of direct summands 
$$
\mathcal{F} D((M\llbracket \sigma_1^m\rrbracket_{\Enh})^a,\mathcal{C}^b) \quad \text{ with } \quad a+b\in \{i-1,i,i+1\}.
$$
None of these change from a non-zero module to a zero module as one passes from $M\llbracket \sigma_1^m\rrbracket_{\Enh}$ to $\tau^{\geq -m} M\llbracket \sigma_1^m\rrbracket_{\Enh}$ which also means that none of the relevant morphisms change either, yielding
$$
H^{i,j} M\llbracket \sigma_1^m\rrbracket_{\Enh} \cong H^{i,j}(\tau^{\geq -m} M\llbracket \sigma_1^m\rrbracket_{\Enh}).
$$
By using Proposition \ref{isomorphisms  of snip-complexes} and similar reasoning as above, we attain
    \begin{align*}
        H^{i,j} \mathcal{F} D(\tau^{\geq -m}M\llbracket \sigma_1^m\rrbracket_{\Enh},\mathcal{C})& \cong
        H^{i,j} \mathcal{F} D(\tau^{\geq -m}(M\llbracket \sigma_1^{m+2}\rrbracket_{\Enh}\{2\}),\mathcal{C})\\
        & \cong H^{i,j} \mathcal{F} D(M\llbracket \sigma_1^{m+2}\rrbracket_{\Enh}\{2\},\mathcal{C})\\
        & \cong H^{i,j-2} \mathcal{F} D(M\llbracket \sigma_1^{m+2}\rrbracket_{\Enh},\mathcal{C})\\
        & \cong \mathscr{H}^{i,j-2} D(\sigma_1^{m+2},T). 
    \end{align*}
The proofs for Claims \ref{Khovanov homology isomorphism 2}, \ref{Khovanov homology isomorphism 3} and \ref{Khovanov homology isomorphism 4} almost identical.    
\end{proof}

\begin{remark}\label{Remark for 2-input theorem}
When fixing a tangle diagram $U'$ for the tangle $U$ in the previous theorem, we can substitute $\mathcal{C}=\llbracket U' \rrbracket$. This gives $h_{\min}(\mathcal{C})=-n_-$ and $h_{\max}(\mathcal{C})=n_+$ where $n_+$ and $n_-$ are the number of positive and negative crossings of $U'$. Either Isomorphism  \ref{3rd isomorphism for 2-input diagrams} or  \ref{4th isomorphism for 2-input diagrams} can be used to reduce $k$ to $k-3$ whenever $i>n_+  -\lfloor \frac{4}{3}(k-3)\rfloor$ or $i+4<-n_- -1$. One of the two must hold given that $k\geq \frac{3}{4}(n_++n_-)+8$.
    
\end{remark}

By borrowing some simple reduction arguments from \cite{chandler_lowrance_sazdanović_summers_2022}, we can match their results about Khovanov homology of closures of $3$-braids.

    \begin{theorem}\label{Omega0 ... Omega3 contain only 2-torsion}
        The Khovanov homology groups of closures of braids from sets $\Omega_0$, $\Omega_1$, $\Omega_2$, $\Omega_3$ contain only $\Z / 2\Z$ torsion.
    \end{theorem}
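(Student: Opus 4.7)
The plan is to reduce each of the four families to a finite list of links via Theorem~\ref{isomorphisms of Khovanov homology groups with 2 input diagrams} and then finish by direct computer computation of those base cases.

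For each $r \in \{0,1,2,3\}$, every braid in $\Omega_r$ has the form $(\sigma_1\sigma_2)^{3k}\cdot w_r$ with $w_0 = 1$, $w_1 = \sigma_1\sigma_2$, $w_2 = (\sigma_1\sigma_2)^2$, $w_3 = \sigma_1$. I would pick a 2-input planar arc diagram $D_r'$ without outer boundary together with a 6-ended tangle $U_r$ realizing $w_r$ (together with the remaining closure arcs absorbed into $D_r'$), so that the closure of $(\sigma_1\sigma_2)^{3k}w_r$ equals $D_r'\bigl((\sigma_1\sigma_2)^{3k},U_r\bigr)$. Since the crossing number of $U_r$ is bounded independently of $k$, Remark~\ref{Remark for 2-input theorem} supplies a threshold $k_0(r)$ such that whenever $3k \geq k_0(r)$, every bigrading $(i,j)$ satisfies one of the two conditions needed for parts~\ref{Khovanov homology isomorphism 3} or~\ref{Khovanov homology isomorphism 4} of Theorem~\ref{isomorphisms of Khovanov homology groups with 2 input diagrams}. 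This yields a graded identification between $\mathscr{H}^{i,j}$ of the closure of $(\sigma_1\sigma_2)^{3(k+1)}w_r$ and $\mathscr{H}^{i',j'}$ of the closure of $(\sigma_1\sigma_2)^{3k}w_r$, with $(i',j')$ explicitly determined by $(i,j)$ and by which branch of the theorem applies. Iterating downward collapses the $k \geq 0$ portion of each $\Omega_r$ onto the base cases $k \in \{0,1,2\}$.

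For the $k < 0$ portion I would invoke Khovanov-homology mirror symmetry: the Universal Coefficient Theorem yields
\[
\mathrm{Tors}\,\mathscr{H}^{i,j}(L^*) \cong \mathrm{Tors}\,\mathscr{H}^{1-i,-j}(L),
\]
so the property ``only $\Z/2\Z$ torsion'' is preserved under mirror reflection. For $r \in \{0,1,2\}$ the mirror of the closure of $(\sigma_1\sigma_2)^{3k+r}$ is, after $\Delta$-conjugation in $B_3$ and the central identity $(\sigma_2\sigma_1)^3 = (\sigma_1\sigma_2)^3$, in the closure-isotopy class of $(\sigma_1\sigma_2)^{-3k-r}$, which for $k < 0$ has non-negative exponent and is therefore already covered by the $k \geq 0$ reduction. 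The mirror of $\Omega_3$ is handled by a parallel two-input scheme in which $U_3$ is replaced by a tangle containing $\sigma_1^{-1}$ in place of $\sigma_1$; the same periodicity then collapses it to a finite list. Altogether $\Omega_0,\ldots,\Omega_3$ are collectively represented by the 13 base links mentioned in the introduction, which are computed with KHOCA~\cite{Khocasoftwarearticle} and verified to carry only $\Z/2\Z$ torsion.

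The main obstacle is the careful bookkeeping of the grading shifts accumulated through iterated reductions, together with confirming that for each family both the $k \geq 0$ and $k < 0$ halves can be shepherded into the same finite base list without stepping outside $\Omega_0 \cup \cdots \cup \Omega_3$ in a way that would require results not yet available at this point in the paper.
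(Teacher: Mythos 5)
Your proposal is sound and would yield a valid proof, but it departs from the paper in one structural move, and its final sentence is slightly off. The paper's actual argument is very short: it first cites Section~5.1 of \cite{chandler_lowrance_sazdanović_summers_2022} to reduce all of $\Omega_0,\dots,\Omega_3$ at once to the two families of closures of $(\sigma_1\sigma_2)^k$ with $k\geq -2$ and $(\sigma_1\sigma_2)^{3k'+1}\sigma_1$ with $k'\geq 0$, and only then applies Theorem~\ref{isomorphisms of Khovanov homology groups with 2 input diagrams} via Remark~\ref{Remark for 2-input theorem} to the diagrams $D((\sigma_1\sigma_2)^k,\emptyset)$ and $D'((\sigma_1\sigma_2)^k,\sigma_1)$, landing on exactly the $13$ links of \textbf{Computer Data~I}. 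You instead build the whole reduction from scratch: treat each $\Omega_r$ separately over $k\geq 0$ via a two-input decomposition, and dispose of $k<0$ by the Universal Coefficient duality $\operatorname{Tors}\mathscr H^{i,j}(L^!)\cong\operatorname{Tors}\mathscr H^{1-i,-j}(L)$ together with the centrality of $(\sigma_1\sigma_2)^3$ and conjugacy of $\sigma_1\sigma_2$ with $\sigma_2\sigma_1$. That is a perfectly good substitute for the CLSS reduction and keeps the argument self-contained (the duality fact is Khovanov's Corollary~11 and does not depend on anything later in the paper, so the ordering concern you flag is moot; the paper itself packages the same fact as Lemma~\ref{Khovanov homology of mirror image} further on). What it costs you is that your base list is not the paper's: your mirror of $\Omega_3$ for $k<0$ passes through closures of $(\sigma_1\sigma_2)^{3k}\sigma_1^{-1}$, which are not among the paper's $13$ links, and the small-$k$ thresholds produced by Remark~\ref{Remark for 2-input theorem} depend on the particular decomposition chosen. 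So the closing claim that $\Omega_0,\dots,\Omega_3$ ``are collectively represented by the 13 base links mentioned in the introduction'' does not follow from your reduction; you would obtain a finite (possibly larger, and different) base list, which you would then need to verify separately by computer. With that correction the argument is complete.
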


    \begin{proof}
        By Section 5.1 of \cite{chandler_lowrance_sazdanović_summers_2022} it suffices to check the braid closures of  
        $$
        (\sigma_1\sigma_2)^k  \qquad \text{and} \qquad (\sigma_1\sigma_2)^{3k'+1}\sigma_1
        $$
        for $k\geq -2$ and $k'\geq 0$. Applying Theorem \ref{isomorphisms of Khovanov homology groups with 2 input diagrams} via Remark  \ref{Remark for 2-input theorem} to link diagrams $D((\sigma_1\sigma_2)^k,\emptyset)$ and $D'((\sigma_1\sigma_2)^k,\sigma_1)$ drawn in Figure \ref{Link diagrams for proof of Omega_0 to Omega_3} it is enough to check $k=-2,\dots ,7$ and $k'=0,1,2$.  \textbf{Computer Data I} \cite{computerDataFor3BraidPaper} shows that those 13 links only contain $\Z / 2\Z$ torsion.  
        \begin{figure}[ht]
            \centering
            
\begin{tikzpicture}[
  arrowmark/.style 2 args={
    postaction=decorate,
    decoration={
      markings,
      mark=at position #1 with {\arrow{#2}}
    }
  }
,scale=0.55]
    
    \begin{scope}[shift={(9,0)}]
        \draw[dashed] (1,0) circle (4);
        
    \begin{scope}[scale=0.5, shift={(-10,-3.5-1)}]
        
    \draw[dashed] (10, 3.5) circle (4);

    \draw (12.8, 5.8) rectangle (7.2, 1.2);

    \node at (10,3.5) {$(\sigma_1\sigma_2)^{k'}$};

    \draw[<-] (9,1.2) -- (9,-0.38);
    \draw[<-] (11,1.2) -- (11,-0.38);
    
    \draw[->] (9,5.8) -- (9,7.38);
    \draw[->] (11,5.8) -- (11,7.38);

    \draw[<-] (10,1.2) -- (10,-0.5);
    \draw[->] (10,5.8) -- (10,7.5);

    \end{scope}



\draw(0.5,1.4) to[bend left=50] (2.5,1.5);

\draw(-0.5,1.4)  --  (-0.55,1.9);
\draw(0,1.5) --  (0.2,1.9);

\draw(-0.55,1.9+0.75) to[bend left=50]  (3.5,2.5);
\draw(0.2,1.9+0.75) to[bend left=50]  (3,2.5);

    \begin{scope}[scale=0.75, shift={(-0.75,2.5)}]
    \negCrossing{0,0}
    \draw[dashed] (0.5, 0.5) circle (0.707);
    \end{scope}

\draw(-0.5,-1.4-1) to[bend left=-50]  (3.5,-1.5-1);
\draw(0,-1.5-1) to[bend left=-50]  (3,-1.5-1);
\draw(0.5,-1.4-1) to[bend left=-50] (2.5,-1.5-1);

    \draw[->] (3.5,2.5) -- (3.5,-2.5);
    \draw[->] (3,2.5) -- (3,-2.5);
    \draw[->] (2.5,1.5) -- (2.5,-2.5);


    
    \end{scope}   

    \begin{scope}[shift={(0,0)}]
        \draw[dashed] (1,0) circle (4);
        
    \begin{scope}[scale=0.5, shift={(-10,-3.5-1)}]
        
    \draw[dashed] (10, 3.5) circle (4);

    \draw (12.8, 5.8) rectangle (7.2, 1.2);

    \node at (10,3.5) {$(\sigma_1\sigma_2)^k$};

    \draw[<-] (9,1.2) -- (9,-0.38);
    \draw[<-] (11,1.2) -- (11,-0.38);
    
    \draw[->] (9,5.8) -- (9,7.38);
    \draw[->] (11,5.8) -- (11,7.38);

    \draw[<-] (10,1.2) -- (10,-0.5);
    \draw[->] (10,5.8) -- (10,7.5);

    \end{scope}



\draw(-0.5,1.4) to[bend left=50]  (3.5,1.5);
\draw(0,1.5) to[bend left=50]  (3,1.5);
\draw(0.5,1.4) to[bend left=50] (2.5,1.5);

\draw(-0.5,-1.4-1) to[bend left=-50]  (3.5,-1.5-1);
\draw(0,-1.5-1) to[bend left=-50]  (3,-1.5-1);
\draw(0.5,-1.4-1) to[bend left=-50] (2.5,-1.5-1);

    \draw[->] (3.5,1.5) -- (3.5,-2.5);
    \draw[->] (3,1.5) -- (3,-2.5);
    \draw[->] (2.5,1.5) -- (2.5,-2.5);


\draw[dashed] (-1,2.5) circle (0.5);
\node at (-1,2.5) {$\emptyset$};
    
    \end{scope}   
    
\end{tikzpicture}
            \caption{Link diagrams $D((\sigma_1\sigma_2)^k,\emptyset)$ (left) and $D'((\sigma_1\sigma_2)^k,\sigma_1)$ (right). }
            \label{Link diagrams for proof of Omega_0 to Omega_3}
        \end{figure}
    \end{proof}

\subsection{Comparing homological and internal gradings}

To get more results we will need to also compare internal gradings in compositions of complexes. For any based complex with base indexed by $(J_i)_{i\in \Z}$ we denote the set of all cells
$$
B(\mathcal{C})= \{ \mathcal{C}_j^{i} \mid i\in \Z, \ j\in J_i\}.
$$
For a complex $\mathcal{D}$ over $\Mat(\Cob(2l))$ we denote $q_{\min} (\mathcal{D})$ and $q_{\max} (\mathcal{D})$ as the minimum and maximum internal degree shifts of $\Psi \mathcal{D}$, that is, 
\begin{align*}
    q_{\min} (\mathcal{D})&=\min\{ a\in \Z \mid R\{a\}\in B(\Psi\mathcal{D}) \} \\
    q_{\max} (\mathcal{D})&=\max\{ a\in \Z \mid R\{a\}\in B(\Psi\mathcal{D}) \}. 
\end{align*}
Additionally, we need to define the following quantity $s(D)$ for any planar arc diagram $D$ without outer boundary by
\begin{align*}
s(D)=&\# \text{strings of $D$ from input to the same input} \\
        &+\left\lfloor \tfrac{1}{2} \cdot \# \text{strings of $D$ from a input to a different input} \right \rfloor \\
        &+\#\text{closed loops of $D$}.    
\end{align*}

When going up 2 homological degrees in $M\llbracket \sigma_1^m\rrbracket_{\Enh}$ the enhanced words typically change with
$$
1\dots 1 0^{\bx}0^{\bx}0^{\bx}\dots 0^{\bx}0 \leadsto 1\dots 1 110^{\bx}\dots 0^{\bx}0 
$$
which means that the internal degree shift goes up by $4$. In comparison, when going up 2 homological degrees in $M\llbracket (\sigma_1\sigma_2)^k\rrbracket_{\Enh}$ the enhanced words are changed for example with 
$$
1\dots 1 00^{\bx}100^{\bx}1\dots 00^{\bx}10 \leadsto 1\dots 111100^{\bx}1\dots 00^{\bx}10 
$$
which means that the internal degree shift goes up by  only $3$. This difference makes is so that there will be uniform bounds on the indices of  $\Z$-modules relevant to calculating any given homology group $\mathscr H^{i,j} D((\sigma_1\sigma_2)^k, \sigma_1^m, T)$.  As previously, this will allow us to switch complexes with their truncated complexes in the homology calculations. 

A minor problem is that the internal grading is not strictly additive in the way that homological degree is.  There is some ``error" as new circles are formed in between the inputs. Luckily this error can be controlled by $s(D)$. By examining the gradings of $M\llbracket (\sigma_1\sigma_2)^k\rrbracket_{\Enh}$ we notice that we can write 
\begin{align*}
    M(\llbracket (\sigma_1\sigma_2)^k\rrbracket_{\Enh})^a=\bigoplus_{p\in I_a} P_p\left\{d\right\} 
\end{align*}
where every $I_a$ is an index set, $P_p$ is a planar matching of six points on a circle and $3a/2-2k\leq d \leq 3a/2 -2k+1$. Similarly we can write
\begin{align*}
(M\llbracket(\sigma^m)\rrbracket_{\Enh})^b=\bigoplus_{q\in J_b}Q_q\{e\} 
\end{align*}
where every $J_b$ is an index set , $Q_q$ is a planar matching of four points on a circle and $2b-m\leq e\leq 2b-m+1$.

\begin{theorem}\label{Main theorem for 3-input diagrams}
    Let $D$ be a 3-input planar arc diagram without outer boundary and let $T$ be a tangle, so that $D((\sigma_1\sigma_2), \sigma_1, T)$ makes sense. Let $\mathcal{C}$ be a complex with $\llbracket T \rrbracket \simeq \mathcal{C}$ and  define integer valued functions
     \begin{align*}
         v_1(i,j,k,m,D, \mathcal{C})&=- 3 i + 2 j + 4 k + 3 m- 2 s(D) + 3 h_{\min}(\mathcal{C}) - 2 q_{\max}(\mathcal{C}) -7 \\
         v_2(i,j,k,m,D, \mathcal{C})&=-3 i + 2 j + 4 k + 2 m + 2 s(D) + 3 h_{\max}(\mathcal{C}) - 2 q_{\min}(\mathcal{C}) +4.
     \end{align*}
     Then for all $i,j\in \Z$, $k,m\in \Z_{\geq 0}$ such that $v_1(i,j,k,m,D, \mathcal{C})\geq 0$ there is an isomorphism of Khovanov homology groups:
    \begin{align}
    \mathscr{H}^{i,j} D((\sigma_1\sigma_2)^k,\sigma_1^{m},T)\cong \mathscr{H}^{i,j-2} D((\sigma_1\sigma_2)^k,\sigma_1^{m+2},T).     \label{v_1 isomorphism}
    \end{align}
    Similarly for all $i,j\in \Z$, $k,m\in \Z_\geq 0$ such that $v_2(i,j,k,m,D, \mathcal{C})\leq 0$ there is an isomorphism of Khovanov homology groups
    \begin{align}
    \mathscr{H}^{i,j} D((\sigma_1\sigma_2)^k,\sigma_1^{m},T)\cong \mathscr{H}^{i-2,j-6} D((\sigma_1\sigma_2)^k,\sigma_1^{m+2},T).  \label{v_2 isomorphism}
    \end{align}
\end{theorem}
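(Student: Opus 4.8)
The plan is to imitate the proof of Theorem \ref{isomorphisms of Khovanov homology groups with 2 input diagrams} (and its Remark \ref{Remark for 2-input theorem}), only this time keeping track of two gluing slots rather than one, and checking that the stated linear inequalities $v_1\geq 0$ and $v_2\leq 0$ are exactly the conditions which guarantee that truncating $M\llbracket(\sigma_1\sigma_2)^k\rrbracket_{\Enh}$ in the $\sigma_1^m$-slot does not disturb the homology group $\mathscr{H}^{i,j}$. First I would apply Lemma \ref{Bar-Natan planar algebra} to write
$$
\mathscr{H}^{i,j}D((\sigma_1\sigma_2)^k,\sigma_1^m,T)\cong H^{i,j}\mathcal{F}D\big(M\llbracket(\sigma_1\sigma_2)^k\rrbracket_{\Enh},\ M\llbracket\sigma_1^m\rrbracket_{\Enh},\ \mathcal{C}\big),
$$
using that the planar algebra structure commutes with $\llbracket\cdot\rrbracket$, distributes over direct sums, and preserves homotopy, together with the delooping isomorphisms $\llbracket\sigma_1^m\rrbracket\simeq M\llbracket\sigma_1^m\rrbracket_{\Enh}$ and $\llbracket(\sigma_1\sigma_2)^k\rrbracket\simeq M\llbracket(\sigma_1\sigma_2)^k\rrbracket_{\Enh}$ coming from Sections \ref{Section: DMT for T2} and \ref{Section: DMT for T3}. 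The module computing $\mathscr{H}^{i,j}$ and its two neighbours decompose into summands $\mathcal{F}D\big((M\llbracket(\sigma_1\sigma_2)^k\rrbracket_{\Enh})^a,\ (M\llbracket\sigma_1^m\rrbracket_{\Enh})^b,\ \mathcal{C}^c\big)$ with $a+b+c\in\{i-1,i,i+1\}$, so I only need those to see no change in internal degree and in homological support when I truncate the $m$-slot.

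Next I would do the grading bookkeeping. For the first isomorphism I want to replace $M\llbracket\sigma_1^m\rrbracket_{\Enh}$ by $\tau^{\geq a_0}M\llbracket\sigma_1^m\rrbracket_{\Enh}$ (for a suitable $a_0$), apply the first line of Proposition \ref{isomorphisms of snip-complexes} to get $\tau^{\geq a_0}(M\llbracket\sigma_1^{m+2}\rrbracket_{\Enh}\{2\})$, then un-truncate. The only summands that must survive this replacement are those with homological index $b$ in the $\sigma_1^m$-slot such that $a+b+c\in\{i-1,i,i+1\}$ for some cell $P_p\{d\}$ of $M\llbracket(\sigma_1\sigma_2)^k\rrbracket_{\Enh}$ in degree $a$ and some cell of $\mathcal{C}$ in degree $c$, \emph{and} such that the total internal degree of the glued object equals $j$ (up to the $s(D)$ error from new circles between the inputs). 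Using $3a/2-2k\le d\le 3a/2-2k+1$ for the $(\sigma_1\sigma_2)^k$ cells, $2b-m\le e\le 2b-m+1$ for the $\sigma_1^m$ cells, $h_{\min}(\mathcal{C})\le c\le h_{\max}(\mathcal{C})$ and $q_{\min}(\mathcal{C})\le(\text{internal shift of }\mathcal{C}\text{-cell})\le q_{\max}(\mathcal{C})$, together with the additivity $j=d+e+(\text{internal shift of }\mathcal C)+(\text{error bounded by }s(D))$ and $a+b+c\in\{i-1,i,i+1\}$, one solves for a lower bound on $b$; the bound $v_1\geq 0$ is precisely the condition that this forced lower bound on $b$ is $\geq a_0$, so that none of the relevant summands lie in the truncated-away part. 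Then the same chain of isomorphisms as in the proof of Theorem \ref{isomorphisms of Khovanov homology groups with 2 input diagrams}, ending with a global $\{2\}$ shift that moves $j$ to $j-2$, yields \eqref{v_1 isomorphism}. The second isomorphism is entirely parallel: truncate from above via $\tau^{\leq a_1}$, apply the second line of Proposition \ref{isomorphisms of snip-complexes} (which carries the shift $[2]\{6\}$, hence $(i,j)\mapsto(i-2,j-6)$), and check that $v_2\leq 0$ is the condition forcing the relevant $b$'s to satisfy $b\le a_1$.

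The main obstacle I expect is the error term $s(D)$ and getting the inequality constants exactly right. Gluing three complexes along a planar arc diagram without outer boundary produces closed circles sitting between the input disks, and each such circle contributes a delooping that shifts the internal grading; to compare internal degrees across the two slots one has to bound this contribution, and the definition of $s(D)$ (strings returning to the same input, strings between distinct inputs counted with a floor, plus closed loops) is engineered to give a clean two-sided bound. Threading this bound, the one-sided cell-degree inequalities for $M\llbracket(\sigma_1\sigma_2)^k\rrbracket_{\Enh}$ and $M\llbracket\sigma_1^m\rrbracket_{\Enh}$, and the floor in $\lfloor 4k/3\rfloor$ (which never actually enters here because we truncate the $m$-slot, not the $k$-slot) through the arithmetic without an off-by-one error is the delicate part; everything else is a routine repetition of the argument already given for Theorem \ref{isomorphisms of Khovanov homology groups with 2 input diagrams}. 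Once the constants in $v_1$ and $v_2$ are verified to be the sharp thresholds, the proof concludes as above, and I would remark that, as in Remark \ref{Remark for 2-input theorem}, fixing a diagram for $T$ lets one replace $\mathcal{C}$ by $\llbracket T\rrbracket$ and read off concrete bounds in terms of the crossing numbers of $T$.
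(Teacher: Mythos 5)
Your proposal follows the paper's own proof essentially step for step: decompose via Lemma \ref{Bar-Natan planar algebra} into $D(M\llbracket(\sigma_1\sigma_2)^k\rrbracket_{\Enh}, M\llbracket\sigma_1^m\rrbracket_{\Enh}, \Psi\mathcal{C})$, bound the internal shifts of each slot (including the $\pm s(D)$ delooping error), solve the resulting linear system for the range of $b$ so that the truncation $\tau^{\geq -m}$ (resp.\ $\tau^{\leq -1}$) leaves the degree-$(i,j)$ homology untouched, then apply Proposition \ref{isomorphisms of snip-complexes} and un-truncate. The one thing you leave implicit — verifying that the thresholds are exactly $v_1\geq 0$ and $v_2\leq 0$ — is also handled in the paper as a routine (Mathematica-assisted) linear-programming computation, so the approaches coincide.
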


\begin{proof}
    We will carry out the investigation in the homotopic complex
    \begin{align*}
        \llbracket D((\sigma_1\sigma_2)^k,\sigma_1^m, T) \rrbracket &\simeq D(M\llbracket (\sigma_1\sigma_2)^k\rrbracket_{\Enh}, M\llbracket\sigma_1^m\rrbracket_{\Enh}, \Psi\mathcal{C}). 
    \end{align*}
    With some index sets $(K_c)_{c\in \Z}$, planar matchings $R_r$ and integers $f_r$ the chain spaces of the complex $\Psi\mathcal{C}$ can be written as
    $$
    (\Psi\mathcal{C})^c=\bigoplus_{r\in K_c} R_r\{f_r\}.
    $$    
    We can use the linearity of planar algebras to expand direct sums, that is, for $a,b,c\in \Z$ we have
    \begin{align*}
        &D((M\llbracket (\sigma_1\sigma_2)^k)\rrbracket_{\Enh})^a, (M\llbracket\sigma_1^m\rrbracket_{\Enh})^b, (\Psi\mathcal{C})^c)\\
        \cong& D\left(\bigoplus_{p\in I_a} P_p\{d_a\},\bigoplus_{q\in J_b} Q_q\{e_b\},\bigoplus_{r\in K_c} R_r\{f_r\}\right)\\
        \cong&\bigoplus_{p\in I_a, \ q\in J_b \ t\in K_c} D(P_p,Q_q,R_r) \{d_a+e_b+f_r\}.
    \end{align*} 
    By plugging this once more into the delooping isomorphism $\Psi$, we get 
    $$
    D((M\llbracket (\sigma_1\sigma_2)^m)\rrbracket_{\Enh})^a, (M\llbracket\sigma_1^m\rrbracket_{\Enh})^b, (\Psi\mathcal{C})^c)\cong \bigoplus_{
    \substack{
          p\in I_a, \ q\in J_b \\ 
          r\in K_c, \ l\in L_{p,q,t}           
    }
    }\emptyset \{d_a+e_b+f_r+g_l\}
    $$
    where $L_{p,q,r}$ is the set of functions from the set of circles in $D(P_p,Q_q,R_r)$ to $\{-1,1\}$ and $g_l=\int l \, d \mu$ with $\mu$ being the counting measure. A straightforward counting argument shows that we have $|g_l|\leq s(D) $ for all $l$.

    Denote $\mathcal{F}^{i,j}$ as the composition of $\mathcal{F}$ with the projection to $(i,j)$-graded $\Z$-module. Since $\mathcal{F}$ is additive, it follows that 
    \begin{align*}
        &\mathcal{F}^{i,j} D((M\llbracket (\sigma_1\sigma_2)^k)\rrbracket_{\Enh})^a, (M\llbracket\sigma_1^m\rrbracket_{\Enh})^b, (\Psi\mathcal{C})^c)\not\cong 0\\
        \iff & \mathcal{F}^{i,j} \bigoplus_{
    \substack{
          p\in I_a, \ q\in J_b \\ 
          r\in K_c, \ l\in L_{p,q,r}           
    }
    }\emptyset \{d_a+e_b+f_r+g_l\}\not\cong 0 \\
        \iff & \Bigg( 
        \bigoplus_{
    \substack{
          p\in I_a, \ q\in J_b \\ 
          r\in K_c, \ l\in L_{p,q,r}           
    }
    } 
        \Z [a+b+c]\{d_a+e_b+f_r+g_l\}\Bigg)^{i,j}\not\cong 0  \\
        \iff & \begin{cases}
        i=a+b+c &\\
        j=d_a+e_b+f_r+g_l & \text{for some } 
             p\in I_a, \ q\in J_b, \ 
             r\in K_c,\ l\in L_{p,q,r}.
    \end{cases}         
    \end{align*}
    If $i,j,k,m,a,b,c$ are such that the above conditions are met, then there exists $d,e,f,g\in\Z$ such that the following equations hold:    
    \begin{align*}
        i&=a+b+c \\
        j&=d+e+f+g \\
        \frac{3}{2}a-2k & \leq d \leq \frac{3}{2}a-2k +1 \\
        2b-m & \leq e \leq  2b-m+1 \\
        h_{\min}( \mathcal{C})&\leq c \leq h_{\max}(\mathcal{C}) \\ 
        q_{\min}(\mathcal{C}) &\leq f \leq q_{\max}(\mathcal{C})\\
        -s(D) &\leq g \leq s(D).
    \end{align*}
    We can treat this as a convex optimisation problem over the reals and find the minimum and maximum of projection to $b$. With the help of a computer algebra system Matematica we can show that the above equations imply $u_1(i,j,k,m)\leq b \leq u_2(i,j,k,m)$ where
    \begin{align*}
        &u_1(i,j,k,m)= - 3 i + 2 j + 4 k + 2 m- 2 s(D) + 3 h_{\min}( \mathcal{C}) - 2 q_{\max}(\mathcal{C}) -4\\
        &u_2(i,j,k,m)= -3 i + 2 j + 4 k + 2 m + 2 s(D) + 3 h_{\max}( \mathcal{C}) - 2 q_{\min}(\mathcal{C}) .
    \end{align*}
    The contrapositive statement of the above is that given $i,j,k,m,a,b,c$ with $b<u_1(i,j,k,m)$ or $u_2(i,j,k,m)<b$ it follows that 
    \begin{align}
    \mathcal{F}^{i,j} D((M\llbracket (\sigma_1\sigma_2)^k)\rrbracket_{\Enh})^a, (M\llbracket\sigma_1^m\rrbracket_{\Enh})^b, (\Psi\mathcal{C})^c)\cong 0. \label{condition on FD being a zero-module}
    \end{align}
    
    The relevant modules for calculating $(i,j)$-homology are non-zero modules
    $$
    \mathcal{F}^{i+h,j} D((M\llbracket (\sigma_1\sigma_2)^k)\rrbracket_{\Enh})^a, (M\llbracket\sigma_1^m\rrbracket_{\Enh})^b, (\Psi\mathcal{C})^c)
    $$
    where $h\in \{-1,0,1\}$. By Equation \ref{condition on FD being a zero-module} we can restrict to modules with 
    $$
    b\geq \min_{h\in\{-1,0,1\}} u_1(i+h,j,k,m)=v_1(i,j,k,m,D,\mathcal{C})-m.
    $$
    Therefore, whenever $v_1(i,j,k,m,D,\mathcal{C})\geq 0$, it follows that
    \begin{align*}
    &H^{i,j}\mathcal{F}D(M\llbracket (\sigma_1\sigma_2)^k\rrbracket_{\Enh}, M\llbracket\sigma_1^m\rrbracket_{\Enh}, \Psi\mathcal{C}) \\
    \cong &H^{i,j}\mathcal{F}D(M\llbracket (\sigma_1\sigma_2)^k\rrbracket_{\Enh}, \tau^{\geq -m}M\llbracket\sigma_1^m\rrbracket_{\Enh}, \Psi\mathcal{C}). 
    \end{align*}
    The same bound applies for interchanging $M\llbracket\sigma^{m+2}\rrbracket_{\Enh}$ with its truncation and therefore given $v_1(i,j,k,m,D,\mathcal{C})\geq 0$ we can conclude
    \begin{align*}
    &\mathscr{H}^{i,j} D((\sigma_1\sigma_2)^k,\sigma_1^{m},T) \\
    \cong &H^{i,j}\mathcal{F}D(M\llbracket (\sigma_1\sigma_2)^k\rrbracket_{\Enh}, M\llbracket\sigma_1^m\rrbracket_{\Enh}, \Psi\mathcal{C}) \\
    \cong &H^{i,j}\mathcal{F}D(M\llbracket (\sigma_1\sigma_2)^k\rrbracket_{\Enh}, \tau^{\geq -m}M\llbracket\sigma_1^m\rrbracket_{\Enh}, \Psi\mathcal{C}) \\
    \cong &H^{i,j}\mathcal{F}D(M\llbracket (\sigma_1\sigma_2)^k\rrbracket_{\Enh},\tau^{\geq -m}M\llbracket\sigma_1^{m+2}\rrbracket_{\Enh}\{2\}, \Psi\mathcal{C}) \\
    \cong &H^{i,j-2}\mathcal{F}D(M\llbracket (\sigma_1\sigma_2)^k\rrbracket_{\Enh}, M\llbracket\sigma_1^{m+2}\rrbracket_{\Enh}, \Psi\mathcal{C})\\
    \cong&\mathscr{H}^{i,j-2} D((\sigma_1\sigma_2)^k,\sigma_1^{m+2},T).
    \end{align*}
    The second claim  is proven similarly; then the modules with 
    $$
    b\leq \max_{h\in\{-1,0,1\}} u_2(i+h,j,k,m)=v_2(i,j,k,m,D,\mathcal{C})-1
    $$
    can be ignored, since those will be zero-modules.        
\end{proof}

\subsection{Duality and algorithmic reduction of full twists}

So far we have limited our study of complexes $\llbracket(\sigma_1\sigma_2)^k \rrbracket $ and $\llbracket\sigma_1^m \rrbracket $ to non-negative $k$ and $m$. Instead of repeating this investigation for negative parameters from the ground up, we will make use of duality of  Khovanov complexes. There is a canonical contravariant functor $V$ from the category $\Cob(2b)$ to itself, which acts as the identity on objects and turns the morphisms upside down. It is quick to see that $V$ extends to an additive contravariant functor in $\Mat(\Cob(2b))$ and it can be further extended to the category of complexes $\Kom(\Mat(\Cob(2b)))$ with minor reindexing: if $\mathcal{C} \in \Kom(\Mat(\Cob(2b)))$, then we set $(V\mathcal{C})^i=\mathcal{C}^{-i}$. Furthermore, we flip signs of all internal grading shifts: $V(A\{q\})=A\{-q\}$. The following properties of $V$ are straightforward to deduce: 

    \begin{itemize}
        \item For all complexes $\C$, indices $a,q,r\in \Z$ 
        $$
        V\tau^{\leq a} \mathcal{C}=\tau^{\geq -a}V\mathcal{C} \quad \text{and} \quad V(\mathcal{C}[r]\{q\})=(V\mathcal{C})[-r]\{-q\}.
        $$
        \item If $\mathcal{C}\simeq \mathcal{D} $, then $V\mathcal{C}\simeq V \mathcal{D} $.
        \item Let $T$ be a tangle diagram and $T^!$ its mirror image. Then $V\llbracket T \rrbracket =\llbracket T^! \rrbracket$.
    \end{itemize}
From these properties one can derive the following dual proposition.

\begin{proposition}[Dual of Proposition \ref{isomorphisms  of snip-complexes}] \label{Dual proposition of snip-complex isomorphisms}
    For any $m\geq 0$ there is a homotopy equivalence $\llbracket \sigma_1^{-m}\rrbracket \simeq VM\llbracket \sigma_1^m\rrbracket_{\Enh}$ and isomorphisms of  truncated complexes:
    \begin{align*}
        \tau^{\leq m}VM\llbracket\sigma_1^{m}\rrbracket_{\Enh}&\cong \tau^{\leq m}((VM\llbracket\sigma_1^{m+2}\rrbracket_{\Enh})\{-2\}) \\
        \tau^{\geq 1}VM\llbracket\sigma_1^{m}\rrbracket_{\Enh}&\cong \tau^{\geq 1}((VM\llbracket\sigma_1^{m+2}\rrbracket_{\Enh})[-2]\{-6\}) .      
    \end{align*}
\end{proposition}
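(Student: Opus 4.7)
The plan is to deduce all three statements directly from Proposition \ref{isomorphisms  of snip-complexes} by applying the contravariant functor $V$, exploiting the four bulleted properties of $V$ recalled just above. Since $V$ interchanges $\tau^{\leq a}$ with $\tau^{\geq -a}$, flips signs of internal grading shifts via $V(\mathcal{C}\{q\})=(V\mathcal{C})\{-q\}$, flips signs of homological shifts via $V(\mathcal{C}[r])=(V\mathcal{C})[-r]$, and preserves homotopy equivalence, the dualizations of the four isomorphisms for $\sigma_1^{\pm m}$-type complexes should fall out mechanically.

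For the homotopy equivalence $\llbracket \sigma_1^{-m}\rrbracket \simeq VM\llbracket \sigma_1^m\rrbracket_{\Enh}$, I would first note that $\sigma_1^{-m}$ is the mirror image of $\sigma_1^m$, so the property $V\llbracket T\rrbracket=\llbracket T^!\rrbracket$ yields $\llbracket \sigma_1^{-m}\rrbracket = V\llbracket \sigma_1^m\rrbracket$. Combining this with the chain of homotopy equivalences $\llbracket \sigma_1^m\rrbracket \simeq \llbracket \sigma_1^m\rrbracket_{\Enh}\simeq M\llbracket \sigma_1^m\rrbracket_{\Enh}$ established in Section \ref{Section: DMT for T2} and using that $V$ preserves homotopy, the first claim follows.

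For the two truncated-complex isomorphisms, I would simply apply $V$ to the first two isomorphisms of Proposition \ref{isomorphisms  of snip-complexes}. Concretely, from
$$\tau^{\geq -m}M\llbracket\sigma_1^m\rrbracket_{\Enh}\cong \tau^{\geq -m}(M\llbracket\sigma_1^{m+2}\rrbracket_{\Enh}\{2\})$$
applying $V$ and using $V\tau^{\geq -m}=\tau^{\leq m}V$ along with $V(\mathcal{C}\{2\})=(V\mathcal{C})\{-2\}$ gives the second claim. From
$$\tau^{\leq -1}M\llbracket\sigma_1^m\rrbracket_{\Enh}\cong \tau^{\leq -1}(M\llbracket\sigma_1^{m+2}\rrbracket_{\Enh}[2]\{6\})$$
applying $V$ and using $V\tau^{\leq -1}=\tau^{\geq 1}V$ together with $V(\mathcal{C}[2]\{6\})=(V\mathcal{C})[-2]\{-6\}$ gives the third claim.

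Since the argument is a formal consequence of the listed properties of $V$, I do not anticipate a genuine obstacle. The only point that requires a moment of care is the commutation $V\tau^{\leq a}=\tau^{\geq -a}V$, which must be checked at the level of both chain spaces and differentials; once this is in place, the rest is bookkeeping of grading shifts.
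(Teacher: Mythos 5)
Your proposal is correct and is exactly the argument the paper has in mind: the paper itself only says "From these properties one can derive the following dual proposition," leaving the mechanical application of the four bulleted properties of $V$ (and the observation that $\sigma_1^{-m}=(\sigma_1^m)^!$) to the reader. Your bookkeeping of the grading shifts and the truncation-commutation is the whole content of the deduction, and it checks out.
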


The next dual theorem follows from Propositions \ref{isomorphisms  of snip-complexes} and \ref{Dual proposition of snip-complex isomorphisms} analogously to how Theorem \ref{Main theorem for 3-input diagrams} followed from Proposition \ref{isomorphisms  of snip-complexes}.  

\begin{theorem}[Dual of Theorem \ref{Main theorem for 3-input diagrams}] \label{dual 3 input}
      Let $D$ be a 3-input planar arc diagram  without outer boundary and let $T$  be a tangle, so that $D((\sigma_1\sigma_2), \sigma_1, T)$ makes sense. Let $\mathcal{C}$ be a complex with $\llbracket T \rrbracket \simeq \mathcal{C}$ and  define integer valued functions
     \begin{align*}
         & w_1(i,j,k,m,D, \mathcal{C})=- 3 i + 2 j + 4 k + 2 m- 2 s(D) + 3 h_{\min}(\mathcal{C}) - 2 q_{\max}(\mathcal{C}) -6 \\
         & w_2(i,j,k,m,D, \mathcal{C})=- 3 i + 2 j + 4 k + 3 m+ 2 s(D) + 3 h_{\max}(\mathcal{C}) - q_{\min}(\mathcal{C})  +5.          
     \end{align*}
     Then for all $j,i\in \Z$, $k\in \Z_{\geq 0}$, $m\in \Z_{\leq 0}$ such that $w_1(i,j,k,m,D, \mathcal{C})\geq 0$ there is an isomorphism of Khovanov homology groups:
    \begin{align}
    \mathscr{H}^{i,j} D((\sigma_1\sigma_2)^k,\sigma_1^{m},T)\cong \mathscr{H}^{i+2,j+6} D((\sigma_1\sigma_2)^k,\sigma_1^{m-2},T). \label{w_1 isomorphism}    
    \end{align}
    Similarly for all $j,i\in \Z$, $k\in \Z_{\geq 0}$, $m\in \Z_{\leq 0}$ such that $w_2(i,j,k,m,D, \mathcal{C})\leq 0$ there is an isomorphism of Khovanov homology groups
    \begin{align}
    \mathscr{H}^{i,j} D((\sigma_1\sigma_2)^k,\sigma_1^{m},T)\cong \mathscr{H}^{i,j+2} D((\sigma_1\sigma_2)^k,\sigma_1^{m-2},T). \label{w_2 isomorphism}    
    \end{align}

\end{theorem}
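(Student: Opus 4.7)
The plan is to mirror the proof of Theorem \ref{Main theorem for 3-input diagrams} step by step, with two substitutions: wherever that proof used the complex $M\llbracket\sigma_1^m\rrbracket_{\Enh}$ (valid only for $m\geq 0$), we instead use the dual complex $VM\llbracket\sigma_1^{-m}\rrbracket_{\Enh}$ (now with $-m\geq 0$), and wherever it invoked Proposition \ref{isomorphisms  of snip-complexes}, we invoke Proposition \ref{Dual proposition of snip-complex isomorphisms} instead. The first step is to apply the homotopy equivalence $\llbracket\sigma_1^m\rrbracket\simeq VM\llbracket\sigma_1^{-m}\rrbracket_{\Enh}$ together with Lemma \ref{Bar-Natan planar algebra} to replace $\llbracket D((\sigma_1\sigma_2)^k,\sigma_1^m,T)\rrbracket$ by the tensor complex $D(M\llbracket(\sigma_1\sigma_2)^k\rrbracket_{\Enh}, VM\llbracket\sigma_1^{-m}\rrbracket_{\Enh}, \Psi\mathcal{C})$.

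Next I would decompose its chain spaces. Because $V$ inverts homological degrees and flips the sign of every internal shift, at homological degree $b$ the chain space of $VM\llbracket\sigma_1^{-m}\rrbracket_{\Enh}$ is a direct sum of planar matchings $Q_q\{-e_{-b}\}$ with internal shift satisfying $2b+m-1\leq -e_{-b}\leq 2b+m$. Distributing direct sums through the planar algebra product and applying a second delooping puts each chain group into the form $\bigoplus\emptyset\{d+(-e_{-b})+f+g\}$, exactly as in the proof of Theorem \ref{Main theorem for 3-input diagrams}, but with this new linear relation between $b$ and the internal shift of the second factor.

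The heart of the proof is then the same convex optimisation over the reals, carried out with the updated bound on $e_{-b}$, to determine for fixed $(i,j,k,m)$ the range of $b$ for which a non-zero $(i+h,j)$-graded summand can appear for some $h\in\{-1,0,1\}$. Running this optimisation produces the threshold functions $w_1$ and $w_2$. When $w_1\geq 0$, every such $b$ satisfies $b\leq m$, so we may replace $VM\llbracket\sigma_1^{-m}\rrbracket_{\Enh}$ (and its sibling for $\sigma_1^{-m+2}$) by their $\tau^{\leq m}$ truncations without changing the $(i,j)$-homology; the second dual truncation isomorphism of Proposition \ref{Dual proposition of snip-complex isomorphisms}, combined with its shift $[-2]\{-6\}$, then translates into Isomorphism \ref{w_1 isomorphism}. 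The $w_2$ case is symmetric, using $\tau^{\geq 1}$ and the first dual isomorphism whose shift $\{-2\}$ translates into Isomorphism \ref{w_2 isomorphism}.

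The main obstacle is purely sign and index bookkeeping: because $V$ reverses homological indices and negates every internal grading shift, each constraint in the linear programming step must be re-derived with care, and one must verify that the signs of the three shift quantities in $w_1$ and $w_2$ are precisely what comes out of the optimisation. Modulo this, the argument is a direct transposition of the proof of Theorem \ref{Main theorem for 3-input diagrams}, with no new ideas required.
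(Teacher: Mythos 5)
Your overall strategy is exactly the paper's: the paper states that Theorem \ref{dual 3 input} follows from Propositions \ref{isomorphisms  of snip-complexes} and \ref{Dual proposition of snip-complex isomorphisms} analogously to how Theorem \ref{Main theorem for 3-input diagrams} followed from Proposition \ref{isomorphisms  of snip-complexes}, and substituting $VM\llbracket\sigma_1^{-m}\rrbracket_{\Enh}$ and rerunning the convex optimisation is precisely that transposition. However, your index bookkeeping contains two errors that, as written, make the argument come out wrong.

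First, the bound on the internal shift of $(VM\llbracket\sigma_1^{-m}\rrbracket_{\Enh})^b$ has the wrong sign on $m$. Writing $m'=-m\geq 0$, the paper's decomposition gives $(M\llbracket\sigma_1^{m'}\rrbracket_{\Enh})^{-b}=\bigoplus_q Q_q\{e\}$ with $-2b-m'\leq e\leq -2b-m'+1$, so after $V$ negates the shift one gets $2b+m'-1\leq -e\leq 2b+m'$, i.e.\ $2b-m-1\leq -e_{-b}\leq 2b-m$ in your notation (recall $m\leq 0$). You wrote $2b+m-1\leq -e_{-b}\leq 2b+m$, which differs precisely in the sign of $m$. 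Since this $m$-term is exactly the quantity being optimised against, the error would propagate into the coefficients of $m$ in $w_1$ and $w_2$.

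Second, and more seriously, you have paired each threshold function with the wrong truncation. The complex $VM\llbracket\sigma_1^{-m}\rrbracket_{\Enh}$ is supported on $[0,-m]$, so "$b\leq m$" with $m\leq 0$ is impossible except when $m=0$. When $w_1\geq 0$, the relevant $b$ satisfy $b\geq 1$, and the truncation to use is $\tau^{\geq 1}$, which is indeed what appears in the \emph{second} isomorphism of Proposition \ref{Dual proposition of snip-complex isomorphisms} with shift $[-2]\{-6\}$ — so the shift and target (Isomorphism \ref{w_1 isomorphism}) you name are correct, but the truncation you assert is the one from the \emph{first} isomorphism. Symmetrically, $w_2\leq 0$ corresponds to $b\leq -m$ and $\tau^{\leq -m}$, which is the truncation of the \emph{first} dual isomorphism with shift $\{-2\}$; your proposal instead names $\tau^{\geq 1}$. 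In short, you have attached the correct shift operators and the correct target isomorphisms to $w_1$ and $w_2$, but swapped the truncations (and the resulting bounds on $b$) between the two cases. Fixing both issues is routine bookkeeping and the proof then closes exactly as you outline.
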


We need a quick lemma to relate the homology groups of a link and its mirror image. 
\begin{lemma}\label{Khovanov homology of mirror image}
    Let $L_1$ and $L_2$ be links and suppose 
    $
    \mathscr{H}^{i_1+t,j_1}(L_1)\cong\mathscr{H}^{i_2+t,j_2}(L_2)
    $
    for $t=0$ and $t=1$. Then
    $
    \mathscr{H}^{-i_1,-j_1}(L_1^!)\cong \mathscr{H}^{-i_2,-j_2}(L_2^!).
    $
\end{lemma}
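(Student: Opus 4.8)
The statement is a clean consequence of the behaviour of Khovanov homology under mirroring, so the plan is to recall the duality formula and then chase the hypothesis through it. The key fact is the universal-coefficients-type identity for Khovanov homology of a mirror image: for any link $L$ there is a natural isomorphism
$$
\mathscr{H}^{i,j}(L^!)\cong \Hom_{\Z}\bigl(\mathscr{H}^{-i,-j}(L),\Z\bigr)\oplus \operatorname{Ext}^1_{\Z}\bigl(\mathscr{H}^{-i+1,-j}(L),\Z\bigr),
$$
which follows from the excerpt's duality functor $V$ on $\Kom(\Mat(\Cob(2b)))$ together with $V\llbracket T\rrbracket=\llbracket T^!\rrbracket$ and the usual universal coefficient theorem applied after pushing through the TQFT functor $\F$ into $\mathbf{Mod}_{\Z}$ (one also uses $V\tau^{\le a}=\tau^{\ge -a}V$ only peripherally; the essential input is $(V\mathcal C)^i=\mathcal C^{-i}$ with internal grading negated).

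First I would record this mirror formula as the working tool, noting it expresses $\mathscr{H}^{*,*}(L^!)$ purely in terms of the groups $\mathscr{H}^{*,*}(L)$ (via $\Hom$ and $\operatorname{Ext}^1$ over $\Z$), and that these are \emph{functorial} constructions: an isomorphism of abelian groups $A\cong A'$ induces isomorphisms $\Hom(A,\Z)\cong\Hom(A',\Z)$ and $\operatorname{Ext}^1(A,\Z)\cong\operatorname{Ext}^1(A',\Z)$. Then the proof is essentially bookkeeping on indices. Apply the formula to $L_1^!$ at bidegree $(-i_1,-j_1)$:
$$
\mathscr{H}^{-i_1,-j_1}(L_1^!)\cong \Hom_{\Z}\bigl(\mathscr{H}^{i_1,j_1}(L_1),\Z\bigr)\oplus \operatorname{Ext}^1_{\Z}\bigl(\mathscr{H}^{i_1+1,j_1}(L_1),\Z\bigr).
$$
The hypothesis gives $\mathscr{H}^{i_1+t,j_1}(L_1)\cong\mathscr{H}^{i_2+t,j_2}(L_2)$ for $t=0,1$, so the two summands on the right are isomorphic to $\Hom_{\Z}(\mathscr{H}^{i_2,j_2}(L_2),\Z)$ and $\operatorname{Ext}^1_{\Z}(\mathscr{H}^{i_2+1,j_2}(L_2),\Z)$ respectively. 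Reassembling via the same formula applied to $L_2^!$ at bidegree $(-i_2,-j_2)$ yields $\mathscr{H}^{-i_1,-j_1}(L_1^!)\cong\mathscr{H}^{-i_2,-j_2}(L_2^!)$, as desired.

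I expect the only real subtlety — the "main obstacle," though it is minor — to be getting the index shift in the $\operatorname{Ext}^1$ term consistent with the paper's sign/grading conventions for $V$ (the excerpt sets $(V\mathcal C)^i=\mathcal C^{-i}$ and $V(A\{q\})=A\{-q\}$), and making sure the $t=1$ instance of the hypothesis is exactly what is needed to control the $\operatorname{Ext}$ contribution rather than an off-by-one neighbour. Once the mirror formula is stated in the form above, everything else is formal: naturality of $\Hom(-,\Z)$ and $\operatorname{Ext}^1(-,\Z)$ does all the work, and no properties of Khovanov homology beyond the duality functor and the universal coefficient theorem are invoked. If the paper prefers to avoid $\operatorname{Ext}$ entirely, an alternative is to run the argument at chain level: $V$ and the truncation-compatibility already in the excerpt show $\F\llbracket L_i^!\rrbracket\simeq$ the dual cochain complex of $\F\llbracket L_i\rrbracket$ with reversed grading, and an isomorphism of the two homologies of $\F\llbracket L_1\rrbracket$ and $\F\llbracket L_2\rrbracket$ in two consecutive degrees $i_\bullet, i_\bullet+1$ (with the stated internal degrees) localises enough of the complexes to produce the dualised isomorphism in degree $-i_\bullet$; I would present whichever version matches the level of machinery the surrounding section has set up.
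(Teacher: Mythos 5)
Your proposal is correct and follows essentially the same route as the paper: the paper's entire proof is a one-line citation of duality of Khovanov complexes (Corollary 11 of Khovanov's original paper), which is precisely the mirror-image/universal-coefficients identity you write down and then chase through. You simply make explicit the $\operatorname{Hom}$/$\operatorname{Ext}^1$ bookkeeping (and the splitting, valid since these are finitely generated abelian groups) that the paper leaves implicit, and the index shift $-i_1\mapsto i_1+1$ in the $\operatorname{Ext}^1$ term is exactly what the $t=1$ hypothesis is there to cover.
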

\begin{proof}
    Use duality of Khovanov complexes, see Corollary 11 of \cite{khovanov1999categorification}.
\end{proof}

We are now ready to combine our previous theorems and restate the main theorem in the following detailed and extended form.

\begin{theorem}\label{algorithm theorem}
     Let $D$ be  a 3-input planar arc diagram  without outer boundary and let  $T$ be a tangle, so that $D((\sigma_1\sigma_2), \sigma_1, T)$ makes sense. For all $a,b\in \Z_{\geq 0}$ there exist finite index sets $K,M\subset \Z$ such that for all $k_1,m_1\in \Z$ there exist $k_2\in K$ and $m_2\in M$  so that for all $i_1,j_1\in \Z$ there exist $i_2,j_2\in \Z$ such that for all $0\leq c\leq a$ and $0\leq d\leq b$ we have an isomorphism of Khovanov homology groups:
     $$
     \mathscr{H}^{i_1+c,j_1+d}(D((\sigma_1\sigma_2)^{k_1}, \sigma_1^{m_1}, T))\cong \mathscr{H}^{i_2+c,j_2+d}(D((\sigma_1\sigma_2)^{k_2}, \sigma_1^{m_2}, T)).
     $$
     Moreover, $k_2,m_2,i_2,j_2$ can be obtained from a linear time algorithm, and the sets $K$ and $M$ can be written explicitly. 
\end{theorem}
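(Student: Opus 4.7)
The plan is a two-phase reduction: I would first reduce $m_1$ to a fixed finite set $M$ via Theorems \ref{Main theorem for 3-input diagrams} and \ref{dual 3 input}, and then, with $m_2 \in M$ fixed, absorb $\sigma_1^{m_2}$ into $T$ to form a 2-input planar arc diagram and iterate Theorem \ref{isomorphisms of Khovanov homology groups with 2 input diagrams} (via Remark \ref{Remark for 2-input theorem}) to reduce $k_1$ to a fixed finite set $K$. The case $k_1<0$ will be handled by passing to the mirror using the duality functor $V$ and Lemma \ref{Khovanov homology of mirror image}, so I may assume $k_1\geq 0$ throughout.

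For Phase 1, fix a complex $\mathcal{C}\simeq \llbracket T\rrbracket$. For an isomorphism of Theorem \ref{Main theorem for 3-input diagrams} to hold uniformly over the grid $\{(i_1+c,j_1+d):0\leq c\leq a,\ 0\leq d\leq b\}$, the conditions $v_1\geq 0$ and $v_2\leq 0$ at the target state translate, after taking the worst-case direction of $(c,d)$, into strengthened source-state inequalities of the shape $v_1(i_1,j_1,k_1,m_1)\geq 3a+2$ and $v_2(i_1,j_1,k_1,m_1)\leq -2b-2$. Subtracting them shows that both fail simultaneously only when $m_1+C_1-C_2<3a+2b+4$, where $C_1,C_2$ are the constants in the definitions of $v_1,v_2$, so once $m_1$ exceeds an explicit threshold $M_0=M_0(a,b,D,\mathcal{C})$ at least one reduction is available. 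A $v_1$-reduction decreases $m$ by $2$ and shifts $(i,j)\mapsto (i,j+2)$, while a $v_2$-reduction shifts $(i,j)\mapsto (i+2,j+6)$; a direct computation shows joint applicability persists so long as the current $m$ remains above $M_0$. Iterating produces a chain of isomorphisms reducing $m_1$ to some $m_2\in \{0,1,\ldots,M_0\}$ with an explicit shift $(i_1,j_1)\mapsto (i_2,j_2)$. The mirror case $m_1<0$ is handled symmetrically using Theorem \ref{dual 3 input}, and setting $M:=\{m\in\Z:|m|\leq M_0\}$ completes the phase.

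For Phase 2, I form the 2-input sub-diagram $D_0$ of $D$ obtained by merging its second and third inputs, set $T':=D_0(\sigma_1^{m_2},T)$, and note that $D((\sigma_1\sigma_2)^{k_1},\sigma_1^{m_2},T)=D'((\sigma_1\sigma_2)^{k_1},T')$ for the resulting 2-input planar arc diagram $D'$. By the analogous argument using Theorem \ref{isomorphisms of Khovanov homology groups with 2 input diagrams} and Remark \ref{Remark for 2-input theorem}, once $k_1$ exceeds a threshold $K_0=K_0(a,b,D,T')$, at least one of Isomorphisms \ref{3rd isomorphism for 2-input diagrams} or \ref{4th isomorphism for 2-input diagrams} applies uniformly on the grid; iterating reduces $k_1$ to some $k_2$ in a finite set $K_{m_2}$. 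Setting $K:=\bigcup_{m_2\in M}K_{m_2}$ gives the required index set, finite because $M$ is. Each reduction step is decided in constant time by evaluating a constant number of linear inequalities, so the construction of $(k_2,m_2,i_2,j_2)$ runs in time $O(|k_1|+|m_1|)$.

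The main obstacle is the joint applicability argument in Phase 1: verifying that the union of the applicability regions for the two $v$-reductions (in their $(c,d)$-uniform versions) covers every source state $(i_1,j_1)$ once $m_1>M_0$, so that the iteration never gets stuck between the two half-spaces. The key point is that $v_1$ and $v_2$ contain the $m$-coefficients $3$ and $2$ respectively, so their difference is $m_1$ plus a constant, and the "gap" of $-3i_1+2j_1$-values where neither condition holds shrinks linearly in $m_1$ until it closes at $M_0$. Once this is set up, the rest is a routine iteration, with parallel verifications for the $w$-reductions and for the 2-input reduction of Phase 2.
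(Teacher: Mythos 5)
Your proposal follows essentially the same strategy as the paper: reduce $m$ first via Theorems \ref{Main theorem for 3-input diagrams} and \ref{dual 3 input}, then merge $\sigma_1^{m_2}$ with $T$ and reduce $k$ via Theorem \ref{isomorphisms of Khovanov homology groups with 2 input diagrams}, mirroring when $k_1<0$. Your key observation -- that $v_1-v_2=m+\text{const}$ (and likewise for $w_1,w_2$), so for $|m|$ above an explicit threshold one of the two reduction hypotheses must hold uniformly over the $(c,d)$-grid -- is precisely what the paper's quantity $g(a,b,D,\mathcal{C})$ captures, and your $h$-style bound on $k$ for the 2-input phase matches Remark \ref{Remark for 2-input theorem}. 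The only substantive difference is cosmetic: you take $K=\bigcup_{m_2\in M}K_{m_2}$, whereas the paper evaluates the $k$-threshold once at $m=\max M$ (which dominates since $|m_2|$ controls the crossing count of $T'$); both give the same finite set up to inclusion. One small detail you should make explicit: Lemma \ref{Khovanov homology of mirror image} needs the isomorphism at both $t=0$ and $t=1$, so the mirror recursion must be run with the grid parameter enlarged from $a$ to $a+1$ (this is why the paper's algorithm calls itself with $a+1$, and why $M,K$ are defined via $g(a+1,\dots)$ and $h(a+1,\dots)$); without this the mirrored statement would not be strong enough to descend back.
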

\begin{proof}
    We prove the claim by using Theorems \ref{isomorphisms of Khovanov homology groups with 2 input diagrams}, \ref{Main theorem for 3-input diagrams} and \ref{dual 3 input} in Algorithm \ref{isomorphism algorithm}. First we define two operations, flip and merge, on tangle and 3-input planar arc diagrams and illustrate them in Figure \ref{figure for merge and flip}. Formally flip takes in a diagram and outputs a diagram and merge takes in a 3-input planar arc diagram $D$ and two of its inputs and it outputs a 2-input planar arc diagram and a compatible tangle. We shall also define an integer valued function $g$ by
    $$
    g(a,b,D,\mathcal{C})=4s(D)+3(h_{\max}(\mathcal{C})-h_{\min}(\mathcal{C}))+2(q_{\max}(\mathcal{C})-q_{\min}(\mathcal{C}))+3a+2b+14   
    $$
    where $a,b$ are integers, $D$ is a planar arc diagram and  $\mathcal{C}$ is a complex. Similarly we define real valued function $h$ by 
    $$
    h(a,\mathcal{C})=\frac{3}{4}(h_{\max}(\mathcal{C})-h_{\min}(\mathcal{C})+a)+8.
    $$

    \begin{figure}
        \centering
        
\begin{tikzpicture}[
  arrowmark/.style 2 args={
    postaction=decorate,
    decoration={
      markings,
      mark=at position #1 with {\arrow{#2}}
    }
  },scale=0.68
]
  \begin{scope}[shift={(-7.3,6)}]
      
    \draw[dashed] (0,0) circle (3.75);

      \begin{scope}[shift={(1,1.5)},scale=0.4]
        
  \draw[dashed] (0.5,0) circle (2.5);

  \draw (-1, -1) rectangle (2, 1);

    \node at (0.5,0) {$\sigma_1^m$};

  \draw[->] (1,1) -- (1,2.46);
  \draw[->] (0,1) -- (0,2.46);
  \draw[<-] (1,-1) -- (1,-2.46);
  \draw[<-] (0,-1) -- (0,-2.46);

  \end{scope}

  \begin{scope}[shift={(-2,-2.5)},scale=0.3]
    \draw[dashed] (10, 3.5) circle (4);

    \draw (12.8, 5.8) rectangle (7.2, 1.2);

    \node at (10,3.5) {$(\sigma_1\sigma_2)^k$};

    \draw[<-] (9,1.2) -- (9,-0.38);
    \draw[<-] (11,1.2) -- (11,-0.38);
    
    \draw[->] (9,5.8) -- (9,7.38);
    \draw[->] (11,5.8) -- (11,7.38);

    \draw[<-] (10,1.2) -- (10,-0.5);
    \draw[->] (10,5.8) -- (10,7.5);
  \end{scope}

      \begin{scope}[shift={(-2,0)}, scale=0.4]
    
    \draw[dashed] (0,0) circle (3);

    \negCrossing{0,-2}
    \posCrossing{1,-1}
    \posCrossing{-1,-1}
    \negCrossing{0,0}

    \lineUpBendLeftWArrow{-1,-2}
    \lineRightBendDown{-1,-2}
    \lineRightBendUp{1,1}
    \lineUpBendRight{2,0}

    \lineUp{-1,0}
    \negCrossing{-1,1}

    \draw[<-] (1,-2.84) -- (1,-2);
    
    \draw[<-] (2.84,-1) -- (2,-1);

    \draw[<-] (0,2) -- (0,3);
    \draw[<-] (-1,2) -- (-1,2.84);

    \end{scope}

    \draw[<-, looseness=1, out=270, in=270] (0.7,-2.6) to (-0.5,-2);
    \draw[-, looseness=1, out=90, in=0] (-0.5,-2) to (-0.88,-0.405);
    
    \draw[->, looseness=1, out=90, in=270] (0.7,-0.3) to (1,0.52);

    \draw[->, looseness=1, out=90, in=270] (1,-0.28) to (1.4,0.52);

    \draw[-, looseness=1, out=270, in=180] (-1.6,-1.12) to (-1,-3);
    \draw[->, looseness=1, out=0, in=270] (-1,-3) to (1,-2.62);

    \draw[-, looseness=1.5, out=90, in=90] (1.3,-0.3) to (3,-1.5);
    \draw[->, looseness=1.5, out=270, in=270] (3,-1.5) to (1.3,-2.6);

    \draw[->, looseness=0.7, out=90, in=90] (1,2.48) to (-2,1.2);
    
    \draw[->, looseness=1.3, out=90, in=90] (1.4,2.48) to (-2.4,1.1);

  \end{scope}
\begin{scope}[shift={(0,0)}]

  \begin{scope}[shift={(-2,-2.5)},scale=0.3]
    \draw[dashed] (10, 3.5) circle (4);

    \draw (12.8, 5.8) rectangle (7.2, 1.2);

    \node at (10,3.5) {$(\sigma_1\sigma_2)^k$};

    \draw[<-] (9,1.2) -- (9,-0.38);
    \draw[<-] (11,1.2) -- (11,-0.38);
    
    \draw[->] (9,5.8) -- (9,7.38);
    \draw[->] (11,5.8) -- (11,7.38);

    \draw[<-] (10,1.2) -- (10,-0.5);
    \draw[->] (10,5.8) -- (10,7.5);
  \end{scope}

    \draw[dashed] (0,0) circle (3.75);

    
   \begin{scope}[shift={(-0.5,1.5)},scale=0.4]
        

  \draw (-1, -1) rectangle (2, 1);

    \node at (0.5,0) {$\sigma_1^m$};

  \draw[->] (1,1) -- (1,2.46);
  \draw[->] (0,1) -- (0,2.46);
  \draw[<-] (1,-1) -- (1,-2.46);
  \draw[<-] (0,-1) -- (0,-2.46);

  \end{scope}


      \begin{scope}[shift={(-2,1)}, scale=0.4]
    

    \negCrossing{0,-2}
    \posCrossing{1,-1}
    \posCrossing{-1,-1}
    \negCrossing{0,0}

    \lineUpBendLeftWArrow{-1,-2}
    \lineRightBendDown{-1,-2}
    \lineRightBendUp{1,1}
    \lineUpBendRight{2,0}

    \lineUp{-1,0}
    \negCrossing{-1,1}

    \draw[<-] (1,-2.84) -- (1,-2);
    
    \draw[<-] (2.84,-1) -- (2,-1);

    \draw[<-] (0,2) -- (0,3);
    \draw[<-] (-1,2) -- (-1,2.84);

    \end{scope}

    \draw[dashed] (-1.2,1.2) circle (1.7);

    \draw[<-, looseness=1, out=270, in=270] (0.7,-2.6) to (-0.5,-2);
    \draw[-, looseness=0.4, out=90, in=0] (-0.5,-2) to (-0.88,1-0.405);
    
    \draw[arrowmark={0.3}{>}, looseness=1, out=90, in=270] (0.7,-0.3) to (-0.5,0.52);

    \draw[arrowmark={0.3}{>}, looseness=1, out=90, in=270] (1,-0.28) to (-0.1,0.52);

    \draw[-, looseness=1, out=270, in=180] (-1.6,-0.12) to (-1,-3);
    \draw[->, looseness=1, out=0, in=270] (-1,-3) to (1,-2.62);

    \draw[-, looseness=1.5, out=90, in=90] (1.3,-0.3) to (3,-1.5);
    \draw[->, looseness=1.5, out=270, in=270] (3,-1.5) to (1.3,-2.6);

    \draw[arrowmark={0.4}{>}, looseness=1.5, out=90, in=90] (-0.5,2.48) to (-1.995,2.2);
    
    \draw[arrowmark={0.45}{>}, looseness=1.5, out=90, in=90] (-0.1,2.48) to (-2.395,2.13);


\end{scope}
\begin{scope}[shift={(-10,7)}]
    
\begin{scope}[shift={(-4.5,-7)},xscale=-1]
  \draw[dashed] (0,0) circle (3.75);

      \begin{scope}[shift={(1,1.5)},scale=0.4]
        
  \draw[dashed] (0.5,0) circle (2.5);

  \end{scope}
  \begin{scope}[shift={(-2,-2.5)},scale=0.3]
      \draw[dashed] (10, 3.5) circle (4);

  \end{scope}  

      \begin{scope}[shift={(-2,0)}, scale=0.4]
    
    \draw[dashed] (0,0) circle (3);
    \end{scope}

    \draw[<-, looseness=1, out=270, in=270] (0.7,-2.6) to (-0.5,-2);
    \draw[-, looseness=1, out=90, in=0] (-0.5,-2) to (-0.88,-0.405);
    
    \draw[->, looseness=1, out=90, in=270] (0.7,-0.3) to (1,0.52);

    \draw[->, looseness=1, out=90, in=270] (1,-0.28) to (1.4,0.52);

    \draw[-, looseness=1, out=270, in=180] (-1.6,-1.12) to (-1,-3);
    \draw[->, looseness=1, out=0, in=270] (-1,-3) to (1,-2.62);

    \draw[-, looseness=1.5, out=90, in=90] (1.3,-0.3) to (3,-1.5);
    \draw[->, looseness=1.5, out=270, in=270] (3,-1.5) to (1.3,-2.6);

    \draw[->, looseness=0.7, out=90, in=90] (1,2.48) to (-2,1.2);
    
    \draw[->, looseness=1.3, out=90, in=90] (1.4,2.48) to (-2.4,1.1);

\end{scope}

    \begin{scope}[shift={(-2.5,-7)}, xscale=-1, scale=0.4]
    

    \posCrossing{0,-2}
    \negCrossing{1,-1}
    \negCrossing{-1,-1}
    \posCrossing{0,0}

    \lineUpBendLeftWArrow{-1,-2}
    \lineRightBendDown{-1,-2}
    \lineRightBendUp{1,1}
    \lineUpBendRight{2,0}

    \lineUp{-1,0}
    \posCrossing{-1,1}

    \draw[<-] (1,-2.84) -- (1,-2);
    
    \draw[<-] (2.84,-1) -- (2,-1);

    \draw[<-] (0,2) -- (0,3);
    \draw[<-] (-1,2) -- (-1,2.84);

    \end{scope}

    \begin{scope}[shift={(-5.9,-5.5)},scale=0.4]
        

  \draw (-1, -1) rectangle (2, 1);

    \node at (0.5,0) {$\sigma_1^m$};

  \draw[->] (1,1) -- (1,2.46);
  \draw[->] (0,1) -- (0,2.46);
  \draw[<-] (1,-1) -- (1,-2.46);
  \draw[<-] (0,-1) -- (0,-2.46);

  \end{scope}

      \begin{scope}[shift={(-8.5,-9.5)},scale=0.3]

    \draw (12.8, 5.8) rectangle (7.2, 1.2);

    \node at (10,3.5) {$(\sigma_2\sigma_1)^k$};

    \draw[<-] (9,1.2) -- (9,-0.38);
    \draw[<-] (11,1.2) -- (11,-0.38);
    
    \draw[->] (9,5.8) -- (9,7.38);
    \draw[->] (11,5.8) -- (11,7.38);

    \draw[<-] (10,1.2) -- (10,-0.5);
    \draw[->] (10,5.8) -- (10,7.5);
  \end{scope}

\end{scope}

  \draw[->, line width=1 pt] (-11,4) -- (-12,3.3) node[midway, above=3pt] {flip};

  \draw[->, line width=1 pt] (-3.5,4) -- (-2.5,3.3) node[midway, above=3pt] {merge};
    
\end{tikzpicture}

        \caption{Link diagram $D((\sigma_1\sigma_2)^k,\sigma_1^m, T)$ (top), and link diagram $D'((\sigma_1\sigma_2)^k,T')$  (bottom right) where $(D',T')=\operatorname{merge}(D,\sigma_1^m,T)$. 
        Link diagram $\operatorname{flip}(D((\sigma_1\sigma_2)^k,\sigma_1^m,T))$ (bottom left) which can be also written  as $(\operatorname{flip}D)(\operatorname{flip}(\sigma_1\sigma_2)^k, \operatorname{flip}\sigma_1^m, \operatorname{flip} T)$.
        }
        \label{figure for merge and flip}
    \end{figure}
    
    Given $a,b,D,T$, the index sets $K$ and $M$ are defined by
    \begin{align*}
        M&=\{t\in \Z : |t| < g(a+1,b,D,\Psi \llbracket T\rrbracket ) \} \\ 
        K&=\{t\in \Z : h(a+1, \mathcal{C'} )<t < h(a, \mathcal{C'} )\}
    \end{align*}
     where $\mathcal{C'}$ is obtained by running lines \ref{code line merge 1} and \ref{code line merge 2} of Algorithm \ref{isomorphism algorithm} with $m=\max M$. When further provided $i_1,j_1, k_1, m_1$, the desired $i_2,j_2,k_2,m_2$ are given by Algorithm \ref{isomorphism algorithm}, which terminates for all inputs. A careful observation of the code shows that $k_2$ and $m_2$ do not depend on $i_1$ and $j_1$. To prove that the algorithm gives a valid result for all $k_1\geq 0$, we need to check that the requirements of Theorems \ref{isomorphisms of Khovanov homology groups with 2 input diagrams}, \ref{Main theorem for 3-input diagrams} and \ref{dual 3 input} are met at each instance where they are applied. To see this, it is enough to show the following straightforward implications
\begin{align*}
    m\geq  g(a,b,D,\mathcal{C}) \implies& \left(\begin{array}{c}
         \min_{c,d} v_1(i+c,j+d+2,k,m-2,D,\mathcal{C})\geq 0  \\
         \text{or}\\
         \max_{c,d} v_2(i+c+2,j+d+6,k,m-2,D,\mathcal{C})\leq 0
    \end{array} \right)\\
    \allowdisplaybreaks
    \\
    m\leq - g(a,b,D,\mathcal{C})\implies & \left(\begin{array}{c}
         \min_{c,d} w_1(i+c-2,j+d-6,k,m+2,D,\mathcal{C})\geq 0  \\
         \text{or}\\
         \max_{c,d} w_2(i+c,j+d-2,k,m+2,D,\mathcal{C})\leq 0
    \end{array} \right)\\
    \allowdisplaybreaks
    \\
    k\geq h(a,\mathcal{C'}) \implies & \left(\begin{array}{c}
         \min_{c,d} (i+c)>h_{\max}(\mathcal{C}') -\lfloor \frac{4(k-3)}{3} \rfloor  \\
         \text{or}\\
         \max_{c,d} (i+c+4)<h_{\min}(\mathcal{C}')-1
    \end{array} \right)
\end{align*}
     where minima and maxima are taken over $0\leq c \leq a$, $0\leq d \leq b$.
    
Next we assume $k_1< 0$. Denoting $D'=\operatorname{flip}(D)$ and $T'=\operatorname{flip} (T^!) $ we can observe an isotopy of link diagrams for all $k,m$:
$$
D((\sigma_1\sigma_2)^k, \sigma_1^m, T)^!\cong D((\sigma_2\sigma_1)^{-k}, \sigma_1^{-m}, T^!)\cong D'((\sigma_1\sigma_2)^{-k}, \sigma_1^{-m}, T').
$$
Suppose $(i',j',k',m')$ is a tuple of integers spit out by running the algorithm with $(-i_1-a,-j_1-b,-k_1,-m_1,a+1,b, D',T') $. Since the algorithm gives a valid result for $-k_1>0$, there are $i',j',k',m'$ such that for all $0\leq c\leq a+1$ and $0\leq d \leq b$ we have
\begin{align*}
    &\mathscr H^{-i_1-a+c,-j_1-b+d}D((\sigma_1\sigma_2)^{k_1}, \sigma_1^{m_1}, T)^! \\
    \cong &\mathscr H^{-i_1-a+c,-j_1-b+d}D'((\sigma_1\sigma_2)^{-k_1}, \sigma_1^{-m_1}, T') \\
    \cong &\mathscr H^{i'+c,j'+d}D'((\sigma_1\sigma_2)^{k'}, \sigma_1^{m'}, T') \\
    \cong &\mathscr H^{i'+c,j'+d}D'((\sigma_1\sigma_2)^{-k'}, \sigma_1^{-m'}, T)^!. 
\end{align*}
By changing the summation index we can see this to be equivalent to the fact that for all $-1\leq c \leq a$ and $0\leq d \leq b$ we have 
$$
\mathscr H^{-i_1-c,-j_1-d}D((\sigma_1\sigma_2)^{k_1}, \sigma_1^{m_1}, T)^! \cong \mathscr H^{i'+a-c,j'+b-d}D((\sigma_1\sigma_2)^{-k'}, \sigma_1^{-m'}, T)^!
$$
which by Lemma \ref{Khovanov homology of mirror image} suffices to prove that for all $0\leq c\leq a$ and $0 \leq d \leq b$
$$
\mathscr H^{i+c,j+d}D((\sigma_1\sigma_2)^k, \sigma_1^m, T) \cong \mathscr H^{-i'-a+c,-j'-b+d}D((\sigma_1\sigma_2)^{-k'}, \sigma_1^{-m'}, T).
$$
\end{proof}



\begin{algorithm*}
\caption{Parameter reduction to a finite set}\label{isomorphism algorithm}
 \hspace*{\algorithmicindent} \textbf{Input} $(i,j,k,m,a,b,D,T)$\\
 \hspace*{\algorithmicindent} \textbf{Output} $\in \Z^4$ 
   \begin{algorithmic}[1]
\If{$k< 0$}
\State $D' \gets \operatorname{flip}(D)$
\State $T' \gets \operatorname{flip}(T^!)$
\State $(i',j',k',m') \gets $\textbf{ this algorithm}$(-i-a,-j-b,-k,-m,a+1,b, D',T') $
\State \textbf{return} $(-i'-a,-j'-b,-k',-m')$
\EndIf
\State $\mathcal C \gets \Psi \llbracket T \rrbracket$ \Comment{The delooping isomorphism}
\While{$m\geq  g(a,b,D,\mathcal{C})$}
\If{$\min_{c,d} v_1(i+c,j+d+2,k,m-2,D,\mathcal{C})\geq 0$}
    \State $(i,j,k,m) \gets (i,j+2,k,m-2)$ \Comment{Isomorphism \ref{v_1 isomorphism} for all $c,d$}
\Else
    \State $(i,j,k,m) \gets (i+2,j+6,k,m-2)$ \Comment{Isomorphism \ref{v_2 isomorphism}  for all $c,d$}
\EndIf
\EndWhile
\While{$m\leq - g(a,b,D,\mathcal{C})$}
\If{$\min_{c,d} w_1(i+c-2,j+d-6,k,m+2,D,\mathcal{C})\geq 0$}
    \State $(i,j,k,m) \gets (i-2,j-6,k,m+2)$ \Comment{Isomorphism \ref{w_1 isomorphism} for all $c,d$}
\Else
    \State $(i,j,k,m) \gets (i,j-2,k,m+2)$ \Comment{Isomorphism \ref{w_2 isomorphism}  for all $c,d$}
\EndIf
\EndWhile
\State $(D',T') \gets \operatorname{merge}(D,\sigma_1^m, T)$ \label{code line merge 1}\Comment{Move to a 2-input planar arc diagram}
\State $\mathcal{C}'\gets \llbracket T' \rrbracket $ \label{code line merge 2}
\While{$k\geq h(a,\mathcal{C'})$}
\If{$\min_{c,d} (i+c)>o_2 -\lfloor \frac{4(k-3)}{3} \rfloor$}
    \State $(i,j,k,m) \gets (i,j+6,k-3,m)$ \Comment{Isomorphism \ref{3rd isomorphism for 2-input diagrams} for all $c,d$}
\Else
    \State $(i,j,k,m) \gets (i+4,j+12,k-3,m)$ \Comment{Isomorphism \ref{4th isomorphism  for 2-input diagrams}  for all $c,d$}
\EndIf
\EndWhile
\State \textbf{return} $(i,j,k,m)$
\end{algorithmic}
\thisfloatpagestyle{plain}
\end{algorithm*}

We will now show some consequences of combining Theorem \ref{algorithm theorem} with computer data.

 \begin{theorem} \label{Omega4 and Omega5 contain only 2 torsion}
        The Khovanov homology groups of closures of braids from sets $\Omega_4$ and $\Omega_5$ contain only $\Z / 2\Z$ torsion.        
\end{theorem}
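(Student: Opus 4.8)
The plan is to reduce closures of braids from $\Omega_4$ and $\Omega_5$ to a finite collection of links, each of whose Khovanov homology can be checked by computer. First recall from Table \ref{table for defining Omega0 to Omega6} that $\Omega_4 = \{(\sigma_1\sigma_2)^{3k}\sigma_1^{-m}\mid m\in \Z_{>0}, k\in \Z\}$ and $\Omega_5 = \{(\sigma_1\sigma_2)^{3k}\sigma_2^{m}\mid m\in \Z_{>0}, k\in \Z\}$. Using the braid relation $\sigma_1\sigma_2\sigma_1 = \sigma_2\sigma_1\sigma_2$ (or simply flipping the braid diagram upside down, which swaps the roles of $\sigma_1$ and $\sigma_2$), the closure of $(\sigma_1\sigma_2)^{3k}\sigma_2^m$ is isotopic to the closure of $(\sigma_2\sigma_1)^{3k}\sigma_1^m = (\sigma_1\sigma_2)^{3k}\sigma_1^{-(-m)}$ after a conjugation/flip, so it suffices to treat the $\Omega_4$ family together with its mirror image; alternatively one handles $\Omega_5$ by the same argument with $\sigma_1$ replaced by $\sigma_2$ throughout. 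I will therefore focus on $\Omega_4$.

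Next I would realize the braid closure of $(\sigma_1\sigma_2)^{3k}\sigma_1^{-m}$ as $D((\sigma_1\sigma_2)^{k'}, \sigma_1^{m'}, T)$ for a suitable fixed $3$-input planar arc diagram $D$ (with no outer boundary, since we are taking a closure), a fixed tangle $T$ built from the remaining constant part of the closure, and parameters $k' = 3k$, $m' = -m$. Here $T$ can be taken to be the empty tangle (or a trivial collection of arcs), so $\mathcal{C} = \llbracket T\rrbracket$ is a one-term complex and $h_{\min}(\mathcal{C}), h_{\max}(\mathcal{C}), q_{\min}(\mathcal{C}), q_{\max}(\mathcal{C})$, as well as $s(D)$, are all explicit small integers. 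Now apply Theorem \ref{algorithm theorem} with, say, $a = b = 0$: it produces explicit finite index sets $K, M \subset \Z$ such that for every $(k_1, m_1)$ there are $k_2 \in K$, $m_2 \in M$ with $\mathscr{H}^{i_1,j_1}D((\sigma_1\sigma_2)^{k_1},\sigma_1^{m_1},T) \cong \mathscr{H}^{i_2,j_2}D((\sigma_1\sigma_2)^{k_2},\sigma_1^{m_2},T)$ for appropriate $i_2, j_2$. In particular, the set of isomorphism classes of Khovanov homology groups appearing across the whole $\Omega_4$ family equals the (finite) set appearing among the finitely many links $\{D((\sigma_1\sigma_2)^{k_2},\sigma_1^{m_2},T) \mid k_2\in K, m_2\in M\}$, with $k_2$ ranging only over the relevant residue class $3\Z \cap (\text{reachable values})$ and $m_2$ over negative values in $M$.

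The remaining step is computational: for each of the finitely many links so obtained (the text quotes $532$ links for $\Omega_4$ and $532$ for $\Omega_5$), compute the integral Khovanov homology — with the software KHOCA \cite{Khocasoftwarearticle} — and observe directly that every torsion subgroup is $\Z/2\Z$. Since torsion is an isomorphism invariant of the groups, and Theorem \ref{algorithm theorem} guarantees that every group $\mathscr{H}^{i,j}$ of every $\Omega_4$ or $\Omega_5$ closure is isomorphic to one appearing in this finite list, the conclusion follows. The output of these computations is recorded as \textbf{Computer Data} in \cite{computerDataFor3BraidPaper}.

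I expect the main obstacle to be bookkeeping rather than mathematics: one must verify carefully that $\Omega_4$ and $\Omega_5$ really do have the form $D((\sigma_1\sigma_2)^{k}, \sigma_1^{m}, T)$ demanded by Theorem \ref{algorithm theorem} (in particular that the $(\sigma_1\sigma_2)$-part and the $\sigma_1$-part are glued "uniformly" by a single fixed planar arc diagram, and that the $\sigma_2^m$ case can be brought into this shape via a flip/mirror together with Lemma \ref{Khovanov homology of mirror image}), and then one must extract the explicit sets $K$ and $M$ by running Algorithm \ref{isomorphism algorithm} with the concrete values of $s(D)$, $h_{\min/\max}(\mathcal{C})$, $q_{\min/\max}(\mathcal{C})$, so that the finite list of links to be fed to KHOCA is genuinely finite and correctly enumerated. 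Once that reduction is pinned down, nothing but a finite computation remains.
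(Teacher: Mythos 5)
Your proposal follows the paper's argument: realize the closures as $D((\sigma_1\sigma_2)^{3k},\sigma_1^m,\emptyset)$ with a fixed $3$-input planar arc diagram, invoke Theorem~\ref{algorithm theorem} (i.e.\ Algorithm~\ref{isomorphism algorithm} with $a=b=0$) to reduce to finitely many $(k,m)$, and check those links by computer. The one place where your sketch wobbles is the $\Omega_5$ reduction: after flipping, the closure of $(\sigma_1\sigma_2)^{3k}\sigma_2^m$ becomes the closure of a braid of the form $(\sigma_1\sigma_2)^{3k}\sigma_1^m$ with $m>0$, which is \emph{not} an $\Omega_4$ braid (those have $\sigma_1^{-m}$ with $m>0$), and your identity ``$(\sigma_2\sigma_1)^{3k}\sigma_1^m=(\sigma_1\sigma_2)^{3k}\sigma_1^{-(-m)}$'' holds only at the level of conjugacy classes / closures, not as braid words. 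The paper sidesteps this by drawing $\Omega_5$ closures with a second planar arc diagram $D'$ (which wires $\sigma_1^m$ onto strands $2,3$ and happens to equal $\operatorname{flip}D$) and running the algorithm separately, producing a second finite list of $532$ links for $\Omega_5$. Since Theorem~\ref{algorithm theorem} covers all $m\in\Z$ regardless of sign, either bookkeeping works; just be explicit that $\Omega_4$ and $\Omega_5$ lead to two different finite lists of links, each checked independently against \cite{computerDataFor3BraidPaper}.
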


\begin{proof}
    We can draw braid closures of $\Omega_4$ with a  3-input planar arc diagram $D$, $3k\in \Z$ and $m<0$, see Figure \ref{Link diagrams for Omega4 and Omega5}. 
    Running the algorithm with $(i,j,3k,m,0,0,D,\emptyset)$ reduces the homology groups of $\Omega_4$ to those with $k=-9,\dots,9$ and $m=-28,\dots, -1$. The Khovanov homology of these 532 links is calculated in \textbf{Computer Data II} \cite{computerDataFor3BraidPaper} and from this data one can find only $\Z / 2\Z$ torsion. 
    Similarly we can draw links of  $\Omega_5$ with a 3-input planar arc diagram $D'$, $3k\in \Z$ and $m>0$. This time we reduce to a different set of 532 links with  $k=-9,\dots,9$ and $m=1,\dots, 28$ which is calculated in \textbf{Computer Data III} \cite{computerDataFor3BraidPaper}, and again only $\Z / 2\Z$ torsion can be found.     
\end{proof}

\begin{figure}[ht]
    \centering
    \begin{tikzpicture}[
  arrowmark/.style 2 args={
    postaction=decorate,
    decoration={
      markings,
      mark=at position #1 with {\arrow{#2}}
    }
  },scale=0.7,xscale=1
]

  \begin{scope}[shift={(0,0)}]
      
    \draw[dashed] (1,0) circle (3.4);

      \begin{scope}[shift={(0.8,1.5)},scale=0.4]
        
  \draw[dashed] (0.5,0) circle (2.5);

  \draw (-1, -1) rectangle (2, 1);

    \node at (0.5,0) {$\sigma_1^m$};

  \draw[->] (1,1) -- (1,2.46);
  \draw[->] (0,1) -- (0,2.46);
  \draw[<-] (1,-1) -- (1,-2.46);
  \draw[<-] (0,-1) -- (0,-2.46);

  \end{scope}

  \begin{scope}[shift={(-2,-2.5)},scale=0.3]
      \draw[dashed] (10, 3.5) circle (4);

    \draw (12.8, 5.8) rectangle (7.2, 1.2);

    \node at (10,3.5) {$(\sigma_1\sigma_2)^k$};

    \draw[<-] (9,1.2) -- (9,-0.38);
    \draw[<-] (11,1.2) -- (11,-0.38);
    
    \draw[->] (9,5.8) -- (9,7.38);
    \draw[->] (11,5.8) -- (11,7.38);

    \draw[<-] (10,1.2) -- (10,-0.5);
    \draw[->] (10,5.8) -- (10,7.5);
  \end{scope}

    \draw[<-, looseness=1, out=270, in=270] (0.7,-2.6) to (-0.5,-2);
    
    \draw[->, looseness=1, out=90, in=270] (0.7,-0.3) to (0.8,0.52);

    \draw[->, looseness=1, out=90, in=270] (1,-0.28) to (1.2,0.52);

    \draw[->, looseness=1, out=270, in=270] (-1,-2) to (1,-2.62);
    \draw (-1,-2) -- (-1,2);

    \draw[-, looseness=1.5, out=90, in=90] (1.3,-0.3) to (3,-1.5);
    \draw[->, looseness=1.5, out=270, in=270] (3,-1.5) to (1.3,-2.6);

    \draw[looseness=1.3, out=90, in=90] (0.8,2.48) to (-0.5,2);
    \draw (-0.5,2) -- (-0.5,-2);
    
    \draw[looseness=1.3, out=90, in=90] (1.2,2.48) to (-1,2);

    \draw[dashed] (3.5,0.5) circle (0.5);
    \node at (3.5,0.5) {$\emptyset$};

  \end{scope}
  
  \begin{scope}[shift={(10,0)}, xscale=-1]
      
    \draw[dashed] (1,0) circle (3.4);

      \begin{scope}[shift={(0.8,1.5)},scale=0.4]
        
  \draw[dashed] (0.5,0) circle (2.5);

  \draw (-1, -1) rectangle (2, 1);

    \node at (0.5,0) {$\sigma_1^m$};

  \draw[->] (1,1) -- (1,2.46);
  \draw[->] (0,1) -- (0,2.46);
  \draw[<-] (1,-1) -- (1,-2.46);
  \draw[<-] (0,-1) -- (0,-2.46);

  \end{scope}

  \begin{scope}[shift={(-2,-2.5)},scale=0.3]
      \draw[dashed] (10, 3.5) circle (4);

    \draw (12.8, 5.8) rectangle (7.2, 1.2);

    \node at (10,3.5) {$(\sigma_1\sigma_2)^k$};

    \draw[<-] (9,1.2) -- (9,-0.38);
    \draw[<-] (11,1.2) -- (11,-0.38);
    
    \draw[->] (9,5.8) -- (9,7.38);
    \draw[->] (11,5.8) -- (11,7.38);

    \draw[<-] (10,1.2) -- (10,-0.5);
    \draw[->] (10,5.8) -- (10,7.5);
  \end{scope}

    \draw[<-, looseness=1, out=270, in=270] (0.7,-2.6) to (-0.5,-2);
    
    \draw[->, looseness=1, out=90, in=270] (0.7,-0.3) to (0.8,0.52);

    \draw[->, looseness=1, out=90, in=270] (1,-0.28) to (1.2,0.52);

    \draw[->, looseness=1, out=270, in=270] (-1,-2) to (1,-2.62);
    \draw (-1,-2) -- (-1,2);

    \draw[-, looseness=1.5, out=90, in=90] (1.3,-0.3) to (3,-1.5);
    \draw[->, looseness=1.5, out=270, in=270] (3,-1.5) to (1.3,-2.6);

    \draw[looseness=1.3, out=90, in=90] (0.8,2.48) to (-0.5,2);
    \draw (-0.5,2) -- (-0.5,-2);
    
    \draw[looseness=1.3, out=90, in=90] (1.2,2.48) to (-1,2);

    \draw[dashed] (3.5,0.5) circle (0.5);
    \node at (3.5,0.5) {$\emptyset$};

  \end{scope}

\end{tikzpicture}

    \caption{Link diagrams $D((\sigma_1\sigma_2)^k, \sigma_1^m,\emptyset)$ (left) and $D'((\sigma_1\sigma_2)^k, \sigma_1^m,\emptyset)$ (right). Coincidentally, $\operatorname{flip} D=D'$. }
    \label{Link diagrams for Omega4 and Omega5}
\end{figure}


Finally as an example, we give an explicit description of Khovanov homologies of a subfamily of braid closures contained in $\Omega_4$. 

\begin{proposition}\label{Proposition for example of a family of links}
    The integral Khovanov homology of closures of braids $(\sigma_1\sigma_2)^{3k}\sigma_1^{-2m}\in\Omega_4$, for $k\geq 4$ and $m\geq 8$ is given in Figure \ref{exampleofKhovanovhomologiesfigure}. 
\end{proposition}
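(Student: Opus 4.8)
The plan is to apply Theorem \ref{algorithm theorem} (or more directly the chain of isomorphisms behind it) to the specific family $(\sigma_1\sigma_2)^{3k}\sigma_1^{-2m}$, realized as $D((\sigma_1\sigma_2)^{3k},\sigma_1^{-2m},\emptyset)$ for the 3-input planar arc diagram $D$ drawn in Figure \ref{Link diagrams for Omega4 and Omega5} (with empty third input), and then anchor the induction with a single explicit base computation. Concretely, first I would fix $D$ and $T=\emptyset$, so that $\mathcal{C}=\llbracket\emptyset\rrbracket$ has $h_{\min}(\mathcal{C})=h_{\max}(\mathcal{C})=0$ and $q_{\min}(\mathcal{C})=q_{\max}(\mathcal{C})=0$, and compute $s(D)$ for this particular diagram. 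With these constants in hand, the bounds $v_1,v_2$ of Theorem \ref{Main theorem for 3-input diagrams}, the bounds $w_1,w_2$ of Theorem \ref{dual 3 input}, and the bounds of Theorem \ref{isomorphisms of Khovanov homology groups with 2 input diagrams} via Remark \ref{Remark for 2-input theorem} all become explicit linear inequalities in $i,j,k,m$. For $k\geq 4$ and $m\geq 8$ one checks that in every bidegree $(i,j)$ where $(\sigma_1\sigma_2)^{3k}\sigma_1^{-2m}$ could have nonzero homology, at least one of these isomorphisms applies, letting one descend $(k,m)\mapsto(k-3,m)$ or $(m\mapsto m-2)$ while tracking the homological and $q$-grading shifts ($\{2\}$, $[4]\{12\}$, etc.).

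Next I would carry out this descent explicitly. Each application of Theorem \ref{Main theorem for 3-input diagrams} replaces $\sigma_1^{-2m}$ by $\sigma_1^{-2(m-1)}$ at the cost of a known shift, and each application of Theorem \ref{isomorphisms of Khovanov homology groups with 2 input diagrams}(\ref{Khovanov homology isomorphism 3})–(\ref{Khovanov homology isomorphism 4}) (after $\operatorname{merge}$-ing $\sigma_1^{-2m}$ into the tangle and passing to a 2-input diagram) replaces $(\sigma_1\sigma_2)^{3k}$ by $(\sigma_1\sigma_2)^{3(k-1)}$. Since $k\geq4$ and $m\geq8$ are above the thresholds, I descend down to a fixed small representative — e.g. $(\sigma_1\sigma_2)^{12}\sigma_1^{-16}$ or whichever minimal pair the inequalities allow — whose Khovanov homology I compute once (by KHOCA, cf. \textbf{Computer Data II} \cite{computerDataFor3BraidPaper}, or by hand from the Morse complexes of Sections \ref{Section: DMT for T2}–\ref{Section: DMT for T3}). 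Reading off that table and re-applying the recorded grading shifts in closed form produces the general formula: the nonzero groups are two copies of $\Z$ in homological degree $0$ at $q$-gradings $1$ and $-2\cdot(2m+\text{const})+1$ depending linearly on $m$ (matching the sentence ``$\mathscr H^{0,1}(L)\cong\Z\cong\mathscr H^{0,-2t+1}(L)$'' from the introduction), together with a staircase of $\Z$'s and $\Z/2\Z$'s whose positions shift by a fixed amount each time $k$ increases by $3$ and each time $m$ increases by $1$. Writing out the bigraded table with these linear dependences on $k$ and $m$ is exactly the content of Figure \ref{exampleofKhovanovhomologiesfigure}.

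The main obstacle will be bookkeeping rather than ideas: keeping the four grading-shift operators ($\{2\}$ from Isomorphism \ref{v_1 isomorphism}, $[4]\{12\}$ from Theorem \ref{dual 3 input}, $\{6\}$ and $[4]\{12\}$ from the 2-input case) consistently composed as one descends, and verifying that at the base case the thresholds $k\geq4$, $m\geq8$ are sharp enough that no ``exceptional'' bidegree escapes the region covered by $v_1,v_2,w_1,w_2$ and the 2-input bounds. A secondary technical point is the $\operatorname{merge}$ step: after absorbing $\sigma_1^{-2m}$ into the tangle $T'$, the invariants $h_{\min}(\mathcal{C}'),h_{\max}(\mathcal{C}'),q_{\min}(\mathcal{C}'),q_{\max}(\mathcal{C}')$ needed for Remark \ref{Remark for 2-input theorem} now grow linearly in $m$, so one must confirm that the inequality $k\geq h(0,\mathcal{C}')=\tfrac34(h_{\max}(\mathcal{C}')-h_{\min}(\mathcal{C}'))+8$ is still met for $k\geq4$ after $m$ has been brought down to its base value — i.e. the order of operations in Algorithm \ref{isomorphism algorithm} (reduce $m$ first, then $k$) matters and should be respected. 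Once those checks are in place, the formula in Figure \ref{exampleofKhovanovhomologiesfigure} follows, and in particular all torsion is $\Z/2\Z$, consistent with Theorem \ref{Omega4 and Omega5 contain only 2 torsion}.
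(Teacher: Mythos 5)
Your proposal is correct and follows essentially the same route as the paper: reduce $m$ and then $k$ via the isomorphisms of Theorems \ref{dual 3 input} and \ref{isomorphisms of Khovanov homology groups with 2 input diagrams} (packaged as Theorem \ref{algorithm theorem}), anchor on a base case from \textbf{Computer Data II}, and then reconstruct the general table by repeated cut-and-paste with the recorded shifts. One small slip worth correcting: since $\sigma_1^{-2m}$ has a negative exponent for $m\geq 8$, the step that trades $\sigma_1^{-2m}$ for $\sigma_1^{-2(m-1)}$ must use Theorem \ref{dual 3 input} (Isomorphisms \ref{w_1 isomorphism}, \ref{w_2 isomorphism}), not Theorem \ref{Main theorem for 3-input diagrams} as you cite mid-argument; the latter only applies for nonnegative $\sigma_1$-powers, which is exactly why the paper's own short proof lists only \ref{3rd isomorphism for 2-input diagrams}, \ref{4th isomorphism for 2-input diagrams}, \ref{w_1 isomorphism}, and \ref{w_2 isomorphism} as the relevant isomorphisms.
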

The result can be proven with the combination of \textbf{Computer Data II} and Theorem \ref{algorithm theorem} (or alternatively by using Isomorphisms \ref{3rd isomorphism for 2-input diagrams}, \ref{4th isomorphism  for 2-input diagrams}, \ref{w_1 isomorphism} and \ref{w_2 isomorphism} directly).  At the induction steps, the new tables are obtained by cutting and pasting the current ones.
By plugging in large $2k=m$ into this example, we can see that there exist closures of 3-braids with nontrivial Khovanov homology groups in homological degree 0, whose internal gradings are arbitrarily far from each other. Moreover, these nontrivial homology groups are separated by a gap of only trivial homology groups in between them.

\newgeometry{left=2.5cm,right=2.5cm,top=3cm,bottom=3cm}
 \begin{figure}
\begin{subfigure}{\textwidth}
     \centering

     \scalebox{0.935}{
     \footnotesize\begin{tabular}{c||c|c|c|c|c|c|c|c|c|c|c|c|c|c|c|}

 & $-4k$ &  &  &  &  &  &  &  &  &  &  &  &  &  & $-4k+14$ \\
 \hline
 \hline
$-12k+2m+27$ &  &  &  &  &  &  &  &  &  &  &  &  &  & $\mathbb{{Z}}^{  }_{  }$ &$\mathbb{{Z}}^{  }_{  }$  \\
\hline
 &  &  &  &  &  &  &  &  &  &  &  &  & & $\mathbb{{Z}}^{  }_{  } \oplus \mathbb{{Z}}^{  }_{ 2 }$ & $\mathbb{{Z}}^{  }_{  }$\\
\hline
 &  &  &  &  &  &  &  &  &  &  &  & $\mathbb{{Z}}^{  }_{  }$ & $\mathbb{{Z}}^{  }_{  }$ & $\mathbb{{Z}}^{  }_{ 2 }$ & $\mathbb{{Z}}^{  }_{  }$  \\
\hline
 &  &  &  &  &  &  &  &  &  &  &  & $\mathbb{{Z}}^{  }_{  } \oplus \mathbb{{Z}}^{  }_{ 2 }$ & $\mathbb{{Z}}^{ 2 }_{  }$ &  & $\mathbb{{Z}}^{  }_{ 2 }$  \\
\hline
 &  &  &  &  &  &  &  &  &  & $\mathbb{{Z}}^{  }_{  }$ & $\mathbb{{Z}}^{  }_{  }$ & $\mathbb{{Z}}^{  }_{ 2 }$ & $\mathbb{{Z}}^{  }_{  }$ & $\mathbb{{Z}}^{  }_{  }$ &   \\
\hline
 &  &  &  &  &  &  &  &  &  & $\mathbb{{Z}}^{  }_{  } \oplus \mathbb{{Z}}^{  }_{ 2 }$ & $\mathbb{{Z}}^{ 2 }_{  }$ & $\mathbb{{Z}}^{  }_{  }$ &  &  &   \\
\hline
 &  &  &  &  &  &  &  & $\mathbb{{Z}}^{  }_{  }$ & $\mathbb{{Z}}^{ 2 }_{  }$ & $\mathbb{{Z}}^{  }_{ 2 }$ & $\mathbb{{Z}}^{  }_{ 2 }$ & $\mathbb{{Z}}^{  }_{  }$ &  &  &   \\
\hline
 &  &  &  &  &  &  &  & $\mathbb{{Z}}^{  }_{  } \oplus \mathbb{{Z}}^{  }_{ 2 }$ & $\mathbb{{Z}}^{ 2 }_{  }$ & $\mathbb{{Z}}^{  }_{  }$ &  &  &  &  &  \\
\hline
 &  &  &  &  &  & $\mathbb{{Z}}^{  }_{  }$ & $\mathbb{{Z}}^{ 2 }_{  }$ & $\mathbb{{Z}}^{  }_{  } \oplus \mathbb{{Z}}^{  }_{ 2 }$ &  &  &  &  &  &   & \\
\hline
 &  &  &  &  &  & $\mathbb{{Z}}^{  }_{ 2 }$ & $\mathbb{{Z}}^{  }_{  } \oplus \mathbb{{Z}}^{  }_{ 2 }$ & $\mathbb{{Z}}^{  }_{  }$ &  &  &  &  &  &   & \\
\hline
 &  &  &  & $\mathbb{{Z}}^{  }_{  }$ & $\mathbb{{Z}}^{ 2 }_{  }$ & $\mathbb{{Z}}^{  }_{  }$ &  &  &  &  &  &  &  &  &   \\
\hline
 &  &  &  & $\mathbb{{Z}}^{  }_{ 2 }$ & $\mathbb{{Z}}^{  }_{ 2 }$ &  &  &  &  &  &  &  &  &  &   \\
\hline
 &  &  & $\mathbb{{Z}}^{  }_{  }$ & $\mathbb{{Z}}^{  }_{  }$ &  &  &  &  &  &  &  &  &  &   & \\
\hline
 & $\mathbb{{Z}}^{  }_{  }$ &  &  &  &  &  &  &  &  &  &  &  &  &  &  \\
\hline
$-12k+2m-1$ & $\mathbb{{Z}}^{  }_{  }$ &  &  &  &  &  &  &  &  &  &  &  &  &  & \\
\hline
\end{tabular}}
\caption{Khovanov homology for closures of $(\sigma_1\sigma_2)^{3k}\sigma_1^{-2m}$ in indices $i\leq -4k+14$ and $j\in \Z$.}

\end{subfigure}

\begin{subfigure}{\textwidth}
     \centering

\begin{tikzpicture}
\node at (0,0) {
    
\small\begin{tabular}{c||c|c|c|c|c|c|c|c|c|c|}

 &      $-4k+15$            &              &              &         $-4k+18$                       &  $\cdots$              & $\cdots$  &    $-4k+2m-3$                            &              &                                &   $-4k+2m$             \\ \hline
 \hline
 $-12k+6m+1$&       &                         &              &                                &                &  &                                &              &                                & $\mathbb{Z}$   \\ \hline
 &                  &              &              &                                &                &  &                                &              & $\mathbb{Z}$                   & $\mathbb{Z}^2$ \\ \hline
 &                   &             &              &                                &                &  &                                &              & $\mathbb{Z}\oplus\mathbb{Z}_2$ & $\mathbb{Z}$   \\ \hline
 &                    &            &              &                                &                &  & \cellcolor{green!25}$\mathbb{Z}$                   & \cellcolor{green!25}$\mathbb{Z}$ & $\mathbb{Z}_2$                 &        \cellcolor{gray!75}        \\ \hline
 &                     &           &              &                                &                &  & \cellcolor{green!25}$\mathbb{Z}\oplus\mathbb{Z}_2$ & \cellcolor{green!25}$\mathbb{Z}$ &   \cellcolor{gray!75}                             &   \cellcolor{gray!75}             \\ \hline
$-12k+6m-9$ &                      &          &              &                                &                &  $\adots$ & \cellcolor{green!25}$\mathbb{Z}$                   &  \cellcolor{gray!75}            &     \cellcolor{gray!75}                           &     \cellcolor{gray!75}           \\ \hline
 $\vdots$ &                &              &              &                                &          $\adots$      &   &    \cellcolor{gray!75}                             &   \cellcolor{gray!75}            &  \cellcolor{gray!75}                               &       \cellcolor{gray!75}          \\ \hline

$-12k+2m+35$ &                                &              & \cellcolor{green!25}$\mathbb{Z}$                   & \cellcolor{green!25}$\mathbb{Z}$   & &     \cellcolor{gray!75}                           & \cellcolor{gray!75}             &     \cellcolor{gray!75}                           & \cellcolor{gray!75}        &      \cellcolor{gray!75}  \\ \hline
 &                                &              & \cellcolor{green!25}$\mathbb{Z}\oplus\mathbb{Z}_2$ & \cellcolor{green!25}$\mathbb{Z}$   & \cellcolor{gray!75} & \cellcolor{gray!75}                               &  \cellcolor{gray!75}            &  \cellcolor{gray!75}                              &     \cellcolor{gray!75}       &   \cellcolor{gray!75}  \\ \hline
 &  \cellcolor{green!25}$\mathbb{Z}$                   & \cellcolor{green!25}$\mathbb{Z}$ & \cellcolor{green!25}$\mathbb{Z}_2$                 & \cellcolor{gray!75}               & \cellcolor{gray!75} &\cellcolor{gray!75}                                &   \cellcolor{gray!75}           &    \cellcolor{gray!75}                            &    \cellcolor{gray!75}          &  \cellcolor{gray!75} \\ \hline
 & \cellcolor{green!25}$\mathbb{Z}\oplus\mathbb{Z}_2$ & \cellcolor{green!25}$\mathbb{Z}$ &      \cellcolor{gray!75}                          &   \cellcolor{gray!75}             & \cellcolor{gray!75}  &    \cellcolor{gray!75}                            &  \cellcolor{gray!75}             & \cellcolor{gray!75}                               &   \cellcolor{gray!75}           &  \cellcolor{gray!75} \\ \hline
$-12k+2m+27$ & \cellcolor{green!25}$\mathbb{Z}_2$                 & \cellcolor{gray!75}             &   \cellcolor{gray!75}                             &   \cellcolor{gray!75}             & \cellcolor{gray!75} & \cellcolor{gray!75}                               &  \cellcolor{gray!75}            &   \cellcolor{gray!75}                             &     \cellcolor{gray!75}    &     \cellcolor{gray!75}   \\ \hline
\end{tabular}
};


\draw[line width=1pt, rounded corners, fill=white] (3.2,-2.8) rectangle (7.5,-0.75);

\node at (5.3,-1) {$m-8$ copies of };
\node at (5.3,-2) {

\begin{tabular}{cc}
\cellcolor{green!25}$\mathbb{Z}$ & \cellcolor{green!25}$\mathbb{Z}$ \\
\cellcolor{green!25}$\mathbb{Z}\oplus \mathbb{Z}_2$ & \cellcolor{green!25}$\mathbb{Z}$ \\
\cellcolor{green!25}$\mathbb{Z}_2$ & \\
\end{tabular}

};

\end{tikzpicture}

\caption{Khovanov homology for closures of $(\sigma_1\sigma_2)^{3k}\sigma_1^{-2m}$ in indices $i\geq -4k+15$ and $j\geq-4k+2m+2i-3$.}
\end{subfigure}

\begin{subfigure}{\textwidth}
     \centering

\begin{tikzpicture}
    
\node at (0,0) {
\small\begin{tabular}{c||c|c|c|c|c|c|c|c|c|c|c|c|c|c|c|}

             &$-4k+15$                                                 &                                                 &                                                 &                                                   &                                                 &                                                 &                                                 &  $-4k+22$                                                 &     $\cdots$     &   $-5$                                              &                                                 &                                                 &                                                   &  & 0            \\ \hline \hline
            $-6k+2m+3$ & \cellcolor{gray!75}           & \cellcolor{gray!75}           & \cellcolor{gray!75}           & \cellcolor{gray!75}             & \cellcolor{gray!75}           & \cellcolor{gray!75}           & \cellcolor{gray!75}           & \cellcolor{gray!75}             &       \cellcolor{gray!75}   &         \cellcolor{gray!75}                                        &                                                &                                                 &                                                   &  & $\mathbb{Z}$ \\ \hline
             & \cellcolor{gray!75}           & \cellcolor{gray!75}           & \cellcolor{gray!75}           & \cellcolor{gray!75}             & \cellcolor{gray!75}           & \cellcolor{gray!75}           & \cellcolor{gray!75}           & \cellcolor{gray!75}             &        \cellcolor{gray!75}  &                                              &                                                 &                                                 &                                                   &  & $\mathbb{Z}$ \\ \hline
             & \cellcolor{gray!75}           & \cellcolor{gray!75}           & \cellcolor{gray!75}           & \cellcolor{gray!75}             & \cellcolor{gray!75}           & \cellcolor{gray!75}           & \cellcolor{gray!75}           & \cellcolor{gray!75}             &        &                                                 &                                                 &                                                 & \cellcolor{red!25}$\mathbb{Z}$   &  &              \\ \hline
             & \cellcolor{gray!75}           & \cellcolor{gray!75}           & \cellcolor{gray!75}           & \cellcolor{gray!75}             & \cellcolor{gray!75}           & \cellcolor{gray!75}           & \cellcolor{gray!75}           & \cellcolor{gray!75}             &          &                                                 & \cellcolor{red!25}$\mathbb{Z}$ &                                                 & \cellcolor{red!25}$\mathbb{Z}_2$ &  &              \\ \hline
             & \cellcolor{gray!75}           & \cellcolor{gray!75}           & \cellcolor{gray!75}           & \cellcolor{gray!75}             & \cellcolor{gray!75}           & \cellcolor{gray!75}           & \cellcolor{gray!75}           & \cellcolor{gray!75}             &          &                                                 & \cellcolor{red!25}$\mathbb{Z}$ & \cellcolor{red!25}$\mathbb{Z}$ &                                                   &  &              \\ \hline
             & \cellcolor{gray!75}           & \cellcolor{gray!75}           & \cellcolor{gray!75}           & \cellcolor{gray!75}             & \cellcolor{gray!75}           & \cellcolor{gray!75}           & \cellcolor{gray!75}           & \cellcolor{gray!75}             &          & \cellcolor{red!25}$\mathbb{Z}$ &                                                 &                                                 &                                                   &  &              \\ \hline
             $-6k+2m-9$& \cellcolor{gray!75}           & \cellcolor{gray!75}           & \cellcolor{gray!75}           & \cellcolor{gray!75}             & \cellcolor{gray!75}           & \cellcolor{gray!75}           & \cellcolor{gray!75}           &                                                   &          & \cellcolor{red!25}$\mathbb{Z}$ &                                                 &                                                 &                                                   &  &              \\ \hline
             $\vdots$& \cellcolor{gray!75}           & \cellcolor{gray!75}           & \cellcolor{gray!75}           & \cellcolor{gray!75}             & \cellcolor{gray!75}           & \cellcolor{gray!75}           &                                                 &                                                   & $\adots$ &                                                 &                                                 &                                                 &                                                   &  &              \\ \hline
            $-12k+2m+35$ & \cellcolor{gray!75}           & \cellcolor{gray!75}           & \cellcolor{gray!75}           & \cellcolor{gray!75}             & \cellcolor{gray!75}           &                                                 &                                                 & \cellcolor{red!25}$\mathbb{Z}$   &          &                                                 &                                                 &                                                 &                                                   &  &              \\ \hline
             & \cellcolor{gray!75}           & \cellcolor{gray!75}           & \cellcolor{gray!75}           & \cellcolor{gray!75}             &                                                 & \cellcolor{red!25}$\mathbb{Z}$ &                                                 & \cellcolor{red!25}$\mathbb{Z}_2$ &          &                                                 &                                                 &                                                 &                                                   &  &              \\ \hline
             & \cellcolor{gray!75}           & \cellcolor{gray!75}           & \cellcolor{gray!75}           &                                                   &                                                 & \cellcolor{red!25}$\mathbb{Z}$ & \cellcolor{red!25}$\mathbb{Z}$ &                                                   &          &                                                 &                                                 &                                                 &                                                   &  &              \\ \hline
             & \cellcolor{gray!75}           & \cellcolor{gray!75}           &                                                 & \cellcolor{red!25}$\mathbb{Z}$   & \cellcolor{red!25}$\mathbb{Z}$ &                                                 &                                                 &                                                   &          &                                                 &                                                 &                                                 &                                                   &  &              \\ \hline
             & \cellcolor{gray!75}           & \cellcolor{red!25}$\mathbb{Z}$ &                                                 & \cellcolor{red!25}$\mathbb{Z}_2$ & \cellcolor{red!25}$\mathbb{Z}$ &                                                 &                                                 &                                                   &          &                                                 &                                                 &                                                 &                                                   &  &              \\ \hline
             &                                                 & \cellcolor{red!25}$\mathbb{Z}$ & \cellcolor{red!25}$\mathbb{Z}$ &                                                   &                                                 &                                                 &                                                 &                                                   &          &                                                 &                                                 &                                                 &                                                   &  &              \\ \hline
             & \cellcolor{red!25}$\mathbb{Z}$ &                                                 &                                                 &                                                   &                                                 &                                                 &                                                 &                                                   &          &                                                 &                                                 &                                                 &                                                   &  &              \\ \hline
$-12k+2m+21$ & \cellcolor{red!25}$\mathbb{Z}$ &                                                 &                                                 &                                                   &                                                 &                                                 &                                                 &                                                   &          &                                                 &                                                 &                                                 &                                                   &  &              \\ \hline
\end{tabular}
};
\draw[line width=1pt, rounded corners, fill=white] (3.2,-3.5) rectangle (7.5,-0.7);

\node at (5.3,-0.95) {$k-4$ copies of };
\node at (5.3,-2.3) {

\begin{tabular}{cccc}
             &              &              & \cellcolor{red!25}$\mathbb{Z}$   \\
             & \cellcolor{red!25}$\mathbb{Z}$ &              & \cellcolor{red!25}$\mathbb{Z}_2$ \\
             & \cellcolor{red!25}$\mathbb{Z}$ & \cellcolor{red!25}$\mathbb{Z}$ &                \\
\cellcolor{red!25}$\mathbb{Z}$ &              &              &                \\
\cellcolor{red!25}$\mathbb{Z}$ &              &              &               
\end{tabular}

};

\end{tikzpicture}
\thisfloatpagestyle{plain}
\caption{Khovanov homology for closures of $(\sigma_1\sigma_2)^{3k}\sigma_1^{-2m}$ in indices $i\geq -4k+15$ and $j\leq-4k+2m+2i-5$.}
\end{subfigure}

     \caption{The integral Khovanov homology of closures of braids $(\sigma_1\sigma_2)^{3k}\sigma_1^{-2m}\in\Omega_4$, for $k\geq 4$ and $m\geq 8$. The torsion group $\Z/2\Z$ is denoted with $\Z_2$. Gluing together the three subtables (a), (b), (c) yields a complete description of the whole homology table. }
     \label{exampleofKhovanovhomologiesfigure}
 \end{figure}
\restoregeometry

\pagebreak
\printbibliography

\end{document}